\numberwithin{equation}{section}
\newtheorem{maintheorem}{Theorem}
\newtheorem{maincoro}[maintheorem]{Corollary}
\newtheorem{mainprop}[maintheorem]{Proposition}
\newtheorem{theorem}{Theorem}[section]
\newtheorem*{theorem*}{Theorem}
\newtheorem{lemma}[theorem]{Lemma}
\newtheorem{corollary}[theorem]{Corollary}
\theoremstyle{definition}{

\newtheorem{definition}[theorem]{Definition}
\newtheorem*{definition*}{Definition}

\newtheorem*{example*}{Example}
}
\theoremstyle{remark}{

\newtheorem*{remark*}{Remark}

}
\newcommand{\R}{\mathbb R}
\newcommand{\Z}{\mathbb Z}
\newcommand{\HS}{\mathbb H}
\newcommand{\deq}{\stackrel{\scriptscriptstyle\triangle}{=}}
\newcommand{\E}{\mathbb{E}}
\renewcommand{\P}{\mathbb{P}}
\DeclareMathOperator{\var}{Var}
\newcommand{\gap}{\text{\tt{gap}}}
\newcommand{\sob}{\alpha_{\text{\tt{s}}}}
\newcommand{\tmix}{t_\textsc{mix}}
\newcommand{\tv}{{\textsc{tv}}}
\newcommand{\Po}{\operatorname{Po}}
\newcommand{\given}{\, \big| \,}
\newcommand{\diam}{\operatorname{diam}}
\newcommand{\one}{\mathbbm{1}}
\renewcommand{\epsilon}{\varepsilon}
\renewcommand{\phi}{\varphi}
\newcommand{\cG}{\mathcal{G}}
\newcommand{\cN}{\mathcal{N}}
\DeclareMathOperator{\dist}{dist}
\newcommand{\block}{{\mathcal{T}}}
\newcommand{\sobinf}{\widehat{\alpha}_{\text{\tt{s}}}}
\newcommand{\ltwo}{{\mathfrak{M}}}
\newcommand{\lltwo}{{\mathfrak{m}}}
\newcommand{\sparse}{\mathcal{S}}
\newcommand{\tX}{\tilde{X}}
\newcommand{\oned}{1\textsc{d} }
\newcommand{\twod}{2\textsc{d} }
\newcommand{\sX}{\mathscr{X}}
\newcommand{\sY}{\mathscr{Y}}
\newcommand{\spinset}{\Sigma}
\newcommand{\Erf}{\operatorname{Erf}}
\date{}
\begin{document}
\title[Cutoff for general spin systems]{Cutoff for general spin systems with arbitrary boundary conditions}

\author{Eyal Lubetzky}
\address{Eyal Lubetzky\hfill\break
Microsoft Research\\
One Microsoft Way\\
Redmond, WA 98052-6399, USA.}
\email{eyal@microsoft.com}
\urladdr{}

\author{Allan Sly}
\address{Allan Sly\hfill\break
Department of Statistics\\
UC Berkeley\\
Berkeley, CA 94720, USA.}
\email{sly@stat.berkeley.edu}
\urladdr{}

\begin{abstract}
The cutoff phenomenon describes a sharp transition in the convergence of a Markov chain to equilibrium.
In recent work, the authors established cutoff and its location for the stochastic Ising model on the $d$-dimensional torus $(\mathbb{Z}/n\mathbb{Z})^d$ for any $d\geq 1$.
The proof used the symmetric structure of the torus and monotonicity in an essential way.

Here we enhance the framework and extend it to general geometries, boundary conditions and external fields to derive a cutoff criterion
that involves the growth rate of balls and the log-Sobolev constant of the Glauber dynamics.
In particular, we show there is cutoff for stochastic Ising on any sequence of bounded-degree graphs with sub-exponential growth under arbitrary external fields provided the inverse log-Sobolev constant is bounded.
For lattices with homogenous boundary, such as all-plus, we identify the cutoff location explicitly in terms of spectral gaps of infinite-volume dynamics on half-plane intersections.
Analogous results establishing cutoff are obtained for non-monotone spin-systems at high temperatures,
including the gas hard-core model, the Potts model, the anti-ferromagnetic Potts model and the coloring model.
\end{abstract}

\maketitle


\section{Introduction}

The total-variation \emph{cutoff phenomenon} describes a sharp transition in the $L^1$-distance of a finite Markov chain from equilibrium, dropping abruptly from near its maximum to near $0$ (see formal definitions in~\S\ref{sec:prelim-cutoff}). Since its discovery in the early 80's by Aldous and Diaconis (\cites{AD} following~\cites{Aldous,DiSh}) the cutoff phenomenon has been believed to be widespread in Markov chains. Until very recently, however, rigorous proofs of cutoff were confined to very few cases where the stationary measure was well-understood and fairly simple, e.g.\ uniform, 1-dimensional unimodal etc. The focus of this work is on cutoff for Glauber dynamics for spin-systems such as the Ising model, 
chains which are natural models for the evolution of these systems in addition to being their most practiced sampling methods.
Despite extensive study, the understanding of the stationary measure in these models remains limited. For instance, for the Ising model on $\mathbb{Z}^3$ the value of the critical temperature is unknown, as is the basic question of whether spin-spin correlations at criticality decay with distance.

Peres conjectured in 2004 that Glauber dynamics for the Ising model on any sequence of bounded-degree transitive graphs would exhibit cutoff at high temperatures, yet even for the \oned torus (Ising on a cycle) this was open until recently. In the companion paper~\cite{LS1} the authors proved cutoff and established its location for Glauber dynamics for the Ising model on the $d$-dimensional torus $(\mathbb{Z}/n\mathbb{Z})^d$ for any $d\geq 1$ at any temperature where the stationary measure satisfies the \emph{strong spatial mixing} condition which requires that the effect of changes in boundary conditions decay exponentially in distance  (this holds on $\Z$ at all temperatures and on $\Z^2$ all the way up to the critical temperature; see \S\ref{sec:prelim} for definitions and background.) The proof featured a framework for eliminating the dependencies between distant small boxes, with which the $L^1$-mixing of the whole system could be reduced to $L^2$-mixing of its projections onto these boxes. The symmetry of the torus, giving any such box the same effect on mixing,  played a key role in the analysis
(e.g., the basic setting of a box in $\mathbb{Z}^d$ with free boundary conditions was not covered), as did the monotonicity of the Ising model.

In this work we enhance the above framework to forsake the symmetry and monotonicity limitations. Postponing standard definitions to~\S\ref{sec:prelim}, we next describe the new results, first for the Ising model and thereafter for non-monotone systems such as the gas hard-core model and the Potts model.

{\setlength{\parskip}{-0.05cm}

\subsection{Results}
Theorem~\ref{mainthm-arbitrary} below formulates a cutoff criterion for stochastic Ising on general graphs with arbitrary boundary conditions, interaction strengths and external fields (possibly non-uniform). The criterion factors in the growth rate of balls of logarithmic radius and the log-Sobolev constants of the induced system on subgraphs of poly-logarithmic diameter. As one application we obtain that on bounded-degree graphs where these log-Sobolev constants are bounded away from 0 (e.g., lattices in the strong spatial mixing regime) there is cutoff as long as the graphs have \emph{sub-exponential growth}.

\begin{maintheorem}\label{mainthm-arbitrary}
Let $G$ be a connected graph on $n$ vertices with maximal degree $\Delta \geq 2$, and
consider Glauber dynamics for the {\sf ferromagnetic Ising model} on $G$ with arbitrary (possibly non-uniform) interactions and external fields.
Let
\begin{align*}
\begin{array}{rcl}
\sobinf &=& \min\left\{ \sob(H) \;:\; H\subset G \mbox{ with }\diam(H) \leq \log^2n \right\} \,,\\
\noalign{\smallskip}
\rho &=& \max\left\{ \left|B_v\left(\lfloor 10 \Delta \sobinf^{-1}\log n\rfloor\right)\right| \;:\; v\in V(G)\right\} \,,
\end{array}
\end{align*}
where $\sob(H)$ is the log-Sobolev constant of the dynamics on $H$ with free boundary and the original interactions and external field, and $B_v(r)$ is the subgraph induced on the ball of radius $r$ around the vertex $v$.
Then
 \[\tmix(\epsilon)-\tmix(1-\epsilon)\leq 16 \Delta \sobinf^{-2}\log\rho \]
 for any $\epsilon>0$ and large enough $n$.
 In particular, for any sequence $G_n$ with $\Delta\sobinf^{-1} = O(1)$ and $\rho = n^{o(1)}$ the dynamics has cutoff,
 as $\tmix(1/e) \gtrsim \log n$.
\end{maintheorem}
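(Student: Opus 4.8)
\smallskip\noindent\emph{Proof strategy.}
The statement splits into the window bound $\tmix(\epsilon)-\tmix(1-\epsilon)\le 16\Delta\sobinf^{-2}\log\rho$, which is the substance, and the lower bound $\tmix(1/e)\gtrsim\log n$, which only serves to certify that the window is $o(\tmix)$ once $\Delta\sobinf^{-1}=O(1)$ and $\rho=n^{o(1)}$. The plan for the window is to enhance the reduction of~\cite{LS1} — from $L^1$-mixing of the whole chain to $L^2$-mixing of its projections onto small blocks — so as to forgo the torus symmetry. Write $r:=\floor{10\Delta\sobinf^{-1}\log n}$, so that $\rho=\max_v|B_v(r)|$; provided $\Delta\sobinf^{-1}\le\tfrac1{20}\log n$ (which holds for large $n$ whenever $\Delta\sobinf^{-1}=O(1)$) one has $2r\le\log^2n$, hence every ball $B_v(r)$, and indeed every subgraph of diameter $O(r)$, has log-Sobolev constant $\ge\sobinf$. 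I would run Glauber dynamics $X_t$ from the all-plus state together with a stationary copy $Y_t$ under the monotone grand coupling (available since the model is ferromagnetic; the general framework is designed to work without it) and track the disagreement set $\mathcal D_t$. The plan is to exhibit a burn-in time $t_\star$ with: \emph{(i)} by $t_\star$, $\whp$, every component of $\mathcal D_{t_\star}$ has diameter $<r$; \emph{(ii)} a further $8\Delta\sobinf^{-2}\log\rho$ time units then eliminate $\mathcal D$ entirely, $\whp$; and \emph{(iii)} at $t_\star-8\Delta\sobinf^{-2}\log\rho$ the chain is still unmixed, $d(t)\ge 1-\epsilon$. Items \emph{(i)}--\emph{(iii)} give $\tmix(\epsilon)\le t_\star+8\Delta\sobinf^{-2}\log\rho$ and $\tmix(1-\epsilon)\ge t_\star-8\Delta\sobinf^{-2}\log\rho$, hence the window bound.

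Step \emph{(ii)} is the cleanest. Once every component of $\mathcal D_{t_\star}$ has diameter $<r$, such a component sits in a ball $B_v(r)$ on whose complement $X$ and $Y$ agree; and since $8\Delta\sobinf^{-2}\log\rho\ll r/\Delta$ — the time a disagreement needs to cross $B_v(r)\setminus B_v(r/2)$ at the $O(\Delta)$ per-step propagation speed — throughout the cleanup window the component stays inside $B_v(r)$ and evolves independently of the rest of the graph. A censoring/monotone-coupling comparison then reduces matters to a finite-volume dynamics on $B_v(r)$, whose $L^2$-mixing time is $\tfrac14\sobinf^{-1}\log\log(1/\pi_{\min})+O(\sobinf^{-1})=O(\sobinf^{-1}\log\rho)$ by the log-Sobolev inequality and hypercontractivity, as $\log(1/\pi_{\min})=O(|B_v(r)|)=O(\rho)$; pushing the $L^2$ error to $n^{-5}$ costs only $O(\sobinf^{-1})$ more, and a union bound over the $\le n$ components closes the step, the extra factor $\Delta$ absorbing propagation-speed and bookkeeping losses.

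For step \emph{(iii)} I would use that $t_\star$ is the threshold at which $\mathcal D_t$ ceases to be macroscopically dense: for $t<t_\star-8\Delta\sobinf^{-2}\log\rho$ there is, $\whp$, a set $\mathcal W$ of vertices pairwise at distance $>2r$, of size $\Omega(n/\rho)$, with $\P(u\in\mathcal D_t)\ge c>0$ for each $u\in\mathcal W$. By finite speed of propagation the spins $\{X_t(u)\}_{u\in\mathcal W}$ are mutually independent — each depends only on the clocks and initial spins in its own ball $B_u(O(\Delta t))$, and these balls are disjoint — and likewise for $\{Y_t(u)\}$, where the log-Sobolev hypothesis supplies the exponential decay of correlations in $\pi$ needed to take the stationary copy with a near-product law on $\mathcal W$. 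Starting $X$ from all-plus and invoking monotonicity, $S:=\sum_{u\in\mathcal W}\sigma_u$ then satisfies $\E S(X_t)-\E_\pi S=\Omega(n/\rho)$ with fluctuations $O(\sqrt{n/\rho})$, so Chebyshev and $n/\rho\to\infty$ give $d(t)\ge 1-\epsilon$. The same statistic at $t=\tfrac12\log_\Delta n$, or simply a diameter/propagation-speed comparison, furnishes the separate bound $\tmix(1/e)\gtrsim\log_\Delta n$, which is $\gtrsim\log n$ when $\Delta=O(1)$ (with the implied constant allowed to depend on $\Delta$ and the interaction strengths).

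The hard part is step \emph{(i)}, the burn-in: from an arbitrary start one must drive $\mathcal D_t$ down to components of diameter $<r$ by a time $t_\star$ that exceeds the (unknown) mixing time by at most the cleanup length, and without the torus symmetry one can no longer regard all windows of a given size as interchangeable. I would run a multi-scale analysis, bounding recursively over dyadic scales $2^k\le r$ the probability that $\mathcal D_t$ has a component of diameter in $[2^k,2^{k+1}]$: on any window $W$ of diameter $\le\log^2n$ the induced dynamics has log-Sobolev constant $\ge\sobinf$ and hence erodes in $L^2$ any disagreement confined to $W$ within $O(\sobinf^{-1}\log|W|)$ time — faster than fresh disagreements can be injected across $\partial W$ — and this, together with censoring and block-dynamics comparisons, should propagate the estimate from one scale to the next. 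The scale $r=\floor{10\Delta\sobinf^{-1}\log n}$ is precisely where the recursion forces the probability of a surviving component below $n^{-5}$, so that a union bound over the $\le n$ possible locations finishes; the factor $10\Delta\sobinf^{-1}$ is what turns $\log n$ worth of burn-in time, at propagation speed $O(\Delta)$, into the decoupling length that steps \emph{(ii)}--\emph{(iii)} require. I expect this recursion — controlling the interplay between erosion on a window and infiltration from its boundary in the absence of any symmetry — to be the principal technical obstacle.
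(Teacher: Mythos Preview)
Your proposal has a genuine gap in step~(i), and the missing idea is precisely the paper's central innovation.

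You track the disagreement set $\mathcal D_t$ from time $0$ and try to identify a threshold $t_\star$ at which it first becomes ``sparse'' (components of diameter $<r$). The problem is that $t_\star$ is never actually defined: steps~(i) and~(iii) impose two different implicit definitions (first time components are small, vs.\ last time many well-separated vertices still disagree with constant probability) and nothing in your outline shows these differ by only $O(\Delta\sobinf^{-2}\log\rho)$. Your proposed multi-scale recursion for step~(i) would have to control the full evolution of $\mathcal D_t$ over the entire (unknown) mixing time, tracking how components merge, split, and erode against inhomogeneous boundary infiltration. This is exactly the obstacle you flag as ``principal,'' and you give no mechanism for overcoming it.

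The paper sidesteps this entirely. Rather than following $\mathcal D_t$ from time $0$, it views the dynamics on the interval $(t,t+s)$ as a random map $\mathcal G_s$ and looks at its \emph{update support} $\Lambda_{W_s}$: the minimal set of spins at time $t$ that determine the output at time $t+s$. The key point is that for $s\asymp \Delta\sobinf^{-2}\log\rho$ --- a time of order the \emph{window}, not the mixing time --- this support is already sparse with high probability, regardless of the configuration at time $t$. A general random-mapping lemma (Lemma~\ref{lem-support}) then gives
\[
\|\P_{\sigma_0}(X_{t+s}\in\cdot)-\mu\|_\tv \le \int \|\P_{\sigma_0}(X_t(\Lambda_{W_s})\in\cdot)-\mu|_{\Lambda_{W_s}}\|_\tv\, d\P(W_s) + o(1),
\]
so one never has to analyze the long-time structure of $\mathcal D_t$. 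The threshold $t^\star$ is then defined concretely as the time when the $L^2$ quantity $\ltwo_t(\Lambda,\sigma_0)=\sum_i\|\P_{\sigma_0}(X_t^i(A_i)\in\cdot)-\pi_i|_{A_i}\|_{L^2}^2$, maximized over sparse $\Lambda$ and $\sigma_0$, drops to $\rho$. Both bounds are proved relative to this single $t^\star$ via Proposition~\ref{prop-l1-l2}: the lower bound by exhibiting the maximizing $\Lambda$, and the upper bound by a greedy construction of a reference sparse set $\widetilde\Lambda$ that dominates the random $\Lambda_{W_s}$ componentwise, so that $\int \ltwo_{t^\star}(\Lambda)\,d\nu(\Lambda)=o(1)$. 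This greedy comparison is what handles the inhomogeneity you worry about, and it has no analogue in your outline.

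Your step~(iii) via a sum-of-spins statistic is also too coarse: it would show $d(t)\to 1$ at some time, but matching it to within $O(\Delta\sobinf^{-2}\log\rho)$ of the upper bound requires the $L^2$ machinery. The paper's lower bound uses the same $\ltwo_t$ that governs the upper bound, which is why the two meet.
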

}

The above theorem allowed for an arbitrary (possibly non-uniform) external field, which in particular encompasses any arbitrary boundary condition. As mentioned above, it is known (due to~\cite{MO}) that the log-Sobolev constant of stochastic Ising on a finite box in $\mathbb{Z}^d$ is bounded away from 0 whenever there is strong spatial mixing. The growth rate parameter $\rho$ defined in Theorem~\ref{mainthm-arbitrary} there satisfies $\rho \asymp \log n $, whence we deduce the following:

\begin{maincoro}\label{maincor-bc}
Let $d\geq 1$ and consider Glauber dynamics for the {\sf Ising model} on a box $\Lambda \subset \Z^d$ of side-length $n$ under
arbitrary boundary conditions, local interactions and external fields. Then throughout the regime of strong spatial mixing the dynamics exhibits cutoff
with a window of $O(\log\log n)$.
\end{maincoro}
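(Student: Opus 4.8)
The plan is to derive Corollary~\ref{maincor-bc} as an essentially immediate consequence of Theorem~\ref{mainthm-arbitrary}, applied with $G=\Lambda$ (the given box), after absorbing the prescribed boundary condition into an external field supported near $\partial\Lambda$; since the interactions are local, $\Lambda$ carries its lattice graph structure with $\Delta=2d=O(1)$. It then suffices to verify the two quantitative hypotheses of that theorem, namely $\Delta\sobinf^{-1}=O(1)$ and $\rho=n^{o(1)}$, and to track the resulting bound on the window.

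For the log-Sobolev input I would invoke the result of Martinelli and Olivieri~\cite{MO}: throughout the strong spatial mixing regime the log-Sobolev constant $\sob(H)$ of Glauber dynamics for the Ising model on a finite subset $H$ of $\Z^d$ --- with free boundary and with the original (local) interactions and external field restricted to $H$ --- is bounded below by a constant $c=c(d)>0$, uniformly over all such $H$ (and over all boundary fields). In particular this applies to every $H\subset\Lambda$ with $\diam(H)\le\log^2 n$, so $\sobinf\ge c$ and hence $\Delta\sobinf^{-1}\le 2d/c=O(1)$, as required.

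Given this, the radius appearing in the definition of $\rho$ is $r=\lfloor 10\Delta\sobinf^{-1}\log n\rfloor=O(\log n)$, so any ball of radius $r$ in $\Z^d$ has at most $(2r+1)^d$ vertices; thus $\rho=O(\log^d n)=n^{o(1)}$ and $\log\rho=O(\log\log n)$. Plugging into Theorem~\ref{mainthm-arbitrary} yields
\[
\tmix(\epsilon)-\tmix(1-\epsilon)\;\le\;16\,\Delta\,\sobinf^{-2}\log\rho\;=\;O(\log\log n)
\]
for every fixed $\epsilon>0$ and all large $n$, while the same theorem records the matching lower bound $\tmix(1/e)\gtrsim\log n$ valid on any bounded-degree graph. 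Since $\log\log n=o(\log n)$, these combine to give cutoff with a window of order $\log\log n$.

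The only step that is not a direct substitution is the uniform log-Sobolev bound: one must ensure that ``strong spatial mixing'' is taken in a form strong enough (holding for all finite subsets, i.e.\ the stronger complete analyticity condition, which in $d=2$ is known up to criticality) to feed into~\cite{MO} for the family of irregular shapes that can arise as diameter-$\le\log^2 n$ subgraphs of $\Lambda$, rather than only for rectangles. Granting that, the corollary follows with no further work.
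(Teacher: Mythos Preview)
Your proof is correct and follows essentially the same route as the paper: the corollary is deduced directly from Theorem~\ref{mainthm-arbitrary} together with the Martinelli--Olivieri log-Sobolev bound and the polylogarithmic ball-size estimate in $\Z^d$, yielding $\Delta\sobinf^{-1}=O(1)$, $\log\rho=O(\log\log n)$, and hence a window of $O(\log\log n)$. The caveat you raise about needing the log-Sobolev bound on arbitrary diameter-$\le\log^2 n$ subgraphs (rather than only on rectangles) is a fair technical point that the paper does not make explicit in deriving the corollary; in \S\ref{sec:cutoff-locations}, where this matters for the analysis, the paper sidesteps it by replacing graph-metric balls with $L^\infty$-balls, which are rectangles and hence covered directly by Theorem~\ref{thm-log-sobolev-torus}.
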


For instance, the above corollary implies that on $\mathbb{Z}^2$ with free boundary conditions
the dynamics exhibits cutoff for any $\beta < \beta_c =\frac12\log(1+\sqrt{2})$.

The proofs of the aforementioned results are not specific to the Ising model but rather can be applied to Glauber dynamics for any spin system model at high enough temperature, notably including non-monotone systems such as the Potts model. This is formalized in the generic Theorem~\ref{t:general} (see \S\ref{sec:general}), from which we can derive the following corollaries:
\begin{maintheorem}\label{mainthm-potts-Zd}
Let $d\geq 1$ and consider the {\sf Potts model} on a box $\Lambda \subset \Z^d$ of side-length $n$
with $q \geq 2$ colors, inverse-temperature $\beta \geq 0$ satisfying $\beta < \frac12 \log\big(\max\big\{1+\frac{1}{2d-1},(1+\frac{q}{2d})^{1/2d}\big\}\big)$ and arbitrary boundary conditions. Then the Glauber dynamics exhibits cutoff with a window of $O(\log\log n)$.

Furthermore, the analogous statement also holds for Glauber dynamics for the {\sf anti-ferromagnetic Potts model} as long as
$ 0 \geq \beta \geq -\frac1{2}\log(1+\frac{1}{2dq-1})$.
\end{maintheorem}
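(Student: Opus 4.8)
The plan is to derive Theorem~\ref{mainthm-potts-Zd} from the generic Theorem~\ref{t:general} by checking its hypotheses for the $q$-state Potts model (ferromagnetic and anti-ferromagnetic) in the stated temperature ranges. As in Theorem~\ref{mainthm-arbitrary}, these hypotheses amount to two things: (i) a lower bound $\sobinf \geq c > 0$ on the log-Sobolev constant of the induced Glauber dynamics on every subgraph of $\Lambda$ of diameter at most $\log^2 n$, uniform in the volume and in the boundary condition; and (ii) sub-exponential growth of balls. Point (ii) is automatic on $\Z^d$, since a ball of radius $r$ inside $\Lambda$ contains at most $(2r+1)^d$ vertices; taking $r = O(\sobinf^{-1}\log n) = O(\log n)$ gives $\rho = \log^{O(1)} n = n^{o(1)}$, hence $\log\rho = O(\log\log n)$ --- this is the cutoff window. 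Once (i) holds, Theorem~\ref{t:general} yields $\tmix(\epsilon) - \tmix(1-\epsilon) = O(\log\rho) = O(\log\log n)$, and combined with the matching lower bound $\tmix(1/e) \gtrsim \log n$ (a coupon-collector bound on the box of side $n$, as in the general theorem) this gives cutoff with window $O(\log\log n)$.

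All the real content is therefore in establishing (i), which I would do in two steps. First, show that the stated bounds on $\beta$ imply \emph{strong spatial mixing} for the Potts Gibbs measure on $\Z^d$, with the uniformity in region and boundary condition needed downstream. The two expressions inside $\max\{\cdot\}$ correspond to two distinct sufficient criteria, and one keeps whichever permits the larger $|\beta|$: the bound $e^{2|\beta|} < 1 + \tfrac{1}{2d-1}$ amounts to the Fortuin--Kasteleyn edge-probability $1 - e^{-2|\beta|}$ being below $\tfrac1{2d}$, hence below $\tfrac{1}{2d-1}\le p_c(\Z^d)$, so a disagreement-percolation coupling is subcritical and decays exponentially uniformly in the boundary condition; the bound $e^{2|\beta|} < (1 + \tfrac{q}{2d})^{1/2d}$, i.e.\ $2d(e^{4d|\beta|}-1) < q$, is a ``local uniformity'' condition that is superior when $q$ is large relative to $d$ and again forces a contracting single-site coupling. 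In the anti-ferromagnetic case the analogous bound ensures the corresponding disagreement parameter is below $\tfrac{1}{2dq}$, the extra factor $q$ reflecting that $q$ colors make the repulsive constraint easy to satisfy.

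Second, I would invoke the known implication that strong spatial mixing, uniform in region and boundary condition (equivalently, the Dobrushin--Shlosman complete-analyticity condition), entails a volume- and boundary-condition-independent positive lower bound on the log-Sobolev constant of the corresponding Glauber dynamics. For monotone systems this is due to Martinelli--Olivieri; crucially, for general finite-spin systems such as the (anti-)ferromagnetic Potts model the same conclusion holds via the Stroock--Zegarlinski / Cesi machinery, which does not use monotonicity. This supplies the constant $c$ in (i), and feeding it into Theorem~\ref{t:general} finishes the proof.

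The main obstacle is precisely this monotonicity-free passage from high temperature to a uniform log-Sobolev bound: unlike the Ising model of Theorem~\ref{mainthm-arbitrary}, the anti-ferromagnetic Potts model has no FKG/monotone-coupling structure, so one cannot shortcut through monotone comparison arguments and must instead rely on the general-spin-system ``spatial mixing $\Rightarrow$ log-Sobolev'' results, and, upstream, on verifying that the explicit $\beta$-thresholds really do deliver strong spatial mixing with the full uniformity in boundary conditions that Theorem~\ref{t:general} requires. Everything after that --- tracking the constants and reading off the $O(\log\log n)$ window --- is routine.
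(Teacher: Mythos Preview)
There is a genuine gap: you have misidentified both what Theorem~\ref{t:general} requires and where monotonicity is actually used in the framework. The hypothesis of Theorem~\ref{t:general} is \emph{not} a log-Sobolev bound plus sub-exponential growth (that is Theorem~\ref{mainthm-arbitrary}, which is specific to the ferromagnetic Ising model); it is the existence of a Markovian \emph{grand coupling} of the dynamics from all initial configurations simultaneously, under which the single-site disagreement probability decays exponentially in time (condition~\eqref{e:exponentialCoupling}). The paper's derivation of Theorem~\ref{mainthm-potts-Zd} accordingly consists of constructing such a grand coupling for Potts --- via a parameter $\zeta_\mu$ measuring the minimum oblivious-update probability, combined with a contact-process domination (Lemma~\ref{l:contact}), and alternatively via Huber's bounding-chain couplings for the broader threshold --- and then invoking Theorem~\ref{t:general}. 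Your two temperature thresholds correspond exactly to these two coupling constructions, not to two routes to strong spatial mixing.

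The reason this matters is that the place where monotonicity is essential in the Ising argument is \emph{not} the implication ``strong spatial mixing $\Rightarrow$ log-Sobolev'' (which, as you correctly note, has monotonicity-free proofs). It is in the update-support analysis of Lemma~\ref{lem-sparse-prob}: for Ising, the event $U_v$ that the barrier-dynamics output at $v$ depends on the input is controlled by the event that the all-plus and all-minus chains disagree at $v$, and this step uses the monotone sandwich. For Potts there is no such sandwich, so one must instead bound the probability that \emph{some} pair of initial configurations disagree at $v$ --- precisely the quantity controlled by~\eqref{e:exponentialCoupling}. Your proposal never produces this coupling, so even with a uniform log-Sobolev constant in hand you have no way to show that the update support becomes sparse, and the argument stalls there. (For the record, in the paper the log-Sobolev bound is obtained \emph{from} the grand coupling: exponential coalescence gives $O(\log n)$ mixing, hence strong spatial mixing via~\cite{DSVW}, hence uniform log-Sobolev via~\cite{MO2} --- the reverse of your proposed logical order.)
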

\begin{maintheorem}\label{mainthm-hardcore-Zd}
Let $d\geq 1$ and consider the {\sf gas hard-core model} on a box $\Lambda \subset\Z^d$ of side-length $n$ with fugacity $\lambda < \frac1{2d-2}(1+\frac1{2d-2})^{2d-1}$ and arbitrary boundary conditions. The heat-bath dynamics has cutoff with window $O(\log\log n)$.
\end{maintheorem}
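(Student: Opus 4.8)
The plan is to obtain Theorem~\ref{mainthm-hardcore-Zd} as an instance of the generic criterion of Theorem~\ref{t:general}, in the same way that Theorem~\ref{mainthm-potts-Zd} is derived from it. That criterion reduces the desired cutoff statement, for heat-bath dynamics of the hard-core model on a box $\Lambda\subset\Z^d$ of side-length $n$ under an arbitrary boundary condition, to checking two inputs: (i) \emph{sub-exponential growth}, i.e.\ that the ball-growth parameter entering the criterion is $n^{o(1)}$; and (ii) a lower bound, uniform in $n$, in the boundary condition, and in the subgraph, on the log-Sobolev constant $\sob(H)$ of the free-boundary hard-core dynamics (at the same fugacity $\lambda$) on every induced subgraph $H$ of diameter at most $\polylog(n)$. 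Input (i) is automatic on $\Z^d$: a ball of radius $r$ contains at most $(2r+1)^d$ vertices, and the radius occurring in the criterion is $O(\log n)$ once (ii) supplies a constant lower bound on the log-Sobolev constants, so the growth parameter is at most $\polylog(n)$; this also forces the cutoff window to be $O(\log\log n)$, since the window is bounded by a constant times the logarithm of that parameter.

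The heart of the matter is input (ii). Observe that the stated fugacity bound is exactly the hard-core uniqueness threshold on the infinite $(2d)$-regular tree:
\[
\lambda < \frac{1}{2d-2}\Big(1+\frac{1}{2d-2}\Big)^{2d-1} = \frac{(2d-1)^{2d-1}}{(2d-2)^{2d}} = \lambda_c\big(\mathbb{T}_{2d}\big).
\]
By Weitz's self-avoiding-walk-tree argument, strong spatial mixing for the hard-core model holds throughout the regime $\lambda<\lambda_c(\mathbb{T}_{2d})$ on every graph of maximum degree at most $2d$, with rate constants depending only on $d$ and $\lambda$. Every subgraph $H$ relevant to the criterion is an induced subgraph of $\Z^d$ --- possibly with some vertices deleted, since an ``occupied'' boundary vertex simply forces its neighbours to be empty, reducing to free-boundary hard-core on a smaller induced subgraph --- and hence has maximum degree at most $2d$, so strong spatial mixing applies to all of them uniformly. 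Finally, one invokes the standard implication that strong spatial mixing (equivalently, the mixing condition of Dobrushin--Shlosman type) yields a log-Sobolev constant bounded away from $0$, uniformly over boxes and boundary conditions; the recursive block-dynamics argument behind this implication goes through for the hard-core constraint because the single-site heat-bath chain remains irreducible on the constrained configuration space (particles can always be removed) and the finite-volume measures are uniformly bounded below on their supports. Combining (i) and (ii), Theorem~\ref{t:general} delivers $\tmix(\epsilon)-\tmix(1-\epsilon) = O(\log\log n)$ for every fixed $\epsilon\in(0,1)$, while $\tmix(1/e)\gtrsim\log n$ follows from the standard lower bound; together these give cutoff with an $O(\log\log n)$ window.

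The main obstacle is precisely the last step of (ii): passing from Weitz's decay-of-correlations estimate to the \emph{functional} inequality needed here, with a log-Sobolev constant independent of the system size. The mixing-condition $\Rightarrow$ log-Sobolev machinery was developed chiefly for the Ising model, so one must verify that it is insensitive to the presence of forbidden configurations; the features one needs --- irreducibility of the heat-bath chain on the hard-core configuration space and a uniform lower bound on the conditional occupation/vacancy probabilities, both valid below the tree threshold --- are mild, but checking them (or citing a version of the argument already carried out for hard-core) is the one place where model-specific work is required. Everything else is a black-box application of Theorem~\ref{t:general}, with no extra difficulty arising from the heat-bath (rather than Metropolis) choice of update rule.
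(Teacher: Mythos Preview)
Your proposal misreads the hypothesis of Theorem~\ref{t:general}. That theorem does \emph{not} take as input sub-exponential growth plus a log-Sobolev bound; its sole hypothesis is the existence of a Markovian grand coupling satisfying the exponential-decay condition~\eqref{e:exponentialCoupling}. (Inside its proof the log-Sobolev bound is \emph{derived} from that coupling via fast mixing $\Rightarrow$ strong spatial mixing $\Rightarrow$ bounded $\sob$, citing~\cites{DSVW,MO2}.) For the hard-core model the paper constructs such a grand coupling only in the range $\lambda<2/(\Delta-2)=1/(d-1)$, via the Luby--Vigoda type analysis; Weitz's correlation-decay theorem does not by itself furnish a grand coupling with exponentially decaying disagreements, so your route through Theorem~\ref{t:general} would yield cutoff only for $\lambda<1/(d-1)$, not for the full tree-threshold range stated in Theorem~\ref{mainthm-hardcore-Zd}.

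The paper reaches the tree threshold by a different mechanism that you do not mention: it exploits that $\Z^d$ is bipartite. The standard reordering of the partial order (flip the order on one colour class) turns the hard-core system into a \emph{monotone} one, at which point the entire framework of \S\ref{sec:arbitrary} (the proof behind Theorem~\ref{mainthm-arbitrary}, which uses monotonicity in an essential way for the update-support analysis) applies verbatim. Only then does Weitz's result enter, supplying strong spatial mixing --- and hence, via~\cite{MO2}, the uniform log-Sobolev bound --- for all $\lambda<\frac{(\Delta-1)^{\Delta-1}}{(\Delta-2)^\Delta}$. So your identification of Weitz and of the SSM~$\Rightarrow$~log-Sobolev step as the relevant inputs is correct, but they feed the monotone framework of \S\ref{sec:arbitrary}, not Theorem~\ref{t:general}, and the bipartite-to-monotone reduction is the missing idea that makes this possible.
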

\begin{maintheorem}\label{mainthm-coloring-Zd}
Let $d\geq 1$ and consider the {\sf proper coloring model} on a box $\Lambda \subset \Z^d$ of side-length $n$
with $q \geq 4d(d+1)$ colors and arbitrary boundary conditions. Then the heat-bath dynamics has cutoff with window $O(\log\log n)$.
\end{maintheorem}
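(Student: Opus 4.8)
The plan is to derive Theorem~\ref{mainthm-coloring-Zd} as a direct application of the generic criterion, Theorem~\ref{t:general}, by verifying that proper $q$-colorings of a box $\Lambda\subset\Z^d$ with $q\ge 4d(d+1)$ satisfy its two structural hypotheses: (i) the heat-bath dynamics on every subgraph $H$ of poly-logarithmic diameter, under any list-boundary condition inherited from the ambient coloring, has log-Sobolev constant bounded away from $0$; and (ii) balls of radius $O(\log n)$ have sub-exponential size. Condition (ii) is immediate and quantitative in $\Z^d$: since $|B_v(r)|\le (2r+1)^d$, taking $r=\lfloor C\log n\rfloor$ gives a growth parameter $\rho\asymp(\log n)^d$, so that $\log\rho=O(\log\log n)$ — and this is precisely the window the theorem will output.

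The substantive input is (i). First I would record the elementary consequences of $q\ge 4d(d+1)>\Delta+1$ (with $\Delta=2d$): every list arising from a boundary coloring is feasible, the dynamics is irreducible, and the single-site heat-bath laws are never close to degenerate (each site always has at least $q-\Delta\ge q/2$ admissible colors). Then I would invoke the known strong spatial mixing estimates for proper colorings of $\Z^d$ in this range of $q$: the influence of a boundary change on the marginal at a site decays exponentially in distance, uniformly over the region and over boundary conditions. From strong spatial mixing one concludes a uniform positive lower bound on the log-Sobolev constant $\sob(H)$ for the heat-bath dynamics on any finite subgraph $H$ of $\Z^d$ with any admissible boundary condition. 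Since the coloring model is non-monotone one cannot quote~\cite{MO} verbatim; instead I would use the non-monotone analogue of the Martinelli--Olivieri equivalence between strong spatial mixing and the log-Sobolev inequality for finite-range systems, which uses only the uniform-in-boundary decay of correlations and not monotonicity or a spin-flip symmetry. This yields $\sobinf^{-1}=O(1)$ in the sense required by Theorem~\ref{t:general}, uniformly over the poly-log-diameter pieces it inspects.

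With (i) and (ii) in place, Theorem~\ref{t:general} gives $\tmix(\epsilon)-\tmix(1-\epsilon)=O(\log\rho)=O(\log\log n)$ for every fixed $\epsilon>0$. To upgrade this to cutoff it remains to note the matching lower bound on the location, $\tmix(1-\epsilon)\gtrsim\log n$: this follows from a coupon-collector / distinguishing-statistic argument exactly as in the Ising case — at time $c\log n$ with $c>0$ small, with probability bounded away from $0$ there is a vertex whose entire radius-$(c'\log n)$ neighborhood has not been updated, so the coloring there still carries macroscopic information about the initial configuration and the total-variation distance stays near $1$. Combining the two bounds exhibits cutoff at $\tmix\asymp\log n$ with window $O(\log\log n)$.

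I expect the main obstacle to be step (i): the companion paper~\cite{LS1} used monotonicity to pass from spatial mixing to dynamical mixing, so here one must instead establish (or invoke) the implication ``strong spatial mixing $\Rightarrow$ $\sob$ bounded below'' for a non-monotone system with arbitrary (list-)boundary conditions, and — the place where the numerical value is actually spent — verify that the threshold $q\ge 4d(d+1)$ genuinely places $\Z^d$-colorings in the strong-spatial-mixing regime with constants uniform in the boundary. A secondary, routine point is to confirm that Theorem~\ref{t:general} is framed to accept the heat-bath update rule used for colorings, which is just a matter of matching definitions.
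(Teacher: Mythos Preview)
You have misidentified the hypothesis of Theorem~\ref{t:general}. Its input is \emph{not} the pair ``(i) uniform log-Sobolev bound on poly-log subgraphs'' and ``(ii) sub-exponential ball growth''; those are the hypotheses of Theorem~\ref{mainthm-arbitrary}, which is stated and proved only for the (monotone) Ising model. The sole hypothesis of Theorem~\ref{t:general} is the existence of a Markovian \emph{grand coupling} of the dynamics from all initial states whose disagreement process decays exponentially, i.e.\ condition~\eqref{e:exponentialCoupling}. For proper colorings the paper obtains this via Huber's bounding-chain coupling: when a site is updated, draw a uniformly random permutation of the $q$ colors and assign the first color not appearing among its neighbors. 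Huber's analysis (\cite{Huber}*{Theorem~2}) shows that on a graph of maximum degree $\Delta=2d$ this grand coupling has exponentially decaying disagreement precisely when $q\ge\Delta^2+2\Delta=4d(d+1)$, and \emph{this} is where the numerical threshold enters. Once~\eqref{e:exponentialCoupling} holds, Theorem~\ref{t:general} gives cutoff with window $O(\log\log n)$ directly.

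The gap in your plan is not merely bibliographic. The proof of Theorem~\ref{mainthm-arbitrary} uses monotonicity in an essential way in Lemma~\ref{lem-sparse-prob}: to show the update support is sparse one bounds $\P(U_v)$ by sandwiching all chains between the all-plus and all-minus ones. A uniform log-Sobolev bound alone does not yield this for a non-monotone system; you need some device that controls \emph{all} starting states simultaneously, and that device is the grand coupling. In the paper the logical flow is actually the reverse of yours: the exponential grand coupling gives $O(\log n)$ mixing under any boundary, hence strong spatial mixing via~\cite{DSVW}, hence a uniform log-Sobolev bound via~\cite{MO2}; the log-Sobolev constant is a \emph{consequence}, not an input. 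Your proposed route---establish SSM for colorings independently, then invoke a non-monotone SSM$\Rightarrow$log-Sobolev implication, then feed this into a cutoff criterion---would still leave you without a substitute for monotonicity in the sparse-support step, and would not explain why the threshold is $4d(d+1)$ rather than some other SSM threshold.
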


\begin{remark*}
While the above results were stated for $\Z^d$, analogous results hold for any $d$-dimensional lattice $\mathbb{L}^d$ with finite periodicity, such as triangular, hexagonal etc., in particular including non-bipartite lattices
(in which one cannot use the standard monotone reordering procedure of an anti-monotone system).
Other notable examples for the lattice $\mathbb{L}^d$ include:
\begin{compactenum}[(i)]
  \item \label{it-long-range} a $d$-dimensional lattice (e.g.\ $\Z^d$) with long range interactions (edges between any two vertices with distance at most $l$ for some $l \geq 1$ fixed).
  \item \label{it-prod} a product of a $d$-dimensional lattice with any fixed graph $H$.
\end{compactenum}
Similarly, Theorems~\ref{mainthm-potts-Zd}--\ref{mainthm-coloring-Zd} were stated for the heat-bath dynamics yet the proofs hold also for Metropolis-Hastings with suitably modified constants.
\end{remark*}

In all the results listed thus far there was no closed form for the cutoff location.
The proofs specify this location as a threshold for the cumulative $L^2$-mixing on poly-logarithmic balls around each vertex, in which the effect of different subgraphs can greatly vary in general. When the boundary conditions are homogenous (e.g., free or all-plus) we can obtain an explicit formula for the cutoff location in terms of spectral gaps of the infinite-volume dynamics
on half-plane intersections. This is stated next for \twod lattices with all-plus boundary. (An analogous statement holds for free/minus boundary.)

\begin{maintheorem}\label{mainthm-plusBC2d}
Consider Glauber dynamics for the {\sf Ising model} on $\Lambda\subset \Z^2$, an $n\times n$ box with
all-plus boundary conditions. Throughout the high temperature regime $\beta < \beta_c = \frac12\log(1+\sqrt{2})$
cutoff occurs at $(\lambda_\infty \,\wedge\, 2\lambda_{\mathbbm{H}})^{-1}\log n$,
where $\lambda_\infty$ and $\lambda_{\mathbbm{H}}$ are the spectral gaps of the dynamics on the
infinite-volume lattice and the half-plane with all-plus boundary condition, respectively.
\end{maintheorem}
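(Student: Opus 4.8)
The plan is to read the cutoff location off the general framework and then evaluate it explicitly. Cutoff is already in hand: Corollary~\ref{maincor-bc} applies throughout $\beta<\beta_c$ with a window of $O(\log\log n)$. What the proofs of Theorems~\ref{mainthm-arbitrary} and~\ref{t:general} actually produce is that the cutoff point equals $(1+o(1))\,\theta_n$, where $\theta_n$ is the least time at which the cumulative $L^2$-type discrepancy of the projections of the all-plus--started dynamics onto the poly-logarithmic balls $B_v(r)$, $r=\Theta(\log n)$, drops below a fixed small constant; by~\cite{MO} the relevant log-Sobolev constants are bounded away from $0$ in the strong spatial mixing regime, so indeed $r\asymp\log n$ and the growth parameter of Theorem~\ref{mainthm-arbitrary} satisfies $\rho\asymp\log n$. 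To leading exponential order that discrepancy is $\sum_v \rho^{O(1)}e^{-2\lambda_v t}$, where $\lambda_v=\gap$ of the dynamics on $B_v(r)$ with the boundary conditions inherited from $\Lambda$, and the $\rho^{O(1)}$ prefactor only shifts $\theta_n$ by $O(\log\rho)=o(\log n)$; so the task reduces to evaluating $\sum_v e^{-2\lambda_v t}$.

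For the \emph{upper bound}, sort the vertices of $\Lambda$ by distance to $\partial\Lambda$. If $B_v(r)$ avoids $\partial\Lambda$, then $B_v(r)$ is a disk in $\Z^2$ with free boundary and $\lambda_v\ge\lambda_\infty$ by the monotone convergence $\lambda_\infty=\lim\gap(\text{free boxes})$; there are at most $n^2$ such $v$. If $B_v(r)$ meets $\partial\Lambda$ only along a straight portion of one edge (so $v$ is within distance $r$ of an edge but not of a corner), then $B_v(r)$ with its inherited boundary is a finite subregion of the half-plane $\mathbbm{H}$ carrying all-plus on $\partial\mathbbm{H}$ and free elsewhere, whence $\lambda_v\ge\lambda_{\mathbbm{H}}$ by the analogous monotone convergence for such exhaustions of $\mathbbm{H}$; there are at most $Cn\log n$ such $v$. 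The remaining $v$ lie within distance $r$ of one of the four corners --- at most $C\log^2 n$ of them --- and there $\lambda_v\ge c>0$ uniformly by the strong spatial mixing spectral gap bound of~\cite{MO}. Hence $\sum_v\rho^{O(1)}e^{-2\lambda_v t}\le n^{2+o(1)}e^{-2\lambda_\infty t}+n^{1+o(1)}e^{-2\lambda_{\mathbbm{H}}t}+n^{o(1)}e^{-2ct}\to 0$ once $t\ge(1+\epsilon)\max\{\lambda_\infty^{-1},(2\lambda_{\mathbbm{H}})^{-1}\}\log n=(1+\epsilon)(\lambda_\infty\wedge 2\lambda_{\mathbbm{H}})^{-1}\log n$, so $\theta_n\le(1+o(1))(\lambda_\infty\wedge 2\lambda_{\mathbbm{H}})^{-1}\log n$.

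For the matching \emph{lower bound} it suffices to separate the all-plus--started law $\mu_t^+$ from $\pi$ for $t\le(1-\epsilon)(\lambda_\infty\wedge 2\lambda_{\mathbbm{H}})^{-1}\log n$ using two statistics: the bulk magnetization $S_I=\sum_{v\in I}\sigma_v$ over $I=\{v:\dist(v,\partial\Lambda)\ge n/4\}$ and the edge magnetization $S_B=\sum_{v}\sigma_v$ over vertices at distance $1$ from a straight part of $\partial\Lambda$, for which $|I|\asymp n^2$ and $|S_B|\asymp n$. By strong spatial mixing and Griffiths' inequality, $\var_\pi S_I\asymp n^2$, $\var_\pi S_B\asymp n$, and $\E_\pi S_I=o(1)$. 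Coupling $\mu_t^+$ monotonically to the all-plus--started dynamics on a sub-ball $B_v(\sqrt n)$ (with free boundary, resp.\ all-plus on its straight edge and free elsewhere) and invoking finite speed of propagation, $\E_{\mu_t^+}\sigma_v-\E_\pi\sigma_v\ge(1-o(1))m_\infty(t)$ for $v\in I$ and $\ge(1-o(1))m_{\mathbbm{H}}(t)$ for $v$ on the edge layer, where $m_\infty(t),m_{\mathbbm{H}}(t)$ are the all-plus magnetization discrepancies of the infinite-volume dynamics on $\Z^2$ and on $\mathbbm{H}$; since these observables realise the respective spectral gaps, $m_\infty(t)=e^{-(\lambda_\infty+o(1))t}$ and $m_{\mathbbm{H}}(t)=e^{-(\lambda_{\mathbbm{H}}+o(1))t}$. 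Thus $\E_{\mu_t^+}S_I-\E_\pi S_I\ge n^{2-o(1)}e^{-\lambda_\infty t}$ and $\E_{\mu_t^+}S_B-\E_\pi S_B\ge n^{1-o(1)}e^{-\lambda_{\mathbbm{H}}t}$, each exceeding the corresponding stationary standard deviation by a diverging factor when $t\le(1-\epsilon)\lambda_\infty^{-1}\log n$, resp.\ $t\le(1-\epsilon)(2\lambda_{\mathbbm{H}})^{-1}\log n$; a standard second-moment argument (again using finite speed to control variances over well-separated vertices, as in~\cite{LS1}) then forces $\|\mu_t^+-\pi\|_{\tv}\to1$ there. Taking the better of the two bounds gives $\theta_n\ge(1-o(1))(\lambda_\infty\wedge 2\lambda_{\mathbbm{H}})^{-1}\log n$, and with the previous paragraph the cutoff is at $(\lambda_\infty\wedge 2\lambda_{\mathbbm{H}})^{-1}\log n$.

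The main obstacle is matching the finite-volume ball-gaps $\lambda_v$ with the infinite-volume gaps $\lambda_\infty$ and $\lambda_{\mathbbm{H}}$. For the upper bound this is the ``$\ge$'' direction, and it is precisely why half-plane intersections enter: one needs the gap of every finite region that is an intersection of $\Z^2$ with half-planes carrying all-plus boundary to be, in the large-volume limit, the minimum of the gaps of the elementary infinite regions it locally resembles (here just $\Z^2$ and $\mathbbm{H}$, since corner contributions are negligible). For the lower bound it is the equality of rates --- that $m_\infty$ and $m_{\mathbbm{H}}$ decay no faster than the respective spectral-gap rates, i.e.\ that these gaps are genuinely realised by the magnetization and are not degraded by the finite geometry on the relevant $\Theta(\log n)\ll n$ timescale. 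Both ingredients are available in the strong spatial mixing regime through the recursive/block analysis underlying~\cite{MO}, but they must be assembled with uniform $o(1)$ error control over the roughly $n^2$ balls; a secondary and more routine point is the justification of the monotone-coupling and finite-speed approximations that transfer statements between $\Lambda$, its large interior sub-balls and the infinite-volume objects, which is exactly what confines the result to the strong spatial mixing regime.
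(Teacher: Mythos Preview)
Your overall stratification of vertices into bulk ($\sim n^2$), edge ($\sim n\log n$) and corner ($\sim\log^2 n$) classes, with respective rates $\lambda_\infty$, $\lambda_{\mathbbm{H}}$ and $O(1)$, is exactly the structure the paper exploits. The lower bound via magnetization statistics is a legitimate alternative to the paper's route (which reuses Proposition~\ref{prop-l1-l2} on a family of disjoint embedded blocks), and would go through once the decay rates are pinned down.

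The genuine gap is in your upper bound, specifically the assertion that $\lambda_v\ge\lambda_\infty$ for interior balls of radius $r\asymp\log n$ with free boundary, justified by ``monotone convergence $\lambda_\infty=\lim\gap(\text{free boxes})$''. No such monotonicity is available: the spectral gap of Glauber dynamics on a finite box is not known to be monotone in the box, and~\cite{MO} only gives a uniform \emph{lower} bound on $\sob$ (hence on $\gap$), not a comparison with the infinite-volume gap. What you actually need is $\lambda_v\ge\lambda_\infty-o(1)$ uniformly over the relevant balls, and this is precisely the hard step. You flag it yourself in the last paragraph, but the claim that it is ``available through the recursive/block analysis underlying~\cite{MO}'' is not correct; that analysis produces absolute bounds, not asymptotic identification of the gap.

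The paper handles this quite differently. It works not with free-boundary balls $B_v(r)$ but with blocks $\mathcal{T}^{(j)}$ of side $m=\log^3 n$ carrying plus boundary on $j$ faces and \emph{periodic} boundary on the rest (the periodicity is what later lets one compare cleanly with $\mathbb{H}_j$). It then proves directly that $\lambda_j(m)$ is Cauchy in $m$ by a bootstrap: apply the sparse-support machinery to $\mathcal{T}^{(j)}$ itself, mapping the resulting components into copies of $\mathcal{T}^{(j)}$ of nearby side-length $m'$, which yields $|\lambda_j(m)-\lambda_j(m')|=O(m^{-1/3}\log m)$ and hence a limit $\hat\lambda_j$ with quantitative rate. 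Only then is $\hat\lambda_j$ identified with the infinite-volume gap $\lambda_j^{(\infty)}$, via two separate arguments: $\hat\lambda_j\ge\lambda_j^{(\infty)}$ by coupling the block dynamics to the dynamics on $\mathbb{H}_j$ and tracking a weighted magnetization, and $\hat\lambda_j\le\lambda_j^{(\infty)}$ by approximating the Dirichlet-form infimum on $\mathbb{H}_j$ with cylinder functions and transferring to $\mathcal{T}^{(j)}$ via strong spatial mixing. Neither direction follows from~\cite{MO}, and neither is a monotonicity statement.
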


In \S\ref{sec:cutoff-locations} we prove the generalization of the above theorem (Theorem~\ref{t:plusGeneral}) to $\Z^d$ for any $d$. In that generality, the cutoff location is given in terms of $d$ separate infinite-volume spectral gaps in $\Z^d$ where the corresponding boundary is imposed on the half-planes in a subset of the $d$ coordinates.

Furthermore, as in the remark following Theorems~\ref{mainthm-potts-Zd}--\ref{mainthm-coloring-Zd}, the arguments extend
beyond $\Z^d$ to give the cutoff location for any $d$-dimensional lattice $\mathbb{L}^d$ with finite periodicity and homogenous boundary conditions.
For instance, a special case of example~\eqref{it-prod} from that remark is the \emph{circular ladder} graph, the product of the cycle $\Z/n\Z$ with an edge. In~\cite{LPW}*{Theorem~15.10} it was shown that Glauber dynamics for the Ising model on this graph mixes in continuous-time $O(n)$ and noted that the correct order would be $O(\log n)$ via a block-dynamics argument. Our results imply that in fact there is cutoff in this setting and the mixing asymptotics are $(2\lambda_\infty)^{-1}\log n$ where $\lambda_\infty$ is the (explicitly known) spectral gap of Glauber dynamics on the infinite ladder. 

Contrary to the results establishing cutoff, the argument that relates its location to infinite-volume spectral-gaps does rely on the monotonicity of the Ising model in an essential way. Nevertheless, in the special case of bipartite lattices such as $\Z^d$
  the standard monotone reordering of the partial order on lattice configurations extends the arguments also to anti-monotone systems. The analogue of Theorem~\ref{mainthm-plusBC2d} (cutoff and its location in terms of spectral-gaps of infinite-volume dynamics on half-plane intersections) thus holds also for
\begin{compactenum}[(i)]
  \item {\sf anti-ferromagnetic Ising model} on $\Z^d$ under all-plus/all-minus/free b.c.
  \item {\sf gas hard-core model} on $\Z^d$ under all-plus/all-minus/free b.c.
\end{compactenum}
and all throughout the strong spatial mixing regime.


\begin{figure}
\includegraphics[width=5.1in]{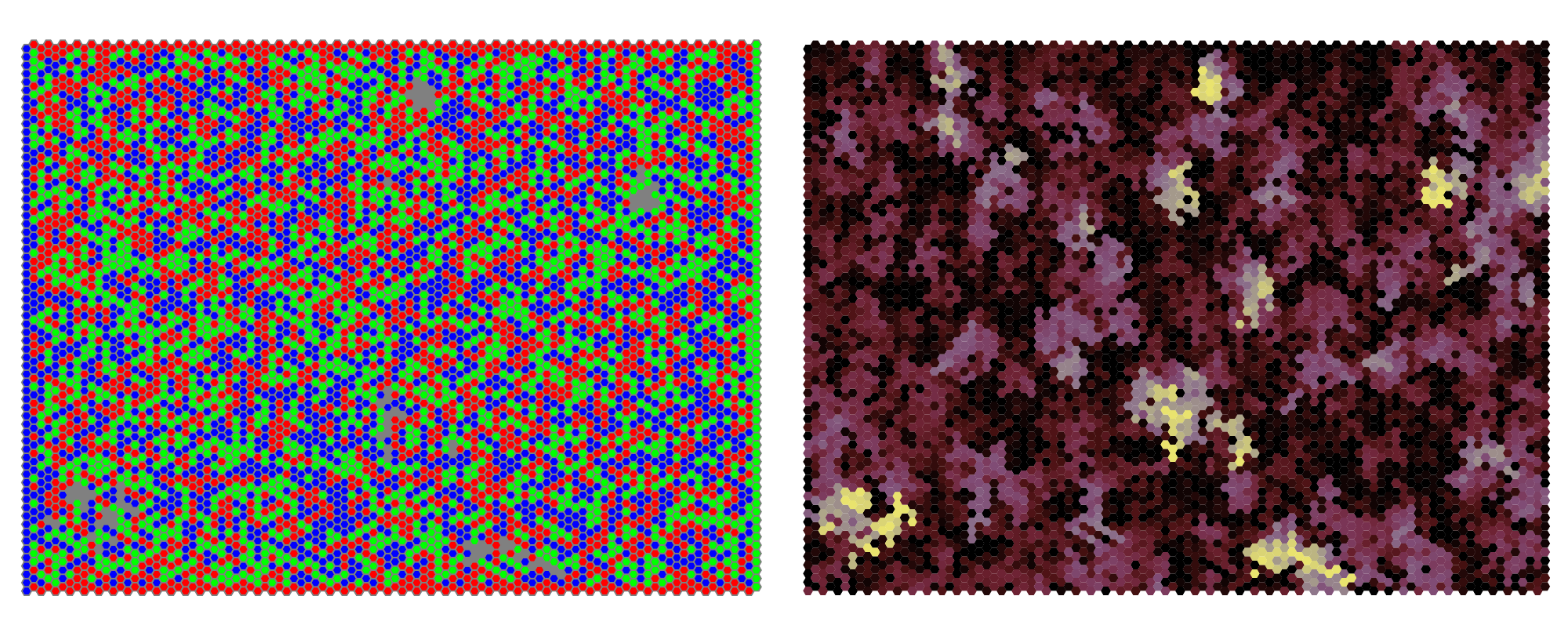}
\vspace{-0.35in}
\caption{Potts model with $q=3$ colors on a $64\times100$ hexagonal lattice at high temperature ($\beta=0.1$).
On right, update support for the barrier dynamics in yellow, intensity of red depicts the age outside the support. On left, the coupled configuration wherever uniquely determined.}
\vspace{-0.05in}
\label{fig:support-potts}
\end{figure}

\subsection{Methods}
At high temperatures the fast decay of spatial dependence implies that the measure rapidly becomes well mixed locally.  Moreover, propagation of information occurs at a constant rate and so the configuration in distant parts of the graph will be close to independent.  This suggests that a product Markov chain is the right heuristic to understand the convergence of high temperature Glauber dynamics. We next discuss showing cutoff for such chains, followed by our approach to reduce the problem to that setting.

\subsubsection{An $L^1$ to $L^2$ reduction for product chains}
An important step in our proof is bounding the total variation distance from stationarity of product chains by the $L^2$-distance of its component chains.  In general for $1\leq p \leq \infty$, the $L^p(\pi)$ distance between the measures $\nu$ and $\pi$ that nowhere vanishes is 
\[
\left\|\nu - \pi \right\|_{L^p(\pi)} = \Big(\sum_x \Big|\frac{\nu(x)}{\pi(x)} -1 \Big|^p\pi(x)\Big)^{1/p}\,.
\]
The mentioned reduction is formalized by the next proposition (proved in~\S\ref{sec-l1-l2}) which we believe is of independent interest.

\begin{mainprop}\label{prop-l1-l2}
Let $X_t = (X_t^1,\ldots,X_t^n)$ be a product chain, i.e.\ the $X_t^i$'s are mutually independent ergodic chains with stationary measures $\pi_1,\ldots,\pi_n$ respectively. Let $\pi$ denote the product measure of $\pi_{1},\ldots,\pi_{n}$ and define
\begin{align}
  \label{eq-product-m-def}
  \ltwo_t = \sum_{i=1}^n \left\|\P(X_t^i\in\cdot)-\pi_i \right\|^2_{L^2(\pi_i)}\,.
\end{align}
Then $\left\|\P(X_t\in\cdot)-\pi\right\|_\tv \leq \sqrt{\ltwo_t}$ for any $t>0$ and furthermore,
for every $\delta>0$ there exists some $\epsilon>0$ so that the following holds: If
for some $t>0$
\begin{align}
  \label{eq-product-assumption}
 \max_{i} \left\| \P(X_t^i\in \cdot)-\pi_i\right\|_{L^\infty(\pi_i)} < \epsilon
\end{align}
then
\begin{align}
  \label{eq-product-limit}
  \left|\left\|\P(X_t\in\cdot)-\pi\right\|_\tv - \big(2\Phi(\sqrt{\ltwo_t}/2)-1\big)\right| < \delta\,,
\end{align}
where $\Phi$ denotes the cumulative distribution function of the standard normal.
In particular, if for a family of chains $\ltwo_t\to 0$ then
$\left\|\P(X_t\in\cdot)-\pi\right\|_\tv \to 0$ whereas if
$\ltwo_t \to \infty$ and~\eqref{eq-product-assumption} holds for $\epsilon\to 0$ then
$\left\|\P(X_t\in\cdot)-\pi\right\|_\tv \to 1$.
\end{mainprop}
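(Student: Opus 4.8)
\emph{Setup.} Write $\nu_i:=\P(X_t^i\in\cdot)$ and pass to the relative densities $f_i:=\frac{d\nu_i}{d\pi_i}-1$, which satisfy $\E_{\pi_i}f_i=0$ and $\|f_i\|_{L^2(\pi_i)}^2=m_i$, so that $\ltwo_t=\sum_{i=1}^n m_i$. Taking $X=(X_1,\dots,X_n)$ with independent coordinates $X_i\sim\pi_i$, one has $\|\nu-\pi\|_\tv=\tfrac12\,\E\,|W|$ for $W:=\prod_{i=1}^n(1+f_i(X_i))-1$, and the log-likelihood ratio $L:=\log\frac{d\nu}{d\pi}=\sum_{i=1}^n\log(1+f_i(X_i))$ is a sum of independent variables with $\E[e^L]=1$.

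\emph{The inequality $\|\nu-\pi\|_\tv\le\sqrt{\ltwo_t}$.} I would combine Pinsker's inequality, the tensorization of relative entropy, and $D_{\mathrm{KL}}\le\chi^2$: $\|\nu-\pi\|_\tv^2\le\tfrac12 D_{\mathrm{KL}}(\nu\,\|\,\pi)=\tfrac12\sum_i D_{\mathrm{KL}}(\nu_i\,\|\,\pi_i)\le\tfrac12\sum_i\chi^2(\nu_i\,\|\,\pi_i)=\tfrac12\ltwo_t$, using $\chi^2(\nu_i\,\|\,\pi_i)=\|\nu_i-\pi_i\|_{L^2(\pi_i)}^2$. (Equivalently, via the tensorization of the Hellinger affinity together with the elementary bound $(\E_{\pi_i}\sqrt{1+f_i})^2\ge 1-m_i$.)

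\emph{The Gaussian limit \eqref{eq-product-limit}.} I would argue by contradiction and compactness. If it failed for some $\delta>0$ there would be product chains with $\varepsilon_k:=\max_i\|f_i^{(k)}\|_{L^\infty(\pi_i)}\to 0$ but $\big|\|\nu^{(k)}-\pi^{(k)}\|_\tv-(2\Phi(\sqrt{\ltwo^{(k)}}/2)-1)\big|\ge\delta$; pass to a subsequence with $\ltwo^{(k)}\to\ell\in[0,\infty]$. If $\ell=0$, the previous inequality gives $\|\nu^{(k)}-\pi^{(k)}\|_\tv\le\sqrt{\ltwo^{(k)}}\to 0$ while $2\Phi(\sqrt{\ltwo^{(k)}}/2)-1\to 0$, a contradiction. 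If $\ell=\infty$, expanding $\log(1+f_i)=f_i-\tfrac12 f_i^2+O(|f_i|^3)$ (legitimate once $\varepsilon_k<\tfrac12$) gives $L=S_k-\tfrac12 T_k+R_k$ with $S_k=\sum_i f_i(X_i)$, $T_k=\sum_i f_i(X_i)^2$ and $|R_k|\le\varepsilon_k T_k$; Chebyshev/Markov bounds using $\E_{\pi^{(k)}} S_k=0$, $\E_{\nu^{(k)}} S_k=\ltwo^{(k)}$, $\var S_k\asymp\ltwo^{(k)}$ and $T_k=\ltwo^{(k)}(1+o_P(1))$ then force $\P_{\pi^{(k)}}(L\ge0)\to 0$ and $\P_{\nu^{(k)}}(L\ge0)\to 1$, whence $\|\nu^{(k)}-\pi^{(k)}\|_\tv\ge\P_{\nu^{(k)}}(L\ge0)-\P_{\pi^{(k)}}(L\ge0)\to 1=\lim(2\Phi(\sqrt{\ltwo^{(k)}}/2)-1)$, again a contradiction. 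In the decisive case $\ell\in(0,\infty)$, the uniform smallness of the $f_i$ kills the Lindeberg sum, so $S_k\Rightarrow N(0,\ell)$, while $T_k\to\ell$ in $L^2$ and $R_k\to 0$ in probability; hence $L^{(k)}\Rightarrow L_\infty\sim N(-\ell/2,\ell)$. A direct computation on $\{L\ge0\}=\{e^L\ge1\}$—using the tilt identity $\E_\pi[e^L\one_A]=\P_\nu(A)$ and that an $N(0,\ell)$ variable tilted by $e^L$ becomes $N(\ell,\ell)$—gives $\tfrac12\E|e^{L_\infty}-1|=2\Phi(\sqrt\ell/2)-1$. It remains to upgrade $L^{(k)}\Rightarrow L_\infty$ to $\E|e^{L^{(k)}}-1|\to\E|e^{L_\infty}-1|$, i.e.\ to uniform integrability of $\{e^{L^{(k)}}\}$; this is supplied by the a priori bound $\E_\pi[e^{(1+\gamma)L^{(k)}}]=\prod_i\E_{\pi_i}[(1+f_i)^{1+\gamma}]\le\exp(C(\gamma)\,\ltwo^{(k)})$, which follows from $(1+u)^{1+\gamma}\le 1+(1+\gamma)u+C(\gamma)u^2$ for $|u|\le\tfrac12$ together with $\E_{\pi_i}f_i=0$, and is bounded along the subsequence. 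Thus $\|\nu^{(k)}-\pi^{(k)}\|_\tv\to 2\Phi(\sqrt\ell/2)-1$, contradicting the gap $\ge\delta$. The closing ``in particular'' assertions then follow at once: the $\ltwo_t\to 0$ case from the inequality above, and the $\ltwo_t\to\infty$ case (with $\varepsilon\to 0$) from \eqref{eq-product-limit} since $2\Phi(\sqrt{\ltwo_t}/2)-1\to 1$ and $\delta$ is arbitrary.

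\emph{Main obstacle.} The crux is the last step in the decisive regime: total variation equals $\tfrac12\E|e^L-1|$ with $e^L$ unbounded, so convergence in law of the log-likelihood ratio does not by itself control it—one needs the uniform $L^{1+\gamma}(\pi)$ bound on $e^L$, and it is precisely hypothesis \eqref{eq-product-assumption} that both legitimizes the $\log(1+f_i)$ expansion and, through $\E_{\pi_i}f_i=0$, produces that bound. A subsidiary point is keeping the expansion error $\sum_i|f_i|^3$ and the fluctuations of $\sum_i f_i(X_i)^2$ negligible uniformly over the a priori unbounded number of coordinates $n$; both are $O(\varepsilon_k)$ times quantities of order $\ltwo_t$, hence harmless when $\ltwo_t$ stays of constant order and swamped by the scaling when $\ltwo_t\to\infty$.
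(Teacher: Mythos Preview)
Your proof is correct and takes a genuinely different route from the paper in both parts.

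For the inequality $\|\nu-\pi\|_\tv\le\sqrt{\ltwo_t}$, the paper computes the $L^2$-distance of the product chain directly via $\E_\pi(\nu/\pi)^2=\prod_i(1+m_i)\le e^{\ltwo_t}$ and then uses $\tfrac12\sqrt{e^x-1}\le\sqrt{x}$ on $[0,1]$. Your Pinsker $+$ KL-tensorization $+$ $D_{\mathrm{KL}}\le\chi^2$ chain is shorter and in fact yields the better constant $\|\nu-\pi\|_\tv\le\sqrt{\ltwo_t/2}$.

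For the Gaussian limit, the paper's key simplification is to write total variation via the \emph{negative part},
\[
\|\nu-\pi\|_\tv=\E_\pi\bigl|e^{L}-1\bigr|^{-}=\E_\pi\bigl(1-e^{L}\bigr)^{+}\,,
\]
which is a \emph{bounded monotone} function of $L$. A quantitative Berry--Esseen bound on $\sum_i\log(1+f_i)$ then controls this expectation directly, with no uniform-integrability argument needed; the large-$\ltwo_t$ regime is handled by Gaussian concentration of $L$ near $-\tfrac12\ltwo_t$. You instead work with $\tfrac12\E_\pi|e^L-1|$, pass to the weak limit via Lindeberg, and close the gap with the $(1{+}\gamma)$-moment bound $\E_\pi e^{(1+\gamma)L}=\prod_i\E_{\pi_i}(1+f_i)^{1+\gamma}\le e^{C(\gamma)\ltwo_t}$ to secure uniform integrability. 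Both arguments are valid; the paper's negative-part trick sidesteps the UI machinery entirely, while your approach is the more ``textbook'' weak-convergence route and makes the role of hypothesis~\eqref{eq-product-assumption} (legitimizing the Taylor expansion \emph{and} supplying the moment bound) very transparent. Your subsequence/compactness framing also packages the $\delta$--$\epsilon$ uniformity differently from the paper's direct quantitative estimate, but to the same effect.
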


This identifies the cutoff location for product chains as above as the time $t$ such that $\ltwo_t \asymp 1$, and further establishes the total-variation distance from equilibrium within the cutoff window to be $2\Phi(\sqrt{\ltwo_t}/2)-1 = \Erf(\sqrt{\ltwo_t/8})$ asymptotically, where $\Erf(x)=\frac{2}{\sqrt{\pi}}\int_0^x e^{-t^2}dt$ is the error-function.

\begin{example*} Let $(X_t)$ be continuous-time random walk on the hypercube $\Z_2^n$. That is, its discrete-time analogue $(K_t)$ flips a uniform coordinate at each step, while $X_t = K_{N_t}$ for a Poisson random variable $N_t\sim \Po(t)$ (eliminating periodicity issues). This was one of the original examples of cutoff, which occurs here at time $\frac14 n \log n$
as shown by Aldous~\cite{Aldous}. Moreover, cutoff occurs within a window of $O(n)$, as follows from explicit bounds of~\cite{DiSh2} on the distance of the lazy discrete-time chain (applies $K$ with probability $\frac{n}{n+1}$ and is idle otherwise) from equilibrium at time $t=\frac14 n\log n + c n$. These bounds were refined in~\cite{DGM}*{Theorem 1} to show this distance is $\Erf(e^{-2c}/\sqrt{8})+o(1)$.

To reobtain this result for the continuous-time chain via Proposition~\ref{prop-l1-l2} argue as follows.
Let $(\bar{X}_t)$ be the lazy continuous-time chain corresponding to the discrete kernel $\bar{K}=(I+K)/2$. It is well known (and easy to see) that the heat-kernels corresponding to $(X_t),(\bar{X}_t)$ satisfy $H_t = \bar{H}_{2t}$,
  thus it suffices to estimate $|\P(\bar{X}_t\in\cdot)-\pi\|_\tv$. Yet $(\bar{X}_t)$ is the product chain of i.i.d.\ chains $(\bar{X}_t^i)$ with stationary measures that are uniform on $\{0,1\}$, each of which is flipped to a uniform state at rate $1/n$. One can easily verify that in this case $ \left\|\P(\bar{X}_t^i\in\cdot)-\pi_i \right\|^2_{L^2(\pi_i)} = \mathrm{e}^{-2t/n}$. As such, the proposition implies that $(\bar{X}_t)$ has cutoff at $\frac12 n \log n$. Furthermore, if $t = \frac12n\log n + c n$ for some $c \in \R$ then $|\P(\bar{X}_t\in\cdot)-\pi\|_\tv = \Erf(e^{-c}/\sqrt{8}) + o(1)$. Translating this to $(X_t)$ recovers the mentioned result that if $t=\frac14 n\log n + c n$ for some $c\in \R$ then
 \[\left|\P(X_t\in\cdot)-\pi\right\|_\tv = \Erf\big(e^{-2c}/\sqrt{8}\big) + o(1)\,.\]
\end{example*}


\begin{figure}
\centering
\includegraphics[width=5in]{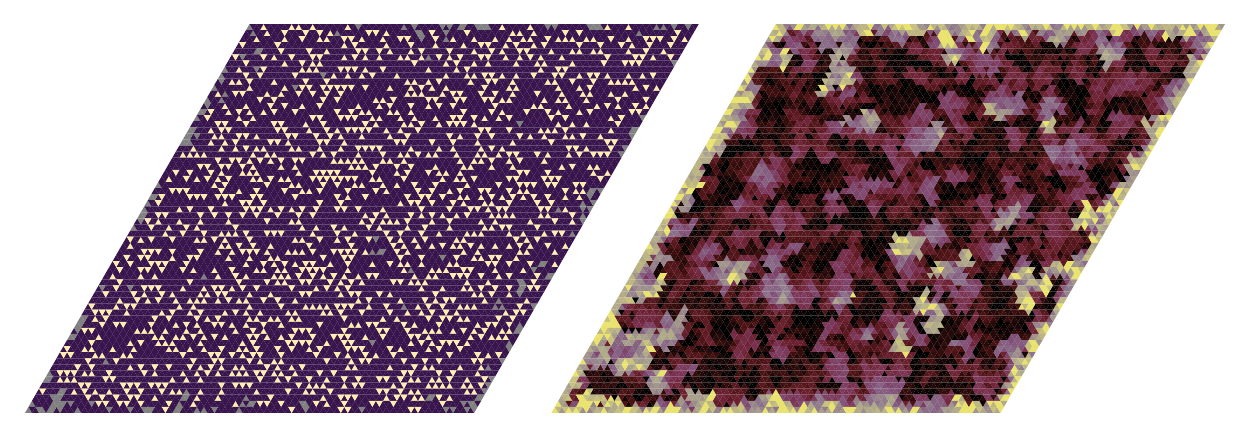}
\caption{
Gas hard-core model on a $64\times128$ triangular lattice at low fugacity ($\lambda=0.6$).
The update support is highlighted on the right vs.\ the coupled configuration wherever uniquely determined on the left.}
\label{fig:support-hc}
\end{figure}

\begin{remark*}
Proposition~\ref{prop-l1-l2} in fact applies to any two product-measures (without requiring that one arises from a Markov process and the other is its stationary measure) and provides an $L^1$ to $L^2$ reduction of the distance between them.
  One virtue of formulating it in terms of Markov chains (beyond its application in this work)
  is that one can then derive
  the $L^\infty$ condition on $X_t^i$ from the analogous $L^2$ requirement on $X_{t/2}^i$ (see, e.g., Corollary~\ref{cor-products}).
\end{remark*}

\subsubsection{Breaking dependencies}
While the product chain paradigm is useful in explaining the presence of cutoff, the key ideas of the proof are methods to break the dependencies inherent in the dynamics.  Proposition~\ref{prop-l1-l2} does suggest a natural way to establish lower bounds on the mixing time.  If one takes well separated blocks $B_i \subset V$, the projections of the chain  onto the blocks are essentially independent.  Taking $X_t^i=X_t(B_i)$ in Proposition~\ref{prop-l1-l2} gives lower bounds on the distance from stationarity which are sufficiently strong to give sharp enough lower bounds on the mixing time.

The dependencies of the measure and the chain, however, make sharp upper bounds far more challenging.
Since projection decreases the total variation distance this approach does not yield upper bounds.  Despite the rapid decay of correlations with distance, it remained a challenging open problem to give sharp total variation distance bounds even for the $\oned$ Ising model.
To overcome this in the case of the Ising model on the torus in~\cite{LS1} we introduced a method to bound the total variation distance at time $t+s$ by the expected total variation distance at time $s$ of the chain projected onto a random set we call the \emph{update support}.

Viewing the Markov chain from time $t$ to time $t+s$ as a random function $f_W:\{+1,-1\}^V \to \{+1,-1\}^V$ where $f_W(X_t)=X_{t+s}$ it may be the case that the value of $f_W$ is determined by only a subset of the spins of $V$ --- the smallest such set is the update support.  Since by definition the spins outside of this set have no effect on the value of the chain at time $t+s$, the total variation distance at time $t+s$ is bounded by the expected projection onto the update support at time $t$. (Lemma~\ref{lem-support} places this in a more general context of random mapping representations for Markov chains.)

A key step in our analysis is then to analyze the structure of the update support.  While~\cite{LS1} used the symmetries of the torus in numerous ways, in this paper we consider graphs and systems with highly inhomogeneous structure and external fields.  The hypothesis of Theorem~\ref{mainthm-arbitrary} is chosen in such a way that the dynamics in most local neighbourhoods of radius $r \asymp \Delta \sobinf^{-1} \log n$ would couple completely in time of smaller order than the mixing time.  The sub-exponential growth is essential as it ensures that these neighborhoods have size $n^{o(1)}$.

To ensure that well separated vertices are independent we modify the map $f_W$ to satisfy this property yet in such a way that it can be coupled to the true dynamics with high probability. We call this modified process the \emph{barrier dynamics}, constructed so that information cannot travel further than a given distance.
Using this independence we wish to show that when $s$ is sufficiently large, yet still of smaller order than the mixing time, the update support is \emph{sparse}. Here again the assumption of sub-exponential growth is essential as it allows us to take union bounds and show that the support size decays exponentially and that it splits into small well-separated components of diameter at most $\log^2 n$ w.h.p.\ (see Figures~\ref{fig:support-potts}--\ref{fig:support-ising}).  Finally, our requirement on the log-Sobolev constant is chosen in such a way that the dynamics restricted to these regions mixes rapidly.

\begin{figure}
\centering
\includegraphics[width=5in]{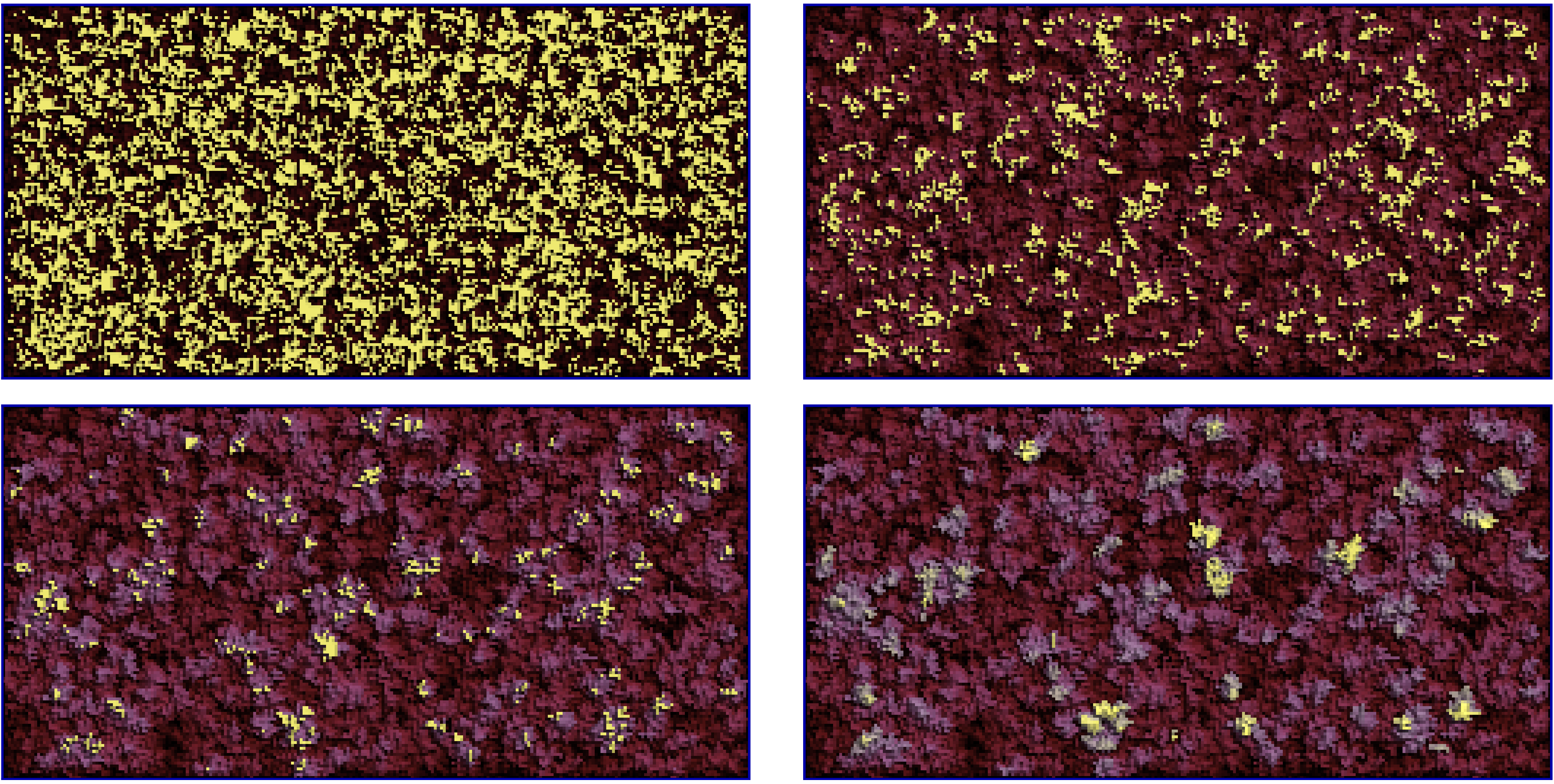}
\caption{Ising model on a $128\times 256$ square lattice at $\beta=0.32$.
Frames depict evolution of the update support of the barrier dynamics (support sites in yellow) into microscopic disconnected components.}
\label{fig:support-ising}
\end{figure}



The quantitative bounds of Proposition~\ref{prop-l1-l2} then allow us to compare the derived upper and lower bounds. Again, arbitrary geometry and external fields may grant much stronger effect to certain parts of the system on mixing compared to others (whereas the setting of the torus studied in~\cite{LS1} had complete symmetry and the problem reduced to a product of i.i.d.\ chains). Nevertheless, it is possible to show that there exists a well separated subset of blocks which dominates an expected sparse set of blocks from the matching upper bound.  Finding this set of blocks is done in a greedy manner.  Overall, we identify a time $t^\star$ at which the dynamics is far from mixed through the lower bound and then is well mixed at time $t^\star + s $ through the upper bound.

\subsubsection{General spin-systems}
In \S\ref{sec:general} we extend our analysis to the case of general spin-systems.  Our construction of the function $f_W$ described above implicitly used a grand-coupling of the chain starting from all possible initial configurations.  In the case of the Ising model this is the standard monotone coupling, through which the chains from all starting states coalesce rapidly (namely, exponentially fast) at high temperatures.
For general models, e.g., non-monotone systems, we extend the framework to remain valid so long as such a grand coupling is available (see Theorem~\ref{t:general}). For concreteness we give a condition, valid for high enough temperatures, under which one can construct a grand coupling as required, thus implying cutoff and the aforementioned Theorems~\ref{mainthm-potts-Zd}--\ref{mainthm-coloring-Zd}. This approach is akin to methods used in Coupling From The Past techniques for perfect simulation.

\subsubsection{Cutoff location in terms of infinite-volume spectral-gaps}
In \S\ref{sec:cutoff-locations} (see Theorem~\ref{t:plusGeneral}) we derive the cutoff location for the Ising model on a box in $\Z^d$ under free/all-plus/all-minus boundary conditions. Unlike the setting of the torus considered in~\cite{LS1}, the presence of boundary conditions destroys the symmetry of the graph and creates a disparity between the mixing effects of vertices near the boundary vs.\ those near the center of the box.  As such, the cutoff location becomes a function of the spectral gap of the dynamics not just on the infinite-volume lattice but also on intersections of half-spaces. Our approach is to classify vertices according to their position in the box in terms of the number of faces they are close to and to couple the local dynamics to the one in the appropriate half-space. We note in passing that at present it is unknown whether for instance the spectral-gaps featured in Theorem~\ref{mainthm-plusBC2d} satisfy $\lambda_{\mathbbm{H}} < \lambda_\infty$. (A weak inequality holds, while equality would imply that the cutoff location specified there is only a function of $\lambda_\infty$.)
There are plausible heuristics suggesting that these half-space spectral-gap terms are imperative in at least some settings, e.g.,
models with stronger correlations on the boundary than in the bulk due to boundary conditions (see~\cite{Martinelli97} for more details).


\subsubsection{Organization}
In \S\ref{sec:prelim} we give the main definitions related to Markov chains, spin systems and the Glauber dynamics.  We prove Proposition~\ref{prop-l1-l2} in \S\ref{sec-l1-l2}.  In \S\ref{sec:arbitrary} we prove Theorem~\ref{mainthm-arbitrary} showing that cutoff applies in general on graphs of sub-exponential growth whenever the log-Sobolev constants are suitably bounded.  Section~\ref{sec:general} extends our methods for proving cutoff to general (not necessarily monotone) spin systems at high temperatures, establishing Theorems~\ref{mainthm-potts-Zd}, \ref{mainthm-hardcore-Zd} and~\ref{mainthm-coloring-Zd} for the Potts, coloring and hardcore models respectively.  Finally, \S\ref{sec:cutoff-locations} deals with the case of the Ising model with plus or free boundary conditions proving Theorem~\ref{mainthm-plusBC2d}.

\section{Preliminaries}\label{sec:prelim}
\subsection{Mixing and the cutoff phenomenon}\label{sec:prelim-cutoff}
The $L^1$ (or total-variation) distance of a Markov chain from equilibrium is one of the most important notions of convergence in MCMC theory. For two probability measures $\nu_1,\nu_2$ on a finite space $\Omega$ the total-variation distance is defined as
\[
\|\nu_1-\nu_2\|_\tv = \max_{A\subset \Omega} |\nu_1(A)-\nu_2(A)| = \frac12\sum_{\sigma\in\Omega} |\nu_1(\sigma)-\nu_2(\sigma)| \,,
\]
i.e.\ half the $L^1$-distance between the two measures.
Let $(X_t)$ be an ergodic finite Markov chain $(X_t)$ with stationary distribution $\pi$. The total-variation mixing-time of $(X_t)$, denoted $\tmix(\epsilon)$ for $0<\epsilon<1$, is defined to be
\[ \tmix(\epsilon) \deq \inf\left\{t \;:\; \max_{\sigma_0 \in \Omega} \| \P_{\sigma_0}(X_t \in \cdot)- \pi\|_\tv \leq \epsilon \right\}\,,\]
where here and in what follows $\P_{\sigma_0}$ denotes the probability given $X_0=\sigma_0$.
A family of ergodic finite Markov chains $(X_t)$, indexed by an implicit parameter $n$, is said to exhibit \emph{cutoff} iff the following sharp transition in its convergence to stationarity occurs:
\begin{equation}\label{eq-cutoff-def}\lim_{n\to\infty} \frac{\tmix(\epsilon)}{\tmix(1-\epsilon)}=1 \quad\mbox{ for any $0 < \epsilon < 1$}\,.\end{equation}
That is, $\tmix(\alpha)=(1+o(1))\tmix(\beta)$ for any fixed $0<\alpha<\beta<1$. Addressing the asymptotic error in this formulation is the notion of the \emph{cutoff window}: A sequence $w_n = o\big(\tmix(e^{-1})\big)$ is a cutoff window if $\tmix(\epsilon) = \tmix(1-\epsilon) + O(w_n)$ holds for any $0<\epsilon<1$ with an implicit constant that may depend on $\epsilon$.
Equivalently, if $t_n$ and $w_n$ are two sequences such that $w_n =o(t_n)$, one may define that a sequence of chains exhibits cutoff at $t_n$ with window $w_n$ iff
\[\left\{\begin{array}
  {r}\displaystyle{\lim_{\gamma\to\infty} \liminf_{n\to\infty}
 \max_{\sigma_0 \in \Omega} \| \P_{\sigma_0}(X_{t_n-\gamma w_n} \in \cdot)- \pi\|_\tv
  = 1}\,,\\
  \noalign{\medskip}
  \displaystyle{\lim_{\gamma\to \infty} \limsup_{n\to\infty}
 \max_{\sigma_0 \in \Omega} \| \P_{\sigma_0}(X_{t_n+\gamma w_n} \in \cdot)- \pi\|_\tv
  = 0}\,.
\end{array}\right.\]

The cutoff phenomenon was first identified for random transpositions on the symmetric group in \cite{DiSh},
and for the riffle-shuffle and random walks on the hypercube in \cite{Aldous}. Its name was coined
by Aldous and Diaconis in their famous paper~\cite{AD}, where cutoff was shown for the top-in-at-random card shuffling process.
See \cites{Diaconis,CS,SaloffCoste2} and the references therein for more on the cutoff phenomenon.
Note that in the examples above, as well as in most others where cutoff has been rigorously shown, the stationary distribution has
many symmetries or is essentially one-dimensional (e.g.\ uniform on the symmetric group~\cite{DiSh},
uniform on the hypercube~\cite{Aldous} and one-dimensional birth-and-death chains~\cite{DLP2}).

Establishing cutoff can prove to be challenging already for simple families of chains. For instance,
even for random walks on random regular graphs (where the stationary distribution is uniform) cutoff was only recently
verified in~\cite{LS2}. Prior to that work there was no known example of a family of bounded-degree graphs where the random walk exhibits cutoff.

\subsection{Ising model}\label{sec:prelim-ising}

The \emph{Ising model} on a finite graph $G$ with vertex-set $V$ and edge-set $E$
is defined as follows. Its set of possible configurations is $\Omega=\{\pm1\}^V$, where each configuration
corresponds to an assignment of plus/minus spins to the sites in $V$. The probability that the system is in a
configuration $\sigma \in \Omega$ is given by the Gibbs distribution
\begin{equation}
  \label{eq-Ising}
  \mu(\sigma)  = \frac1{Z(\beta)} \exp\left(\beta \sum_{uv\in E} \sigma(u)\sigma(v) + h \sum_{u \in V} \sigma(u)\right) \,,
\end{equation}
where the partition function $Z(\beta)$ is a normalizing constant.
The parameters $\beta$ and $h$ are the inverse-temperature and external field respectively; for $\beta \geq 0$ we say that the
model is ferromagnetic, otherwise it is anti-ferromagnetic.
These definitions extend to infinite locally finite graphs (see e.g.\ \cites{Liggett,Martinelli97}).

In full generality the model associates arbitrary (possibly non-uniform) interaction strengths to the bonds $\{\beta_{uv} : uv\in E\}$ as well as arbitrary external fields to different sites $\{ h_u : u\in V\}$.

We denote the boundary of a set $\Lambda\subset V$ as the neighboring sites of $\Lambda$ in $V\setminus\Lambda$
and call $\tau\in\{\pm1\}^{\partial \Lambda}$ a \emph{boundary condition}. A periodic boundary condition on $(\Z/n\Z)^d$
corresponds to a $d$-dimensional torus of side-length $n$.
For $\Lambda \subset V$, denote by $\sigma(\Lambda)$ the spins that $\sigma$ assigns to $\Lambda$.
Let $\mu_\Lambda^\tau$ denote the measure on configurations on $\Lambda$ given the boundary condition $\tau \in \{\pm1\}^{\partial\Lambda}$, that is,
the conditional measure $\mu\big(\sigma(\Lambda)\in \cdot \mid \sigma(\partial\Lambda)=\tau(\partial\Lambda)\big)$.
We will often refer to the projection of this measure onto a subset of the spins $A \subset \Lambda$ (i.e.\ the marginal of $\mu_\Lambda^\tau$ on $\{\pm1\}^A$) which we will denote by $\mu_\Lambda^\tau|_A$.
The notation $\mu_\Lambda^\emptyset$ will denote the measure on $\{\pm1\}^\Lambda$ under free boundary, i.e.\ the one obtained by setting to $0$ all the interactions between $\Lambda$ and $\partial\Lambda$.

\subsection{Spin-system models}\label{sec:prelim-spin-systems}
%
In \S\ref{sec:general} we extend the cutoff criterion provided in Theorem~\ref{mainthm-arbitrary} for Glauber dynamics for the Ising model to a general class of spin-systems with nearest-neighbor interactions, defined as follows.

Let $G=(V,E)$ be a finite graph and let $\spinset$ be a finite set (spins).
Further consider functions $g_{u,v}:\spinset^2 \to \R\cup\{-\infty\}$ for every bond $(u,v)\in E$ (nearest-neighbor interactions) and functions $h_u : \spinset\to\R$ for all $u\in V$ (external fields).
The corresponding spin system is the probability distribution on the set of configuration $\spinset^V$ given by
\[
\mu(\sigma)=\frac1Z \exp\bigg[ \sum_{(u,v)\in E }g_{u,v}\big(\sigma(u),\sigma(v)\big) + \sum_{u\in V} h_u\big(\sigma(u)\big)\bigg]\,,
\]
where $Z$ is a normalizing constant (the partition function).

The following well-known models were featured in Theorems~\ref{mainthm-potts-Zd}--\ref{mainthm-coloring-Zd}:
\begin{itemize}[\indent$\bullet$]
  \item The $q$-state {\sf Potts model} with inverse-temperature $\beta$:
  \[ \spinset=\{1,\ldots,q\} ~,~~
  g_{u,v}(x,y)=\left\{\begin{array} {cl}2\beta & x=y \\ 0 & \mbox{otherwise} \end{array}\right. ~,~~
  h\equiv 0\,.\]
      (Notice the factor of $2$ in the interactions $g_{u,v}$ whose sole purpose is to make $\beta$ consistent with the Ising model definition~\eqref{eq-Ising} when $q=2$.)
      The model is \emph{ferromagnetic} if $\beta\geq 0$ and \emph{anti-ferromagnetic} if $\beta<0$.
  \item The {\sf proper $q$-colorings model}: the special case of the anti-ferromagnetic Potts model when $\beta = -\infty$.
  \item The {\sf gas hard-core model} with fugacity $\lambda>0$:
  \[ \spinset=\{0,1\}\,,~ g_{u,v}(x,y)=\left\{\begin{array}
    {cl}-\infty & x=y=1 \\ 0 & \mbox{otherwise}
  \end{array}\right.,~ h_u(x) = \one_{\{x=1\}}\log\lambda\,.\]
\end{itemize}

%

\subsection{Glauber dynamics for the Ising model}\label{sec:prelim-glauber}
Glauber dynamics for the Ising model (also dubbed the \emph{Stochastic Ising} model) is a family of continuous-time Markov chains on the state space $\Omega$, reversible with respect to the Gibbs distribution, given by the generator
\begin{equation}
  \label{eq-Glauber-gen}
  (\mathscr{L}f)(\sigma)=\sum_{x\in \Lambda} c(x,\sigma) \left(f(\sigma^x)-f(\sigma)\right)
\end{equation}
where $\sigma^x$ is the configuration $\sigma$ with the spin at $x$ flipped.
The transition rates $c(x,\sigma)$ can be chosen arbitrarily subject to certain natural conditions (e.g., detailed balance), yet our attention in this work will be focused on the two most notable examples of Glauber dynamics:
\begin{enumerate}[(i)]
\item \emph{Metropolis}: $  c(x,\sigma) = \exp\Big(2\beta\sigma(x)\sum_{y \sim x}\sigma(y)\Big)  \;\wedge\; 1\; $.
\item \emph{Heat-bath}:  $\; c(x,\sigma) = \bigg[1+ \exp\Big(-2\beta\sigma(x)\sum_{y \sim x}\sigma(y)\Big)\bigg]^{-1}\;$.
\end{enumerate}
Each of these two flavors of Glauber dynamics has an intuitive and useful equivalent graphical interpretation: Assign i.i.d.\ rate-one Poisson clocks to the sites, and upon a clock ringing at some site $x$ do as follows:
\begin{enumerate}[(i)]
  \item \emph{Metropolis}: flip $\sigma(x)$ if the new state, $\sigma^x$, has a lower energy (that is, $\mu(\sigma^x)\geq \mu(\sigma)$), otherwise perform the flip with probability $\mu(\sigma^x)/\mu(\sigma)$.
  \item \emph{Heat-bath}: erase $\sigma(x)$ and replace it with a sample from the conditional distribution given the spins at its neighboring sites.
\end{enumerate}
It is easy to verify that the above chains are indeed ergodic and reversible with respect to the Gibbs distribution $\mu$.

The mixing time of Glauber dynamics has been extensively studied (see \S\ref{subsec:prelim-gap-sob} discussing the related relaxation time $\gap^{-1}$ and the references therein), yet as for cutoff, until recently the only setting where it was shown was heat-bath Glauber dynamics for Ising on the \emph{complete graph}~\cites{LLP,DLP}. There, the mean-field geometry reduced the problem to the analysis of a \oned birth-and-death chain (the sum-of-spins) which governs
the mixing of the entire system. While this further motivated the conjecture of Peres on cutoff for the Ising model on lattices (see~\cites{LLP,LPW}), it failed to provide insight for the dynamics on $(\Z/n\Z)^d$ even in the well-understood case of $d=1$.

In the companion paper~\cite{LS1} the authors verified the conjectured cutoff for Glauber dynamics for the Ising model on the $d$-dimensional torus $(\Z/n\Z)^d$ for any $d\geq 1$ and throughout the strong spatial mixing regime, which for $\Z^2$ extends all the way to the critical inverse-temperature $\beta_c = \frac12\log(1+\sqrt{2})$.

\subsection{Spectral gap and the logarithmic-Sobolev constant}\label{subsec:prelim-gap-sob}
The following quantities, defined next directly for Glauber dynamics for simplicity,
 provide useful analytic methods for bounding the mixing time of a chain.
The spectral gap and log-Sobolev constant of the
continuous-time Glauber dynamics are given by the following Dirichlet form
(see, e.g., \cites{Martinelli97,SaloffCoste}):
\begin{align}\label{eq-dirichlet-form}
\gap = \inf_f \frac{\mathscr{E}(f)}
{\var(f)}~,~\quad \sob = \inf_{f} \frac{\mathscr{E}(f)}{\operatorname{Ent}(f)}~,
\end{align}
where the infimum is over all nonconstant $f\in L^2(\mu)$ and
\begin{align*}
\mathscr{E}(f) &= \left<\mathscr{L} f,f\right>_{L^2(\mu)} = \frac12\sum_{\sigma,x} \mu(\sigma)c(x,\sigma)\left[f(\sigma^x)-f(\sigma)\right]^2\,,\\
\operatorname{Ent}(f) &= \E\left[f^2(\sigma) \log \left( f^2(\sigma) /\E f^2(\sigma)\right)\right] ~.
\end{align*}
It is well known (see e.g.\ \cites{DS,AF}) that for any finite ergodic reversible Markov chain $0<2\sob<\gap$ and  $\gap^{-1}\leq \tmix(1/\mathrm{2e})$.
In our case, since the sites are updated via rate-one independent Poisson clocks, we also have $\gap\leq 1$.

By bounding the log-Sobolev constant one may obtain remarkably sharp upper bounds not only for
 the total-variation mixing-time but also for the $L^2$-mixing (cf., e.g., \cites{DS1,DS2,DS,DS3,SaloffCoste}). The following theorem
of Diaconis and Saloff-Coste~\cite{DS}*{Theorem 3.7} (its form as follows appears
in~\cite{SaloffCoste}*{Theorem 2.2.5}, also see ~\cite{AF}*{Chapter 8}) demonstrates this powerful
method.
\begin{theorem}\label{thm-l2-sobolev}
Let $(Y_t)$ be a finite reversible continuous-time Markov chain with stationary distribution $\pi$. Let $s > 0$ and for some initial state $y_0$ set
\[ t = (4\sob)^{-1}\big|\log\log\left(1/\pi(y_0)\right)\big|^+ + \gap^{-1} s\,,\]
where $|a|^{+} = a \vee 0$. Then
\[ \left\|\P_{y_0} (Y_t\in \cdot)-\pi\right\|_{L^2(\pi)}\leq \exp(1-s)\,.\]
\end{theorem}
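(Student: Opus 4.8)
The plan is to split the elapsed time as $t=t_1+t_2$ with $t_1=(4\sob)^{-1}|\log\log(1/\pi(y_0))|^+$ and $t_2=\gap^{-1}s$, contracting from the point mass at $y_0$ by hypercontractivity on $[0,t_1]$ and then towards $\pi$ by the spectral gap on $[t_1,t]$. Write $h_u$ for the density of $\P_{y_0}(Y_u\in\cdot)$ with respect to $\pi$, so that $h_0=\one_{y_0}/\pi(y_0)$ and, by reversibility, $h_{u+v}=H_v h_u$ where $H_v=e^{v\mathscr{L}}$ is the self-adjoint Markov semigroup; the quantity to be bounded is exactly $\|h_t-1\|_{L^2(\pi)}$.

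The main ingredient is Gross's equivalence between the log-Sobolev inequality and hypercontractivity, which in the normalization of~\eqref{eq-dirichlet-form} reads: the bound $\operatorname{Ent}(f)\le\sob^{-1}\mathscr{E}(f)$ for all $f$ is equivalent to $\|H_v f\|_{L^q(\pi)}\le\|f\|_{L^p(\pi)}$ whenever $1<p\le q<\infty$ and $q-1\le e^{4\sob v}(p-1)$. I would invoke this (e.g.\ from~\cite{SaloffCoste}) rather than reprove it. Applying it with $q=2$ and $v=t_1$ forces $p=1+e^{-4\sob t_1}\in(1,2]$, so that
\[
\|h_{t_1}\|_{L^2(\pi)}=\|H_{t_1}h_0\|_{L^2(\pi)}\le\|h_0\|_{L^p(\pi)}=\pi(y_0)^{(1-p)/p}\,.
\]
Taking logs gives $\log\|h_0\|_{L^p(\pi)}=\tfrac{e^{-4\sob t_1}}{1+e^{-4\sob t_1}}\log(1/\pi(y_0))\le e^{-4\sob t_1}\log(1/\pi(y_0))$, and the choice of $t_1$ — in the regime $\pi(y_0)\le e^{-1}$, where $e^{-4\sob t_1}=1/\log(1/\pi(y_0))$ — makes the right-hand side at most $1$. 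Hence $\|h_{t_1}\|_{L^2(\pi)}\le e$, and since $h_{t_1}$ is a probability density, $\|h_{t_1}-1\|_{L^2(\pi)}^2=\|h_{t_1}\|_{L^2(\pi)}^2-1\le e^2$.

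For the second phase, $h_t-1=H_{t_2}(h_{t_1}-1)$ since $H_{t_2}1=1$, and $h_{t_1}-1$ is orthogonal to the constants in $L^2(\pi)$; the Poincaré inequality (equivalently, the spectral decomposition of the reversible generator) then gives $\|H_{t_2}g\|_{L^2(\pi)}\le e^{-\gap t_2}\|g\|_{L^2(\pi)}$ for mean-zero $g$, whence
\[
\|h_t-1\|_{L^2(\pi)}\le e^{-\gap t_2}\,\|h_{t_1}-1\|_{L^2(\pi)}\le e^{-\gap t_2}\cdot e=e^{1-s}\,.
\]
The residual case $\pi(y_0)>e^{-1}$ is immediate: there $t_1=0$, $\|h_0-1\|_{L^2(\pi)}^2=\pi(y_0)^{-1}-1<e$, and the same spectral-gap contraction over $[0,t_2]$ yields $\|h_t-1\|_{L^2(\pi)}\le e^{-s}\sqrt{e}\le e^{1-s}$.

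The only nontrivial input is Gross's theorem, together with tracking the numerical constants consistently with the convention $\operatorname{Ent}(f)=\E[f^2\log(f^2/\E f^2)]$ in~\eqref{eq-dirichlet-form}: the factor $4$ in $t_1=(4\sob)^{-1}\log\log(1/\pi(y_0))$ is precisely what forces $\log\|h_0\|_{L^p(\pi)}\le1$, and this is the consistency check that pins everything down. Once the hypercontractive estimate is granted, the proof is a single interpolation step, an explicit $L^p(\pi)$-norm computation for the point mass, and one application of the Poincaré inequality.
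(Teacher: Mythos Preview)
The paper does not prove this theorem; it is quoted as a known result of Diaconis and Saloff-Coste~\cite{DS}*{Theorem~3.7} (in the form of~\cite{SaloffCoste}*{Theorem~2.2.5}). Your argument is correct and is precisely the standard proof found in those references: hypercontractivity from the log-Sobolev inequality to bring $\|h_{t_1}\|_{L^2(\pi)}$ down to $e$, followed by exponential contraction at rate $\gap$ on the orthogonal complement of the constants. The constant-tracking is right, including the factor of $4$ in the hypercontractivity exponent under the convention $\operatorname{Ent}(f)=\E[f^2\log(f^2/\E f^2)]$, and the case split at $\pi(y_0)=e^{-1}$ is handled cleanly.
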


Starting from the late 1970's, a series of seminal papers by Aizenman, Dobrushin, Holley, Shlosman, Stroock et al.\
(see, e.g., \cites{AH,DobShl,Holley,HoSt1,HoSt2,Liggett,LY,MO,MO2,MOS,SZ1,SZ2,SZ3,Zee1,Zee2})
has developed the theory of the convergence rate of Glauber dynamics to stationarity
at the high temperature regime.
For more details on these works the reader is referred to the companion paper~\cite{LS1} as well as to~\cites{LScritical,Martinelli97,Martinelli04}, while in what follows we describe the result of Martinelli and Olivieri~\cites{MO,MO2} on the log-Sobolev constant, which is essential to our proofs.

\subsection{Strong spatial mixing and logarithmic-Sobolev inequalities}\label{sec:prelim-ssm}

Bounds on the log-Sobolev constant of the Glauber dynamics for the Ising model (as well as other spin systems) were
proved under a variety of increasingly general spatial mixing conditions.
We will work under the assumption of strong spatial mixing (or regular complete analyticity) introduced by Martinelli and Olivieri~\cite{MO}
as it holds for the largest known range of $\beta$.
\begin{definition}
For a set $\Lambda\subset \Z^d$ and constants $C,c>0$ we say that the property $\mathtt{SM}(\Lambda,C,c)$ holds if for any $\Delta\subset \Lambda$,
\[
\sup_{\tau,y} \ \Big\| \mu_\Lambda^\tau|_\Delta - \mu_\Lambda^{\tau^y}|_\Delta  \Big\|_{\tv} \leq C \mathrm{e}^{-c
\dist(y,\Delta)}\,,
\]
where the supremum is over all $y \in \partial \Lambda$ and $\tau \in \{\pm1\}^{\partial\Lambda}$ and where $\mu_\Lambda^\tau|_\Delta$ is the projection of the measure $\mu_\Lambda^\tau$ onto $\Delta$.
We say that \emph{strong spatial mixing} holds
for the Ising model with inverse temperature $\beta$ and external field $h$ on $\Z^d$ if there exist $C,c>0$ such that $\mathtt{SM}(\Lambda,C,c)$ holds for any \emph{cube} $\Lambda$.
\end{definition}
The above definition implies uniqueness of the Gibbs measure on the infinite lattice.
Moreover, strong spatial mixing holds for all temperatures when $d=1$ and for $d=2$ it holds whenever $h\neq 0$ or $\beta<\beta_c$.
As discussed in the introduction, this condition further implies a uniform lower bound on the log-Sobolev constant
of the Glauber dynamics on cubes under any boundary condition $\tau$ (see \cites{MO,MO2,Martinelli97}).
The proof of Theorem~\ref{mainthm-plusBC2d} will make use of the next generalization of this result to periodic boundary conditions,
i.e.\ the dynamics on the torus, obtained by following the original arguments as given in \cite{Martinelli97} with minor alterations
(see also \cites{Cesi,GZ,LY,Martinelli04}).

\begin{theorem}\label{thm-log-sobolev-torus}
Suppose that the inverse-temperature $\beta$ and external field $h$ are such that
the Ising model on $\Z^d$ has strong spatial mixing. Then there exists a constant
$\sobinf=\sobinf(\beta,h) > 0$ such that the Glauber dynamics for the Ising model on
$(\Z/n\Z)^d$ with any combination of arbitrary and periodic boundary conditions has a log-Sobolev constant  at least $\sobinf$ independent of $n$.
\end{theorem}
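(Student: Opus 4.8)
The plan is to adapt the recursive derivation of a uniform log-Sobolev bound from strong spatial mixing due to Martinelli and Olivieri, in the form presented in \cite{Martinelli97}, to the periodic geometry. The engine of that argument is a quasi-factorization of the entropy (Cesi \cite{Cesi}): if a region $\Lambda$ is written as $\Lambda=\Lambda_1\cup\Lambda_2$ with the overlap $\Lambda_1\cap\Lambda_2$ a ``buffer'' separating the two cores $\Lambda_1\setminus\Lambda_2$ and $\Lambda_2\setminus\Lambda_1$ by at least $\ell$, then strong spatial mixing yields, for every boundary condition $\tau$ and every $f$,
\[
\operatorname{Ent}_{\mu_\Lambda^\tau}(f^2)\ \le\ (1-\delta_\ell)^{-1}\,\mu_\Lambda^\tau\!\Big[\operatorname{Ent}_{\mu_{\Lambda_1}^{\cdot}}(f^2)+\operatorname{Ent}_{\mu_{\Lambda_2}^{\cdot}}(f^2)\Big],\qquad \delta_\ell\le C\mathrm{e}^{-c\ell}\,.
\]
Iterating this until the blocks are of some fixed size (where the log-Sobolev constant is bounded below by a constant depending only on $\beta,h,d$), and then summing the small-block Dirichlet forms back into the global form $\mathscr{E}$ --- each site and bond being counted only $O(1)$ times --- produces $\operatorname{Ent}_{\mu}(f^2)\le K\,\mathscr{E}(f)$, where $K$ is a product of factors $(1-\delta_{\ell})^{-1}$ over the scales used and stays bounded provided the buffer widths grow suitably along the scales. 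For a box in $\Z^d$ this is exactly the Martinelli--Olivieri theorem \cites{MO,MO2,Martinelli97}; the task here is to run it on a box that is periodic in a subset of the coordinates.

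Two inputs must be arranged in the periodic geometry. First, strong spatial mixing with constants independent of $n$ for the Gibbs measures on the regions that arise in the recursion, namely products of intervals with lower-dimensional tori (``partial tori''). Since strong spatial mixing is the assertion that the influence of a single boundary spin decays exponentially --- a property propagated by a purely local mechanism, and any patch of a partial torus of diameter at most $n/3$ is graph-isomorphic, with matching interactions and fields, to a patch of $\Z^d$ --- it transfers from $\Z^d$-cubes to partial tori with the same constants $C,c$; concretely one re-runs the standard bootstrap for $\mathtt{SM}$ (as in \cites{MO,MO2,Martinelli97}) with the periodic directions carried along, which is the ``minor alteration'' referred to in the statement. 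Second, given this, Cesi's proof of the quasi-factorization above goes through verbatim when $\Lambda$ is a partial torus cut perpendicular to one of its periodic directions into two overlapping slabs (with a buffer on each side of the cut), each slab being a partial torus periodic in one fewer coordinate.

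With both inputs in hand I would unwrap the periodic directions one at a time: in each round cut every current piece along a remaining periodic direction into two slabs with a buffer width $\ell$ fixed large enough that $\delta_\ell<\tfrac12$. After at most $d$ rounds --- a bounded number, each doubling the number of pieces --- the original region is a union of at most $2^d$ genuine parallelepipeds $\Lambda^{(j)}\subset\Z^d$, at total multiplicative cost $(1-\delta_\ell)^{-d}=O_d(1)$. Each $\Lambda^{(j)}$ is a box, of side $O(n)$, and the classical Martinelli--Olivieri bound is uniform over all box sizes and all boundary conditions under strong spatial mixing, so $\sob(\Lambda^{(j)})\ge\alpha_0>0$ and hence $\operatorname{Ent}_{\mu^{\cdot}_{\Lambda^{(j)}}}(f^2)\le\alpha_0^{-1}\mathscr{E}_{\Lambda^{(j)}}(f)$; summing over $j$, using that each site lies in $O_d(1)$ of the $\Lambda^{(j)}$, gives $\operatorname{Ent}_{\mu}(f^2)\le C(d)\,\alpha_0^{-1}\,\mathscr{E}(f)$ on the full (partial) torus, i.e.\ a log-Sobolev constant at least $\sobinf:=c(d)\,\alpha_0>0$, independent of $n$ and of which coordinates carry periodic versus arbitrary boundary data. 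I expect the one genuinely non-routine point to be the first input of the second paragraph --- certifying that strong spatial mixing, and hence the quasi-factorization, holds with $n$-independent constants in the periodic setting; once that is secured the remainder is the standard recursion applied a bounded number of times.
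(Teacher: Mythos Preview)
The paper does not actually prove this theorem: it is stated without proof, accompanied only by the remark that it is ``obtained by following the original arguments as given in \cite{Martinelli97} with minor alterations (see also \cites{Cesi,GZ,LY,Martinelli04}).'' Your proposal is precisely a fleshing-out of what those minor alterations are --- Cesi's quasi-factorization of entropy plus the Martinelli--Olivieri recursion, with the periodic directions unwrapped one at a time into overlapping slabs until only genuine boxes remain --- and is consistent both with the references the paper cites and with the spirit of the claimed modification. In short, there is nothing to compare against beyond the citation, and your sketch matches it; your own caveat that the non-routine point is transferring $\mathtt{SM}$ with $n$-independent constants to partial tori is well placed, and the paper's parenthetical ``minor alterations'' is exactly where that work would sit.
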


\section{Mixing in $L^1$ and $L^2$}\label{sec-l1-l2}

\subsection{Proof of Proposition~\ref{prop-l1-l2}}
To simplify the notation throughout the proof we write $\nu = \P(X_t \in \cdot)$ and $\nu_i = \P(X_t^i \in \cdot)$ for $i=1,\ldots,n$.

We will first show that $\left\|\nu - \pi\right \|_\tv \leq \sqrt{\ltwo_t}$. Note that by Cauchy-Schwarz,
\[ \| \nu - \pi \|_\tv = \frac12 \| \nu-\pi\|_{L^1(\pi)} \leq
\frac12 \| \nu-\pi\|_{L^2(\pi)}\,,\]
and at the same time, letting $\E_\pi$ denote expectation w.r.t.\ $\pi$, we have
\[ \| \nu-\pi\|_{L^2(\pi)}^2 = \E_\pi \left|\frac{\nu}{\pi}-1\right|^2 = \E_\pi \left|\frac{\nu}{\pi}\right|^2 - 1\,.\]
Since $\nu$ and $\pi$ are both product-measures,
\[ \E_\pi \left|\frac{\nu}{\pi}\right|^2 = \prod_{i=1}^n \E_{\pi_i}\left|\frac{\nu_i}{\pi_i}\right|^2
= \prod_{i=1}^n \left(1+ \|\nu_i-\pi_i\|^2_{L^2(\pi_i)}\right) \leq \exp(\ltwo_t)\,,\]
and we may conclude that
\[\| \nu - \pi \|_\tv \leq \frac12 \sqrt{\exp(\ltwo_t)-1} \,.\]
It is now clear that $|\nu-\pi\|_\tv \to 0$ as $\ltwo_t\to 0$, though a more useful relation between these quantities is the following.
When $\ltwo_t\geq 1$ put $\|\nu-\pi\|_\tv \leq 1$ trivially by the definition
of total-variation distance, while elsewhere one has $\frac12\sqrt{\mathrm{e}^x-1} \leq \sqrt{x}$ for $x \in [0,1]$. Hence,
\begin{equation}
  \label{eq-nu-pi-sqrt(mt)}
  \| \nu - \pi \|_\tv \leq \sqrt{\ltwo_t}\,.
\end{equation}

Next, we assume that~\eqref{eq-product-assumption} holds towards deriving the full form of~\eqref{eq-product-limit}.
It is worthwhile mentioning that to this end we may assume that $\ltwo_t \geq c_\delta$ for some $c_\delta >0$,
as otherwise one can infer~\eqref{eq-product-limit} by selecting a suitably small $c_\delta$ such that $2\Phi(\sqrt{\ltwo_t}/2)-1 < \delta/2$
while $\|\nu-\pi\|_\tv < \delta/2$ via~\eqref{eq-nu-pi-sqrt(mt)}.

Let $U_1,\ldots,U_n$ denote independent random variables drawn from $\pi_1,\ldots,\pi_n$ respectively and define
\begin{equation}
Y_{i}= Y_i(t) = \nu_i(U_i) / \pi_i(U_i)\,.
\end{equation}
Clearly by definition $\E Y_i = \sum_{x} \nu_i(x) = 1$ whereas
\begin{align*}
\var(Y_i) &= \sum_{x} \bigg | \frac{ \nu_i(x) }{ \pi_i(x)}-1 \bigg|^2 \pi_i(x) = \left\|\nu_i-\pi_i \right\|^2_{L^2(\pi_i)}.
\end{align*}
Moreover, the hypothesis~\eqref{eq-product-assumption} implies that $\|Y_i-1\|_\infty < \epsilon$ for all $1\leq i \leq n$. In particular,
\begin{align*}
\E |Y_i - 1|^3 \leq  \|Y_i - 1\|_\infty  \var(Y_i)  < \epsilon \var(Y_i)\,.
\end{align*}
Furthermore, noting that $Y_i>0$ with probability $1$ define $Z_i = \log Y_i$. By considering the Taylor series expansion of $Z_i$ we obtain that
\begin{align*}
  \E Z_i &= \E (Y_i - 1) - \tfrac12 \E (Y_i-1)^2 + O\left( \E |Y_i-1|^3  \right) = -\frac{1+O(\epsilon)}2 \var(Y_i)
\end{align*}
and similarly
\begin{align*}
  \E Z_i^2 &= \E  (Y_i - 1)^2 + O\left( \E |Y_i-1|^3  \right) = (1+O(\epsilon)) \var(Y_i)\,,
\end{align*}
where here and through the remainder of this proof the implicit constant in the $O(\cdot)$-notation is absolute.
The random variables $Z_i$ are independent and $\|Z_i\|_\infty = O(\epsilon)$ by~\eqref{eq-product-assumption}, hence the Berry-Esseen Theorem implies that
\[\sup_x \left| \P\left(\frac{\sum_{i=1}^n (Z_i -\E Z_i) }
{\sqrt {\sum_{i=1}^n \var(Z_i)}} < x\right) - \Phi(x)\right| < \frac{O(\epsilon)}{\sqrt{\ltwo_t}}\,.\]
Recalling definition~\eqref{eq-product-m-def} and that
$\var(Y_i) < \epsilon^2$ due to~\eqref{eq-product-assumption},
we have that
\begin{align}
\begin{array}{c}\mu\deq\sum_{i=1}^n \E Z_i = -(\tfrac12+O(\epsilon))\ltwo_t\,,\\
\noalign{\medskip}
\sigma^2\deq\sum_{i=1}^n \var(Z_i) = (1+O(\epsilon))\ltwo_t\,,
\end{array}\label{eq-ltwo-mu-sigma}
\end{align}
and altogether we find that for a suitably small $\epsilon=\epsilon(\delta)>0$ the distribution of $\sum_{i=1}^n Z_i$ becomes arbitrarily close to that of a Gaussian $\cN(\mu,\sigma^2)$.

To relate this variable to the $L^1$-distance of $\nu$ from $\pi$ observe that
\begin{align*}
  \left\|\nu - \pi\right \|_\tv
  &= \sum_{x_1,\ldots,x_n}
  \left| \nu(x_1,\ldots,x_n)-\pi(x_1,\ldots,x_n)\right|^- \\
& =  \sum_{x_1,\ldots,x_n}
\bigg| \prod_{i=1}^{n} \frac{\nu_i(x_i)}{\pi_i(x_i)}-1\bigg|^- \prod_{i=1}^{n} \pi_i(x_i) \\
& = \E \bigg| \prod_{i=1}^{n} Y_i -1\bigg|^-
 = \E \bigg|  \exp\bigg(\sum_{i=1}^{n} Z_i \bigg) -1\bigg|^{-}\,,
\end{align*}
where $|a|^{-}$ denotes $\max\{-a,0\}$.  Using the above argument we can now express the total-variation distance via a log-normal random variable, namely for every $\delta>0$ one can choose $\epsilon>0$ small enough such that
\begin{align}\label{eq-log-normal}
\left|\, \left\|\nu - \pi\right \|_\tv - \E\left|W-1\right|^-\,\right| < \delta/2\,,\quad\mbox{where}\quad \log W\sim \cN(\mu,\sigma^2)\,.
 \end{align}
Revisiting~\eqref{eq-ltwo-mu-sigma} we see that $\cN(\mu,\sigma^2)$ concentrates around $(-\frac12+O(\epsilon))\ltwo_t$. Hence, there is some large enough $B_\delta>0$ such that
if $\epsilon<1/B_\delta$ and $\ltwo_t> B_\delta$ then $\E|W-1|^- > 1- \frac{\delta}2$ and at the same time $2\Phi(\sqrt{\ltwo_t}/2)-1 > 1 - \frac\delta2$.

It is thus left to deal with the case where $\ltwo_t < B_\delta$. By choosing $\epsilon$ small enough we can
now let $\mu$ and $\sigma^2$ tend arbitrarily close to $-\frac12\ltwo_t$ and $\ltwo_t$, respectively, and consequently
rewrite~\eqref{eq-log-normal} with $\log W\sim \cN(-\frac12\ltwo_t,\ltwo_t)$.
The proof is then concluded by noting that
\begin{align*}
\E\left|W-1\right|^- &= \frac1{\sqrt{2\pi \ltwo_t}} \int_{-\infty}^0 (1-\mathrm{e}^x) \mathrm{e}^{-\frac{(x+\frac12\ltwo_t)^2}{2\,\ltwo_t}}dx
 \\
&= \P\left(\cN(-\tfrac12\ltwo_t,\ltwo_t) \leq 0\right) - \P\left(\cN(\tfrac12\ltwo_t,\ltwo_t) \leq 0\right) \\
&= 2\Phi(\sqrt{\ltwo_t}/2)-1\,,
 \end{align*}
 as required.
\qed

Using a standard Riesz-Thorin interpolation bound for the $L^\infty$-distance from equilibrium of a reversible Markov chain at time $t>0$ via the analogous $L^2$-distance at time $t/2$ we can now write down the following corollary specializing the above proposition to the case of a product of i.i.d.\ chains.
\begin{corollary}\label{cor-products}
Let $X_t=X_t^{(n)}$ denote a sequence of product chains
with stationary measures $\pi=\pi^{(n)}$, where the $n$-th chain in the sequence is made of $n$ i.i.d.\ copies of some finite ergodic Markov chain $Y_t=Y_t^{(n)}$.
Let $\phi$ be the stationary measure of $Y_t$, put $\phi_{\min} = \min_x \phi(x)$ and
let $\sob$ and $\gap$ denote the log-Sobolev constant and spectral gap of $Y_t$, respectively. If
\[ \log \phi_{\min}^{-1} \leq n^{o(\sob / \gap)}\]
 then $X_t$ has cutoff at $\frac1{2\, \gap}\log n$ with window $O(\gap^{-1} + \sob^{-1} |\log \log \phi_{\min}^{-1}|^+)$.
\end{corollary}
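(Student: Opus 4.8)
The plan is to feed the single building-block chain $Y_t$ into Theorem~\ref{thm-l2-sobolev} together with a spectral-gap lower bound, and then transfer both estimates to the product chain via Proposition~\ref{prop-l1-l2}. Throughout write $h_t^{y_0}(y)=\P_{y_0}(Y_t=y)/\phi(y)$, so $\|h_t^{y_0}-1\|_{L^2(\phi)}=\|\P_{y_0}(Y_t\in\cdot)-\phi\|_{L^2(\phi)}$, and note $Y_t$ is reversible (as speaking of its log-Sobolev constant presupposes). Since $X_t$ is a product of i.i.d.\ copies of $Y_t$ and the functional $\ltwo_t$ of~\eqref{eq-product-m-def} is a sum over coordinates, the initial state of the product chain maximizing $\ltwo_t$ places every coordinate at the single-chain $L^2$-maximizer, so $\max_{\sigma_0}\ltwo_t(\sigma_0)=n\max_{y_0}\|h_t^{y_0}-1\|^2_{L^2(\phi)}$; this is the quantity I must push above and below the constant $1$. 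Put $t_n=\tfrac1{2\gap}\log n$ and $w_n=\gap^{-1}+\sob^{-1}\bigl|\log\log\phi_{\min}^{-1}\bigr|^+$. The first point to record is that the hypothesis $\log\phi_{\min}^{-1}\le n^{o(\sob/\gap)}$ is exactly the statement $(\gap/\sob)\bigl|\log\log\phi_{\min}^{-1}\bigr|^+=o(\log n)$, whence $w_n=o(\gap^{-1}\log n)=o(t_n)$ --- so ``cutoff at $t_n$ with window $w_n$'' is well-posed --- and, moreover, at any time of the form $t_n\pm\gamma w_n$ with $\gamma$ fixed and $n$ large one has $t/2>(4\sob)^{-1}\bigl|\log\log\phi_{\min}^{-1}\bigr|^+$, which is needed to apply Theorem~\ref{thm-l2-sobolev}.

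For the upper bound, Theorem~\ref{thm-l2-sobolev} applied to $Y_t$ (using $\phi(y_0)\ge\phi_{\min}$ and the monotonicity of $t\mapsto\|h_t^{y_0}-1\|_{L^2(\phi)}$) gives, uniformly over $y_0$, at time $t=(4\sob)^{-1}\bigl|\log\log\phi_{\min}^{-1}\bigr|^++\gap^{-1}s$ the bound $\|h_t^{y_0}-1\|_{L^2(\phi)}\le e^{1-s}$, hence $\ltwo_t\le n\,e^{2(1-s)}$. Choosing $s$ so that this time equals $t_n+\gamma w_n$ yields $s=\tfrac12\log n+\gamma\gap w_n-\tfrac{\gap}{4\sob}\bigl|\log\log\phi_{\min}^{-1}\bigr|^+$, and since $\gap w_n=1+\tfrac{\gap}{\sob}\bigl|\log\log\phi_{\min}^{-1}\bigr|^+\ge1$ this is $\ge\tfrac12\log n+\gamma$ once $\gamma\ge\tfrac14$; thus $\ltwo_{t_n+\gamma w_n}\le e^{2-2\gamma}$, and the first assertion of Proposition~\ref{prop-l1-l2} bounds $\max_{\sigma_0}\|\P_{\sigma_0}(X_{t_n+\gamma w_n}\in\cdot)-\pi\|_\tv$ by $\sqrt{\ltwo_{t_n+\gamma w_n}}\le e^{1-\gamma}\to0$ as $\gamma\to\infty$.

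For the lower bound I would expand $h_t^{y_0}$ in an $L^2(\phi)$-orthonormal eigenbasis $1=f_1,f_2,\dots$ of minus the generator of $Y_t$, with eigenvalues $0<\gap=\lambda_2\le\lambda_3\le\cdots$, giving $\|h_t^{y_0}-1\|^2_{L^2(\phi)}=\sum_{j\ge2}e^{-2\lambda_j t}f_j(y_0)^2\ge e^{-2\gap t}f_2(y_0)^2$; as $\sum_{y_0}f_2(y_0)^2\phi(y_0)=1$ some $y_0$ has $f_2(y_0)^2\ge1$, so the product chain with every coordinate started at that $y_0$ satisfies $\ltwo_t\ge n\,e^{-2\gap t}$, which at $t=t_n-\gamma w_n$ equals $e^{2\gamma\gap w_n}\ge e^{2\gamma}$. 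Because $\ltwo_t$ stays bounded inside the window, the crude ``$\ltwo_t\to\infty$'' clause of Proposition~\ref{prop-l1-l2} is unavailable, so I must verify its $L^\infty$ hypothesis~\eqref{eq-product-assumption} instead. This is where the standard $L^2\!\to\!L^\infty$ bound for reversible chains enters: reversibility and the semigroup property give $h_t^{y_0}(y)-1=\langle h_{t/2}^{y_0}-1,\,h_{t/2}^{y}-1\rangle_{L^2(\phi)}$, so by Cauchy--Schwarz $\|h_t^{y_0}-1\|_{L^\infty(\phi)}\le\bigl(\max_{y'}\|h_{t/2}^{y'}-1\|_{L^2(\phi)}\bigr)^2$, and bounding the inner $L^2$-distance by Theorem~\ref{thm-l2-sobolev} at time $t/2$ leaves an exponent $s''=\bigl(\tfrac14-o(1)\bigr)\log n-O(\gamma)$, so $\max_i\|\P(X_t^i\in\cdot)-\phi\|_{L^\infty(\phi)}=\|h_t^{y_0}-1\|_{L^\infty(\phi)}\le e^{2(1-s'')}\to0$ as $n\to\infty$ for each fixed $\gamma$. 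Feeding $\ltwo_t\ge e^{2\gamma}$ and this vanishing $\epsilon$ into the quantitative conclusion~\eqref{eq-product-limit} of Proposition~\ref{prop-l1-l2} gives $\liminf_{n\to\infty}\max_{\sigma_0}\|\P_{\sigma_0}(X_{t_n-\gamma w_n}\in\cdot)-\pi\|_\tv\ge 2\Phi(e^{\gamma}/2)-1=\Erf\bigl(e^{\gamma}/\sqrt8\bigr)\to1$ as $\gamma\to\infty$. Combining the two bounds is exactly cutoff at $t_n$ with window $w_n$, which in particular gives $\tmix(\epsilon)-\tmix(1-\epsilon)=O_\epsilon(w_n)$.

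The hard part is not any individual estimate but the bookkeeping: one must pick a single $w_n$ that simultaneously absorbs the $\sob^{-1}\bigl|\log\log\phi_{\min}^{-1}\bigr|^+$ time-offset built into Theorem~\ref{thm-l2-sobolev} and the $\gap^{-1}$ relaxation scale, and verify that $\log\phi_{\min}^{-1}\le n^{o(\sob/\gap)}$ is precisely the condition making $w_n=o(t_n)$ --- without it cutoff would not even be well-posed. The only step that goes beyond routine manipulation is the $L^2\!\to\!L^\infty$ passage in the lower bound, which is forced because $\ltwo_t$ remains bounded throughout the cutoff window: one genuinely needs the $L^\infty$ control demanded by~\eqref{eq-product-assumption}, rather than being able to fall back on the simpler asymptotic form of Proposition~\ref{prop-l1-l2}.
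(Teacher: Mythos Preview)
Your proof is correct and follows essentially the same route as the paper: both feed Theorem~\ref{thm-l2-sobolev} and the spectral-gap lower bound into Proposition~\ref{prop-l1-l2}, using the $L^2\!\to\!L^\infty$ interpolation to verify~\eqref{eq-product-assumption}. The only cosmetic differences are that the paper takes an asymmetric window $t^-=t^\star-\gamma\gap^{-1}$, $t^+=t^\star+\gamma\gap^{-1}+(4\sob)^{-1}|\log\log\phi_{\min}^{-1}|^+$ rather than your symmetric $t_n\pm\gamma w_n$, and invokes the full $\Phi$-form of~\eqref{eq-product-limit} for both directions rather than the cruder $\sqrt{\ltwo_t}$ bound you use for the upper bound.
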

\begin{proof}
Set $t^\star = \frac1{2\,\gap} \log n$ and define the following time interval for $\gamma > 0$:
\begin{align*}
t^-=t^-(\gamma) &= t^\star - \gamma\, \gap^{-1}\,,\\
t^+=t^+(\gamma) &= t^\star   + \gamma\, \gap^{-1} + (4\sob)^{-1} |\log \log \phi_{\min}^{-1}|^+\,,
\end{align*}
while noting that for any fixed $\gamma>0$ we have $t^- = (1-o(1))t^\star$ and in addition $t^+ = (1+o(1))t^\star$ thanks to the hypothesis $\log \phi_{\min}^{-1} = n^{o(\sob/\gap)}$.

Following the notation of Proposition~\ref{prop-l1-l2}, each $X_t^i$ is an independent copy of $Y_t$, and we start by verifying the condition~\eqref{eq-product-assumption}. This will follow from the well-known fact (see, e.g.,~\cite{SaloffCoste}*{Eq.~(2.4.7)}) that if $Y_t$ is a reversible Markov chain with stationary distribution $\phi$
then for any $t>0$ one has
\[ \|\P(Y_t \in \cdot)-\phi\|_{L^\infty(\phi)} \leq
\|\P(Y_{t/2} \in \cdot)-\phi\|^2_{L^2(\phi)}\,.\]
Combining the hypothesis $\log \phi_{\min}^{-1} = n^{o(\sob/\gap)}$ with Theorem~\ref{thm-l2-sobolev}, it now follows
that for any $t \asymp \gap^{-1} \log n$ we have
\[  \max_{y_0}\|\P(Y_t \in \cdot)-\phi\|_{L^2(\phi)} \leq \exp\big(1-(1-o(1))t\,\gap\big)\,,\]
We conclude from the last two inequalities that for any $t\geq t^{-}$
\[ \max_{y_0}\|\P(Y_t \in \cdot)-\phi\|_{L^\infty(\phi)} \leq n^{-1/2+o(1)} = o(1)\,,\]
thus satisfying the prerequisite~\eqref{eq-product-assumption} for any $\epsilon>0$. Consequently, by~\eqref{eq-product-limit}
\[ |\P(X_t\in\cdot)-\pi|_\tv = 2\Phi(\sqrt{\ltwo_t}/2)-1 + o(1) \] and it is left to evaluate $ \ltwo_t = n \max_{y_0}\left\|\P_{y_0}(Y_t\in\cdot)-\phi\right\|^2_{L^2(\phi)}$
at $t^-$ and $t^+$.

For $t^-$, the standard lower bound on $L^2$-distance via the spectral gap (cf., e.g.,~\cite{LPW}) implies that
$ \max_{y_0} \left \| \P(Y_{t} \in\cdot)- \phi\right \|_{L^2(\phi)} \geq \exp(-t \, \gap) $, thus
$ \ltwo_{t^-} \geq \exp(\gamma)$, and so $|\P(X_{t^-}\in\cdot)-\pi|_\tv \to 1$ as $\gamma\to\infty$.

For $t^+$, another application of the log-Sobolev inequality in Theorem~\ref{thm-l2-sobolev} implies that
$\ltwo_{t^+} \leq \exp(2-2\gamma)$, thus $|\P(X_{t^+}\in\cdot)-\pi|_\tv \to 0$ as $\gamma\to\infty$.

Altogether, cutoff occurs at $t^\star$ with window $O(t^+-t^-)$, as required.
\end{proof}

\begin{remark*}
Corollary~\ref{cor-products} gives insight to the behavior of the Ising model on the torus $(\Z/n\Z)^d$.
Consider the (much simplified) model where the torus is partitioned
into boxes of side-length, say, $m=\log n$, and each box evolves independently with its own periodic boundary conditions
(all long range interactions are completely absent from this model).
Observe the following:
\begin{compactitem}[\indent$\bullet$]
  \item The dynamics is a product of $N=(n/m)^d=n^{d-o(1)}$ i.i.d.\ copies of Glauber dynamics for the Ising model on $(\Z/m\Z)^d$, denoted by $(Y_t)$.
  \item Let $\phi$ denote the stationary measure of $(Y_t)$ and note that its state space contains $2^{m^d}$ configurations and so $\log \phi_{\min}^{-1} = O(m^d) = n^{o(1)}$.
  \item The log-Sobolev constant of $(Y_t)$ is known to be uniformly bounded away from $0$, hence in particular $\log \phi_{\min}^{-1} = N^{o(\sob/\gap)}$.
\end{compactitem}
Appealing to Corollary~\ref{cor-products} we conclude that there is cutoff at $(2\lambda_m)^{-1}\log n$ with window $O(\log\log n)$, where $\lambda_m$ is the gap of the dynamics on $(\Z/m\Z)^d$.
Given that $\lambda_m$ converges to the spectral-gap of the infinite-volume dynamics as $m\to\infty$ (as was indeed shown in~\cite{LS1}), this picture of cutoff coincides with the actual behavior of the Ising model on the torus proved in~\cite{LS1}.
\end{remark*}

Note that, as the Ising model does of course feature long range interactions (albeit weak ones at high temperatures), a significant part of the proof will entail controlling these interactions to enable an application of Proposition~\ref{prop-l1-l2}.

\subsection{Supports of random maps}
In what follows we place Lemma 3.8 of~\cite{LS1} in a general context.
Recall that if $K$ is a transition kernel of a finite Markov chain then a \emph{random mapping representation} for $K$ is a pair $(g,W)$ where $g$ is a deterministic map and $W$ is a random variable such that $\P(g(x,W)=y)=K(x,y)$ for all $x,y$ in the state space of $K$. It is well-known (and easy to see) that such a representation always exists.

\begin{definition}[\emph{Support of a random mapping representation}]\label{def-mc-support} Let $K$ be a Markov chain on a state space $\Sigma^V$ for some finite sets $\Sigma$ and $V$. Let $(g,W)$ be a random mapping representation for $K$. The \emph{support} corresponding to $g$ for a given value of $W$ is the minimum subset $\Lambda_{W} \subset V$ such that $g(\cdot,W)$ is determined by $x(\Lambda_{W})$ for any $x$, i.e.,
\[ g(x,W) = f_{W}(x(\Lambda_{W})) \mbox{ for some $f_{W}:\Sigma^{\Lambda_{W}}\to \Sigma^{V}$ and all $x$.}\]
That is, $v \in \Lambda_{W}$ if and only if there exist some $x,x'\in \Sigma^V$ differing only at coordinate $v$ such that $g(x,W)\neq g(x',W)$.
\end{definition}

\begin{lemma}\label{lem-support} Let $K$ be a finite Markov chain and let $(g,W)$ be a random mapping representation for it. Denote by $\Lambda_{W}$ the support of $W$ w.r.t.\ $g$ as per Definition~\ref{def-mc-support}. Then for any distributions $\phi,\psi$ on the state space of $K$,
\begin{align*}
\left\| \phi K -\psi K\right\|_\tv &\leq \int \left\|\phi|_{\Lambda_{W}}-\psi|_{\Lambda_{W}} \right\|_\tv d \P({W}).
 \end{align*}
\end{lemma}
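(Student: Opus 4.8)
The plan is to use the random-mapping representation to express $\phi K$ as a mixture, over the randomness $W$, of pushforwards of $\phi$, and then to invoke two elementary properties of total-variation distance: it obeys a triangle inequality for mixtures (being half an $L^1$-norm), and it cannot increase under a deterministic map (the data-processing inequality).

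First I would fix notation: for a value $w$ in the range of $W$, write $\phi^w$ for the pushforward of $\phi$ under the deterministic map $x\mapsto g(x,w)$ on $\Sigma^V$, and likewise $\psi^w$. By the definition of a random mapping representation, drawing $X\sim\phi$ and an independent $W$ makes $g(X,W)\sim\phi K$; conditioning on $W=w$ this gives $\phi K=\int \phi^w\, d\P(w)$, and similarly $\psi K=\int\psi^w\,d\P(w)$. Writing $\|\cdot\|_\tv=\tfrac12\|\cdot\|_{L^1}$ and using that the integral of a signed-measure-valued function has $L^1$-norm at most the integral of the $L^1$-norms,
\[ \|\phi K-\psi K\|_\tv \;=\; \Big\|\int(\phi^w-\psi^w)\,d\P(w)\Big\|_\tv \;\le\; \int\|\phi^w-\psi^w\|_\tv\,d\P(w)\,. \]

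Next, for each fixed $w$ the definition of the support $\Lambda_w$ provides a map $f_w:\Sigma^{\Lambda_w}\to\Sigma^V$ with $g(x,w)=f_w(x(\Lambda_w))$ for all $x$. Thus $x\mapsto g(x,w)$ factors through the projection $x\mapsto x(\Lambda_w)$, so $\phi^w$ is the pushforward under $f_w$ of the marginal $\phi|_{\Lambda_w}$ on $\Sigma^{\Lambda_w}$, and $\psi^w$ is the pushforward under $f_w$ of $\psi|_{\Lambda_w}$. Since total-variation distance does not increase under applying a fixed function, $\|\phi^w-\psi^w\|_\tv\le\|\phi|_{\Lambda_w}-\psi|_{\Lambda_w}\|_\tv$ for every $w$; substituting this into the last display gives the lemma.

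There is essentially no obstacle here beyond careful bookkeeping; the only point meriting a word is the measurability of $w\mapsto\|\phi|_{\Lambda_w}-\psi|_{\Lambda_w}\|_\tv$ needed for the integral, which is automatic because $\Sigma$ and $V$ are finite and hence $\Lambda_w$ takes only finitely many values. If one prefers to avoid pushforward language, both steps can be carried out by hand: write $(\phi K)(y)-(\psi K)(y)=\int\big(\sum_{x:\,g(x,w)=y}(\phi(x)-\psi(x))\big)\,d\P(w)$, take $\tfrac12\sum_y|\cdot|$, move the sum inside the integral by the triangle inequality, and for fixed $w$ bound the resulting inner sum by grouping the configurations $x$ according to the value of $x(\Lambda_w)$.
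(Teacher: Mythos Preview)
Your proof is correct and follows essentially the same approach as the paper: represent $\phi K$ and $\psi K$ as averages over $W$ of deterministic pushforwards, move the total-variation bound inside the integral, and then use that $g(\cdot,w)$ factors through the projection onto $\Lambda_w$ so that the data-processing inequality reduces to the marginals. The only cosmetic difference is that the paper phrases the first step via the $\max_\Gamma$ characterization of total variation (swapping the max and the integral) rather than the $\tfrac12 L^1$ triangle inequality you use.
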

\begin{proof}
Let $\Omega$ denote the state space of $K$.
Following the definition of the support set $\Lambda_W$, let $f_{W}$ denote the deterministic map which satisfies $g(x,W) = f_{W}(x(\Lambda_{W}))$ for all $x \in \Omega$.

By definition of total-variation distance,
 \begin{align*}
\left\|\phi K - \psi K\right\|_\tv &=
  \max_{\Gamma\subset \Omega} \left[(\phi K)(\Gamma) -
(\psi K)(\Gamma)\right]\,.
\end{align*}
Let $X,Y$ be random variables distributed according to $\phi,\psi$ respectively. The above is then equal to
\begin{align*}
  \max_{\Gamma\subset \Omega} &\int\left[ \P\big(g(X,W)\in\Gamma\big) -\P\big(g(Y,W)\in \Gamma\big)\right]\,d\,\P({W}) \\
\leq &\int  \max_{\Gamma\subset \Omega} \left[ \P\big(f_{W}(X(\Lambda_{W}))\in\Gamma\big) -\P\big(f_{W}(Y(\Lambda_{W}))\in \Gamma\big)\right]\,d\,\P({W})\\
\leq &\int \left\| \P\big(X(\Lambda_{W})\in\cdot\big) - \P\big(Y(\Lambda_{W})\in\cdot\big)\right\|_\tv\,d\,\P({W})\\
= &\int \left\| \phi|_{\Lambda_{W}} - \psi|_{\Lambda_{W}}\right\|_\tv\,d\,\P({W})\,,
 \end{align*}
where the second inequality used the fact that when taking a projection of two measures their total-variation can only decrease.
\end{proof}


\section{Cutoff for on arbitrary graphs via log-Sobolev inequalities}\label{sec:arbitrary}

\subsection{Sparse supports}
The main step in proving Theorem~\ref{mainthm-arbitrary} would be to break up the dependencies between the spins and express the total-variation distance of the dynamics from equilibrium in terms of its projection on a collection of well-separated small clusters of spins, which we refer to as a \emph{sparse} set of spins. Thereafter the proof would proceed by showing that these clusters are essentially independent, justifying an application of Proposition~\ref{prop-l1-l2} to reduce $L^1$-mixing to $L^2$-mixing for this projection. We first define the notion of a sparse set. Throughout the proof, let $V$ denote the vertex set of $G$ and let $E$ denote its set of edges.
\begin{definition}[\emph{Sparse set}]\label{def-sparse-set}
We say the set $\Lambda \subset V$ is \emph{sparse} if it can be partitioned into (not necessarily connected) components $\{A_i\}$ so that
\begin{enumerate}
  [\indent\!\!\!1.]
  \item Each $A_i$ contains at most $\rho^3 \log n$ vertices.
  \item The diameter of every $A_i$ in $G$ is at most $\frac12\log^2 n$.
  \item The distance in $G$ between any distinct $A_i,A_j$ is at least $25 \Delta \sobinf^{-1}\log n $.
\end{enumerate}
Let $\sparse=\sparse(G)=\{ \Lambda\subset V: \mbox{$\Lambda$ is sparse}\}$.
\end{definition}

We can now state the main result of this subsection, which provides an upper bound on the $L^1$-distance of the dynamics
from equilibrium in terms of the corresponding quantity for its projection onto a \emph{sparse} set of sites.
Recalling the notation of Theorem~\ref{mainthm-arbitrary}, we let $\Delta$ denotes the maximal degree,
$\sobinf$ is the minimal log-Sobolev constant over
all induced subgraphs $H\subset G$ whose diameter is at most $\log^2 n$ (under free boundary conditions), and $\rho$ is the
maximal size of a ball of radius $10\Delta \sobinf^{-1}\log n$ around a vertex.

\begin{theorem}\label{thm-xt-bound-a-sparse}
Let $(X_t)$ be the Glauber dynamics on $G$ and let $\mu$ be its stationary measure. Let $7 \Delta \sobinf^{-2} \log\rho \leq s \leq 2 \sobinf^{-1} \log n$ and $t>0$. Then there exists some distribution $\nu$ on $\sparse$ such that for large enough $n$
\begin{align*}
\left\| \P_{\sigma_0}(X_{t+s}\in\cdot)-\mu\right\|_\tv &\leq \int_{\sparse}\left\|\P_{\sigma_0}(X_t(\Lambda)\in\cdot)-\mu|_{\Lambda} \right\|_\tv\, d\nu(\Lambda) +3 n^{-10}\,,
 \end{align*}
 and moreover $\nu(\{\Lambda : v \in \Lambda\}) \leq \rho^{-10}$ for every vertex $v\in V$.
\end{theorem}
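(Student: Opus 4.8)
The plan is to use the random-map machinery of Lemma~\ref{lem-support}, applied to the Markov chain run from time $t$ to time $t+s$. Viewing this portion of the dynamics as a random map $f_W : \{\pm1\}^V \to \{\pm1\}^V$ with $f_W(X_t)=X_{t+s}$ (using the grand monotone coupling of the Ising Glauber dynamics driven by the Poisson clocks and uniform marks $W$ on $[t,t+s]$), Lemma~\ref{lem-support} immediately gives
\[
\left\|\P_{\sigma_0}(X_{t+s}\in\cdot)-\mu\right\|_\tv \leq \int \left\|\P_{\sigma_0}(X_t(\Lambda_W)\in\cdot)-\mu|_{\Lambda_W}\right\|_\tv\, d\P(W)\,,
\]
where $\Lambda_W$ is the update support. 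The law of $\Lambda_W$ (restricted to the event that $\Lambda_W\in\sparse$, with the complementary mass placed arbitrarily) will be the desired distribution $\nu$, provided we can show $\P(\Lambda_W\notin\sparse)\leq 3n^{-10}$ and $\P(v\in\Lambda_W)\leq\rho^{-10}$ for every $v$. The first key step is therefore to replace the true dynamics by the \emph{barrier dynamics}: a modified map $\tilde f_W$ in which, at a well-chosen mesoscopic scale, information is forcibly prevented from propagating across a grid of ``barriers,'' so that the support of $\tilde f_W$ automatically splits into components of diameter $\leq\frac12\log^2 n$ lying in well-separated cells. One shows the barrier dynamics couples with the true dynamics with probability $\geq 1-n^{-10}$ over the time interval of length $s$, since $s\leq 2\sobinf^{-1}\log n = O(\log n)$ while barriers are placed at spacing $\gg\log n$, so information simply has no time to reach them.

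The heart of the argument is then the bound on the support size within a single cell. Fix a vertex $v$; I want to show that $\P(v\in\Lambda_W)$ decays like a large negative power of $\rho$. The mechanism: $v\in\Lambda_W$ only if the grand coupling, restricted to the neighborhood $B_v(r)$ with $r = \lfloor 10\Delta\sobinf^{-1}\log n\rfloor$, has \emph{not} fully coalesced by time $s$ — i.e. there remain two initial configurations whose trajectories disagree at $v$ at time $t+s$. Because the clocks outside $B_v(r)$ cannot influence $v$ within time $s < r$ (again using $s = O(\log n)$ and the constant speed of propagation, with the barrier dynamics guaranteeing this deterministically), this is an event about the dynamics on the bounded region $B_v(r)$, which has $\leq\rho$ vertices. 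On such a region the log-Sobolev constant is $\geq\sobinf$ (its diameter is $\leq 2r = O(\log n) \leq \log^2 n$), so by Theorem~\ref{thm-l2-sobolev} the heat-bath chain there reaches $L^2$-distance $\leq e^{1-s'}$ after time $(4\sobinf)^{-1}|\log\log(1/\mu_{\min})| + \gap^{-1}s'$; here $\log\log(1/\mu_{\min}) = O(\log|B_v(r)|) = O(\log\rho)$ and $\gap^{-1}\leq(2\sobinf)^{-1}$, so taking $s\geq 7\Delta\sobinf^{-2}\log\rho$ makes the $L^2$-distance — hence the coalescence-failure probability, and hence $\P(v\in\Lambda_W)$ — at most $\rho^{-10}$, using that $\Delta\geq2$. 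This gives both $\nu(\{\Lambda:v\in\Lambda\})\leq\rho^{-10}$ directly, and, by a union bound over the $\leq n$ vertices in a cell and a Markov/Chernoff argument, that with probability $\geq 1-n^{-10}$ every cell contributes $\leq\rho^3\log n$ support vertices, giving condition~1 of Definition~\ref{def-sparse-set} (the factor $\rho^3$ and the $\log n$ absorb the union bound and a tail of the Binomial count).

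Assembling: conditions~2 and~3 of sparseness hold by construction of the barrier grid; condition~1 and the per-vertex bound hold on the good coupling event by the log-Sobolev estimate above; the total failure probability is $\leq 2n^{-10}$ (coupling) $+\,n^{-10}$ (support too large in some cell) $= 3n^{-10}$, which is where the $3n^{-10}$ term in the statement comes from. The main obstacle is the middle step — making rigorous the claim that failure of $v$ to be ``decided'' is controlled by the $L^2$-mixing of the \emph{local} dynamics on $B_v(r)$ with free boundary. This requires (i) a disagreement-percolation / finite-speed-of-propagation input to confine the relevant randomness to $B_v(r)$, which the barrier dynamics supplies, and (ii) translating ``the grand coupling on $B_v(r)$ has coalesced'' into a statement to which Theorem~\ref{thm-l2-sobolev} applies — concretely, bounding the coalescence-failure probability by the worst-case $L^2$-distance from stationarity over starting states, e.g. via the standard fact that under the monotone coupling the probability that the all-plus and all-minus trajectories still disagree at $v$ is at most $\|\P_+(\,\cdot\,)-\mu\|_\tv + \|\P_-(\,\cdot\,)-\mu\|_\tv \leq 2\|\P_+(\,\cdot\,)-\mu\|_{L^2}$ on $B_v(r)$, for which Theorem~\ref{thm-l2-sobolev} gives the required exponential-in-$s$ decay.
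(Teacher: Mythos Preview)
Your overall scaffold --- apply Lemma~\ref{lem-support} to the time-$s$ random map, couple to a barrier dynamics to localize the support, and use Theorem~\ref{thm-l2-sobolev} on balls $B_v(r)$ to bound $\P(v\in\Lambda_W)$ --- matches the paper, and your per-vertex computation (monotone coupling gives $\P(\text{non-coalescence at }v)\leq \|\P_+-\pi\|_\tv+\|\P_- -\pi\|_\tv\leq \|\P_\pm-\pi\|_{L^2}$ on $B_v(r)$, then plug in the log-Sobolev bound) is exactly the heart of Lemma~\ref{lem-sparse-prob}.

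The gap is in how you obtain sparseness. You describe the barrier dynamics as a fixed \emph{grid} with cells of diameter $\leq\tfrac12\log^2 n$ and assert that conditions~2 and~3 of Definition~\ref{def-sparse-set} ``hold by construction.'' A grid cannot deliver condition~3: support vertices in adjacent cells can sit arbitrarily close to their common boundary, so the required separation $\geq 25\Delta\sobinf^{-1}\log n$ fails. Trying to show the support avoids a thickened grid does not work either, since the per-vertex bound $\rho^{-10}$ is far too weak to survive a union bound over the $\Theta(n)$ grid-adjacent vertices at target failure probability $n^{-10}$. Your Chernoff sketch for condition~1 has the same defect: you have not exhibited any independence among the events $\{v\in\Lambda_W\}$.

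The paper's barrier dynamics is different and this difference is what makes the argument go through. It assigns to \emph{every} vertex $u$ its own ball $B_u(r)$ run with free boundary (Definition~\ref{def-barrier}), so that $\Lambda_{W_s}\subset\overline{\Lambda}_{W_s}=\bigcup\{B_v(r):U_v\}$ where $U_v$ is non-coalescence at $v$. The payoff is that the events $U_v$ for vertices at mutual distance $>2r$ are genuinely \emph{independent} (they depend on disjoint balls). Sparseness is then not a deterministic consequence of any grid but a probabilistic statement about the geometry of the random set $\{v:U_v\}$: Lemma~\ref{lem-sparse-prob} rules out, via the events $\Xi$ and $\Xi'$, long chains of well-spaced bad vertices, and this forces the connected components of $\overline{\Lambda}_{W_s}$ (under the rule ``join $B_u(r),B_v(r)$ if $\dist(u,v)\leq 5r$'') to satisfy all three conditions. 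That chain-of-bad-vertices argument, driven by the independence, is the missing ingredient in your proposal.
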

The remainder of this subsection will be devoted to the proof of the above theorem, the first ingredient of which would be to consider a close variant of the dynamics which effectively turns distant sites into independent: We refer to this as the \emph{Barrier dynamics}, as it is achieved roughly by fixing the boundary on the perimeter of a ball centered at each vertex (forming a barrier on the propagation of information).

Recall that $B_v(\ell) = \{ w \in V : \dist(v,w)\leq \ell\}$ denotes the ball of radius $\ell$ about the vertex $v$, and similarly let $\partial B_v(\ell)$ denote the boundary of this ball, i.e.\ the subset $\{w \in V : \dist(v,w)=\ell\}$. For a subset $A \subset V$ define $B_A(\ell)$ and $\partial B_A(\ell)$ analogously, e.g.\ $B_A(\ell) = \cup_{v\in A}B_v(\ell)$.

Throughout this section let
\begin{align}
  \label{eq-r-def}
  r = \lfloor 10 \Delta \sobinf^{-1} \log n \rfloor\,,
\end{align}
where $\sobinf$ was defined in Theorem~\ref{mainthm-arbitrary} to be the minimum of the log-Sobolev constants of the Glauber dynamics over all induced subgraphs of $G$ with diameter at most $\log^2 n$. Note that by definition $\rho = \max_{v\in V} |B_v(r)|$.
\begin{definition}
  [\emph{Barrier-dynamics}]\label{def-barrier}
Let $(X_t)$ be the Glauber dynamics for the Ising model on $G$.
Define the corresponding \emph{barrier-dynamics} as the following coupled Markov chain:
\begin{enumerate}
  \item For each vertex $u$, let $\Omega_u = \{\pm 1\}^{B_r(u)}$. The state set of the barrier dynamics is the cartesian product of $\{\Omega_u : u \in V\}$ and its marginal on $\Omega_u$ is the Ising model on $B_u(r)$ with free boundary (to be thought of as if a barrier disconnected $B_u(r)$ from the remainder of $G$).
  \item The coupling of the dynamics with the standard Glauber dynamics $(X_t)$ is defined as follows:
  The initial configuration of each $\Omega_u$ is obtained by projection the initial configuration of $X$ onto $B_u(r)$ for each $u$.
  Whenever the Glauber dynamics updates a site $v$ via a variable $I\sim U[0,1]$ in the standard dynamics (occurring according to an independent unit-rate Poisson clock), each $\Omega_u$ such that $v \in B_u(r)$ updates its copy of $v$ via the same update variable $I$.
\end{enumerate}
\end{definition}
For any $s>0$, the barrier dynamics gives rise to a randomized operator $\cG_s$ on $\{\pm1\}^V$ obtained by projecting its configuration onto the centers of the balls $B_u(r)$. That is, in order to compute $\cG_s(\sigma)$, run the barrier dynamics for time $s$ starting from the initial configuration formed from $\sigma$ in the obvious manner and denote its final configuration by $\{ \sigma_u \in \Omega_u : u\in V\}$.
The output of $\cG_s(\sigma)$ assigns each $u\in V$ its value in $\sigma_u$.

\begin{lemma}\label{lem-barrier-couple}
Let $t_0 = 2\, \sobinf^{-1}\log n$. For any sufficiently large $n$, the barrier-dynamics and original Glauber dynamics are coupled up to time $t_0$
except with probability $n^{-10}$. That is, except with probability $n^{-10}$, for any $X_0$ we have $X_{s} = \mathcal{G}_s(X_0)$ simultaneously for all $s \leq t_0$.
\end{lemma}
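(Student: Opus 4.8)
\textbf{Proof proposal for Lemma~\ref{lem-barrier-couple}.}
The plan is to run both processes in the grand coupling of Definition~\ref{def-barrier} --- one family of rate-one clocks and one family of uniform update variables $I$ drives the true chain $(X_s)$ and every barrier copy simultaneously, for all initial states at once --- and then to invoke a \emph{finite speed of propagation} estimate. Fix a centre $u$ and let $D^{(u)}_s\subset B_u(r)$ be the set of sites where $X_s$ disagrees with the $\Omega_u$-component of the barrier chain at time $s$; what we must show is that with probability $\ge 1-n^{-10}$ we have $u\notin D^{(u)}_s$ for all $u\in V$ and all $s\le t_0$, since $X_s=\mathcal{G}_s(X_0)$ is precisely the statement that each barrier copy agrees with $X_s$ at its own centre. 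Because both chains are initialised from the same projection of $X_0$ onto $B_u(r)$ we have $D^{(u)}_0=\emptyset$; and inspecting the heat-bath (resp.\ Metropolis) rule, when the clock at $v\in B_u(r)$ rings at time $s$ the two updated spins at $v$ --- computed with the \emph{same} variable $I$ --- must coincide unless $v$ has a neighbour in $D^{(u)}_{s^-}$ or $v$ has a neighbour of $G$ lying outside $B_u(r)$ (and the latter forces $\dist(u,v)=r$). Hence a discrepancy can only be created on $\partial B_u(r)$ and can spread only to a $G$-neighbour of a currently discrepant site, and only at the instant that neighbour's clock rings.

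Now suppose $u\in D^{(u)}_s$ for some $s\le t_0$. Trace a causal chain backwards: $u$ entered $D^{(u)}$ at a time $\tau_1\le s$, necessarily via a discrepant neighbour $w_1$ (since $\dist(u,u)=0\ne r$); that $w_1$ entered $D^{(u)}$ at some $\tau_2<\tau_1$, and if this had happened through an out-of-ball neighbour it would force $\dist(u,w_1)=r$, impossible as $\dist(u,w_1)=1$ --- so for $r\ge 2$ it again happened via a discrepant neighbour $w_2$, and so on. In general the trace is forced to continue through discrepant neighbours for at least $r$ steps, because terminating at step $j$ through an out-of-ball neighbour would need $\dist(u,w_{j-1})=r$ while $\dist(u,w_{j-1})\le j-1$. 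Thus the bad event for the centre $u$ implies the existence of a walk $u=w_0\sim w_1\sim\cdots\sim w_r$ in $G$ together with clock-ring times $t_0\ge\tau_1>\tau_2>\cdots>\tau_r\ge 0$ such that the clock of $w_{i-1}$ rings at $\tau_i$.

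It remains to bound the probability of such a configuration for some $u$. For fixed $u$ there are at most $\Delta^r$ walks of length $r$ out of $u$, and for a fixed walk the probability that its vertices carry a strictly decreasing chain of $r$ clock rings inside $[0,t_0]$ is, by a first-moment bound, at most the expected number of such $r$-tuples, which equals $\int_{t_0>\tau_1>\cdots>\tau_r>0}d\tau=t_0^r/r!$ (the repeated-vertex case gives the same simplex volume, the $\tau_i$ being forced distinct). Therefore, using $r!\ge (r/e)^r$,
\[
\P\big(\text{the coupling fails before time }t_0\big)\le n\,\frac{(\Delta t_0)^r}{r!}\le n\Big(\frac{e\,\Delta t_0}{r}\Big)^{r}.
\]
With $t_0=2\sobinf^{-1}\log n$ and $r=\lfloor 10\Delta\sobinf^{-1}\log n\rfloor$ one has $e\,\Delta t_0/r\to e/5<\tfrac23$, so for large $n$ the right-hand side is at most $n(\tfrac23)^{r}=n^{1-\Omega(\Delta\sobinf^{-1})}$; since $\Delta\ge2$ and $\sobinf<\tfrac12$ always (so $\Delta\sobinf^{-1}>4$ and $r\ge 39\log n$ for large $n$) this is below $n^{-10}$ once $n$ is large. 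On the complement, $X_s=\mathcal{G}_s(X_0)$ holds for every $X_0$ and every $s\le t_0$ simultaneously, which is the assertion.

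The genuinely delicate step is the propagation claim of the first two paragraphs: one must verify carefully, within the grand coupling, that nothing about the configuration outside $B_u(r)$ (nor about any discrepant site) can affect the spin at $u$ except along a nearest-neighbour chain of updates that is monotone in time, keeping honest track of the facts that a vertex's clock may ring repeatedly, that a site may leave $D^{(u)}$ and later re-enter it, and that the backward-traced walk may revisit vertices. Everything past that is the union bound above plus the elementary Poisson estimate $t_0^r/r!$, and the numerics are comfortable precisely because $r$ was set to a fixed constant ($10$) times the largest reach a propagation chain of temporal length $\asymp t_0$ can achieve.
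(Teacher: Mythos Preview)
Your proof is correct and follows essentially the same approach as the paper's: both argue that a discrepancy at the centre $u$ forces a nearest-neighbour chain of sequential updates of length at least $r$ from $\partial B_u(r)$ to $u$, and then bound the probability of such a chain by $n\,\Delta^r\,t_0^r/r!$ via a union over walks and a first-moment (simplex-volume) estimate. Your backward-trace exposition is more explicit than the paper's, and you truncate at length exactly $r$ rather than summing over all $\ell\ge r$, but the numerics and the underlying idea are the same.
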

\begin{proof}
Let $(X_t)$ denote the Glauber dynamics on $G$ and let $(\tilde{X}_t)$ denote the barrier-dynamics coupled to $(X_t)$ as defined above.
Clearly, for any $l \leq r$ and $v\in V$ we have that $X_s \equiv \tilde{X}_s$ on $B_v(l)$ as long as none of the sites comprising $\partial B_v(l)$ were updated by time $s$. Therefore, if we assume that $X_s(v)\neq \tilde{X}_s(v)$ for some $v\in V$ and $s \leq t_0$ then there necessarily must exist a path of adjacent sites $u_1,\ldots,u_\ell$ connecting $v$ to $\partial B_v(r)$ and a sequence of times $t_1<\ldots<t_\ell\leq s$ such that site $u_i$ was updated at time $t_i$ (and as such $\ell \geq r$). Summing over all $\Delta^\ell$ possible paths originating from $v$, and accounting for the probability that the $\ell$ corresponding unit-rate Poisson clocks fire sequentially before time $s\leq t_0$ it then follows that
\begin{align*}
\P\bigg(\bigcup_{v\in V} \left\{ X_t(v) \neq \tilde{X}_t(v)\right\}\bigg) &\leq n \sum_{\ell \geq r} \Delta^\ell \, \P(\Po(t_0) \geq \ell)
 \leq 2 n \mathrm{e}^{-t_0} \frac{(\Delta t_0)^r }{r!} \,,
\end{align*}
where the last inequality followed from the fact that $r \geq 2 \Delta t_0$. Moreover, since
$r = (5+o(1)) \Delta t_0$ we further have $(\Delta t_0)^r/r! = (\mathrm{e}/5)^{(1+o(1))r}$. Using the facts $\mathrm{e}/5 \leq \mathrm{e}^{-3/5}$ and $r\geq (20+o(1))\log n$ (recall that $\Delta \geq 2$ and $\sobinf \leq 1$) we can now infer the required result (with room to spare) for large $n$.
\end{proof}

We next define the notion of an \emph{update support}, specializing the notion of the support of the random mapping representation from Definition~\ref{def-mc-support} to the barrier-dynamics map.
\begin{definition}[\emph{Update support}]\label{def-support} Let $W_s$ be an update sequence for the barrier-dynamics between times $(0,s)$.
The \emph{support} of $W_s$ is the minimum subset $\Lambda_{W_s} \subset V$ such that $\mathcal{G}_s(x)$ is a function of $x(\Lambda_{W_s})$ for any $x$, i.e.,
\[ g_{W_s}(x) = f_{W_s}(x(\Lambda_{W_s})) \mbox{ for some $f_{W_s}:\{\pm1\}^{\Lambda_{W_s}}\to \{\pm1\}^{V}$ and all $x$.}\]
For any given $W_s$ its support $\Lambda_{W_s}$ is uniquely defined.
\end{definition}
The following lemma demonstrates the merit of considering the update support.
\begin{lemma}\label{lem-xt-bound-aw} Let $(X_t)$ be the Glauber dynamics on $G$, set $t_0 = 2\, \sobinf^{-1}\log n$ and let $W_s$ be the random update sequence for the barrier-dynamics
along an interval $(0,s)$ for some $s \leq t_0$. For large enough $n$ and any $\sigma_0$ and $t>0$,
\begin{align*}
\left\| \P_{\sigma_0}(X_{t+s}\in\cdot)-\mu\right\|_\tv &\leq \int\!\!\left\|\P_{\sigma_0}(X_t(\Lambda_{W_s})\in\cdot)-\mu|_{\Lambda_{W_s}} \right\|_\tv d \P({W_s})+ 2 n^{-10}.
 \end{align*}
\end{lemma}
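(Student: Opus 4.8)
The plan is to combine Lemma~\ref{lem-barrier-couple} (coupling of the barrier dynamics and the true dynamics up to time $t_0$) with Lemma~\ref{lem-support} (the support bound for random mapping representations) applied to the barrier-dynamics operator $\cG_s$. First I would write $X_{t+s}$ as the image of $X_t$ under the random map $g_{W_s}$ associated with the true Glauber dynamics on the interval $(t,t+s)$, where $W_s$ is the update sequence (Poisson clock rings together with the $U[0,1]$ variables) on that interval. The subtlety is that the support of the \emph{true} dynamics is not controlled directly; instead we pass to the barrier-dynamics operator $\cG_s$, whose support $\Lambda_{W_s}$ is the object we actually want to project onto, and which by construction confines the influence of a site to within a ball of radius $r$.

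The key steps, in order: (1) Apply Lemma~\ref{lem-support} with $K$ the transition kernel of the barrier-dynamics map $\cG_s$ (its random mapping representation $(g_{W_s}, W_s)$), taking $\phi = \P_{\sigma_0}(X_t \in \cdot)$ and $\psi = \mu$. This gives
\[
\left\| \phi \cG_s - \mu \cG_s \right\|_\tv \le \int \left\| \P_{\sigma_0}(X_t(\Lambda_{W_s}) \in \cdot) - \mu|_{\Lambda_{W_s}} \right\|_\tv \, d\P(W_s)\,,
\]
using that $\mu$ projected to $\Lambda_{W_s}$ is $\mu|_{\Lambda_{W_s}}$ and that $X_t$ projected to $\Lambda_{W_s}$ has law $\P_{\sigma_0}(X_t(\Lambda_{W_s})\in\cdot)$. (2) Relate $\phi \cG_s$ to $\P_{\sigma_0}(X_{t+s}\in\cdot)$: running $s$ more steps of the true dynamics from $X_t$ agrees with applying $\cG_s$ to $X_t$ on the good event that the barrier and true dynamics remain coupled on $(t,t+s)$, which by Lemma~\ref{lem-barrier-couple} (applied to the interval $(t,t+s)$ of length $s \le t_0$, using the Markov property / stationarity of the clock law) fails with probability at most $n^{-10}$; hence $\|\P_{\sigma_0}(X_{t+s}\in\cdot) - \phi\cG_s\|_\tv \le n^{-10}$. (3) Relate $\mu\cG_s$ to $\mu$: since $\mu$ is stationary for the true dynamics, $\mu$ run for time $s$ is still $\mu$, and it agrees with $\mu \cG_s$ except on the same coupling-failure event, so $\|\mu - \mu\cG_s\|_\tv \le n^{-10}$. (4) Combine via the triangle inequality to get the stated bound with error $2n^{-10}$.

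The main obstacle I expect is step (2)--(3), namely being careful that the coupling failure probability from Lemma~\ref{lem-barrier-couple} can be invoked on the shifted interval $(t,t+s)$ rather than $(0,s)$, and that it applies simultaneously when the chain is started from $X_t$ (an arbitrary distribution, including $\mu$) — this is where one uses that the clock/update process on $(t,t+s)$ has the same law as on $(0,s)$ and is independent of $X_t$, so the union bound over vertices and over paths of length $\ge r$ reaching $\partial B_v(r)$ goes through verbatim. A secondary point requiring care is the identification of the support $\Lambda_{W_s}$ of $\cG_s$ with a subset of $V$ on which the whole map factors: this is exactly Definition~\ref{def-support}, and one must note that $\cG_s$ does admit the random mapping representation to which Lemma~\ref{lem-support} applies, with the generic "state space" there being $\Sigma^V = \{\pm1\}^V$. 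Once these bookkeeping matters are settled, the display in the lemma follows immediately.
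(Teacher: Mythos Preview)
Your proposal is correct and follows essentially the same route as the paper: apply Lemma~\ref{lem-support} to the barrier-dynamics map $\cG_s$ with $\phi=\P_{\sigma_0}(X_t\in\cdot)$ and $\psi=\mu$, then invoke Lemma~\ref{lem-barrier-couple} twice to replace $\cG_s(X_t)$ by $X_{t+s}$ and $\cG_s(Y)$ (for $Y\sim\mu$) by $\mu$, each at cost $n^{-10}$, and finish by the triangle inequality. The bookkeeping points you flag (time-shift invariance of the update process, applicability of Lemma~\ref{lem-support} to $\cG_s$) are exactly the ones the paper treats implicitly.
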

\begin{proof}
Let $(X_t)$ be Glauber dynamics at time $t$ started from $X_0=\sigma_0$ and let $Y \in \Omega$ be distributed according to $\mu$.
By considering the random map $\mathcal{G}_s$ as a discrete Markov chain on $\Omega$,
an application of Lemma~\ref{lem-support} gives that
\begin{align*} \left\|\P(\mathcal{G}_s(X_t)\in\cdot) - \P(\mathcal{G}_s(Y)\in\cdot)\right\|_\tv
\leq \!\int\!\! \left\| \P\big(X_t(\Lambda_{W_s})\in\cdot\big) - \mu|_{\Lambda_{W_s}}\right\|_\tv d\P({W_s})\,.
 \end{align*}
Since $s \leq t_0$, two applications of Lemma~\ref{lem-barrier-couple} will now couple $X_{t+s}$ with $\mathcal{G}_s(X_t)$
and similarly $\mathcal{G}_s(Y)$ with Glauber dynamics run from $Y$ for time $s$ (having the stationary distribution $\mu$)
except with probability $n^{-10}$:
\begin{align*}
\left\|\P(X_{t+s}\in\cdot) - \P(\mathcal{G}_s(X_t)\in\cdot)\right\|_\tv &\leq n^{-10}\,,\\
\left\|\P(\mathcal{G}_s(Y)\in\cdot)-\mu\right\|_\tv &\leq n^{-10}\, .
\end{align*}
Combining these estimates, it follows that
 \begin{align*}
\left\|\P(X_{t+s}\in\cdot) - \mu\right\|_\tv &\leq\left\|\P(\mathcal{G}_s(X_t)\in\cdot) - \P(\mathcal{G}_s(Y)\in\cdot)\right\|_\tv + 2n^{-10} \\
&\leq\int \left\| \P\big(X_t(\Lambda_{W_s})\in\cdot\big) - \mu|_{\Lambda_{W_s}}\right\|_\tv\,d\,\P({W_s})+ 2 n^{-10}\,,
\end{align*}
as required.
\end{proof}

\begin{lemma}\label{lem-sparse-prob}
Let $\mathcal{G}_s$ be the barrier-dynamics operator, let $W_s$ be its update sequence up to time $s$ for some $s \geq 7 \Delta \sobinf^{-2}\log \rho$, and let $\sparse$ be the collection of sparse sets of $G$.
For sufficiently large $n$ we have $\P(\Lambda_{W_s} \in \sparse) \geq 1- n^{-10}$ and furthermore $\P(v\in\Lambda_{W_s}) \leq \rho^{-10}$ for every $v\in V$.
\end{lemma}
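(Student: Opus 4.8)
The plan is to control the update support $\Lambda_{W_s}$ by a ``frozen cluster'' argument and then verify the three defining properties of a sparse set one at a time. First I would observe that by the construction of the barrier dynamics, the value $\mathcal{G}_s(x)$ restricted to a ball $B_u(r)$ depends only on $x(B_u(r))$, and moreover, for $u \in \Lambda_{W_s}$ it is necessary that inside $B_u(r)$ the marginal barrier chain has \emph{not} fully coupled over $(0,s)$: if all chains in $\Omega_u$ starting from every initial configuration of $B_u(r)$ coalesce by time $s$, then $u \notin \Lambda_{W_s}$. So the key quantity is the probability that Glauber dynamics with free boundary on a ball of radius $r$ — a graph of diameter at most $2r \leq \log^2 n$ for large $n$, hence with log-Sobolev constant at least $\sobinf$ — fails to couple from all states in time $s$. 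The grand (monotone) coupling of the Ising model coalesces exponentially fast: using the log-Sobolev bound (Theorem~\ref{thm-l2-sobolev}) on $B_u(r)$, whose state space has size $2^{|B_u(r)|}\le 2^\rho$, the $L^2$- (hence $L^\infty$-) distance from stationarity at time $s \geq 7\Delta\sobinf^{-2}\log\rho$ is at most $\exp(1-(1-o(1))\sobinf s + (4\sob)^{-1}|\log\log 2^{|B_u(r)|}|^+)$, which with $\log(1/\pi_{\min}) = O(\rho)$ and $s\ge 7\Delta\sobinf^{-2}\log\rho$ is $\rho^{-\Theta(\Delta/\sobinf)}$, comfortably smaller than $\rho^{-12}$, say. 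By monotonicity the top and bottom chains sandwich all others, so non-coalescence has probability at most the total-variation distance between those extremal chains, which is at most twice that $L^\infty$-bound. This already gives $\P(v\in\Lambda_{W_s}) \leq \rho^{-12} \leq \rho^{-10}$, the second assertion; I should be a little careful to state the coupling/coalescence estimate in terms of the barrier dynamics' shared update variables rather than an abstract grand coupling, but the substance is the exponential coalescence of stochastic Ising under log-Sobolev.

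Next I would establish that $\Lambda_{W_s}\in\sparse$ w.h.p.\ by checking Properties 1--3 of Definition~\ref{def-sparse-set}. For Property~3 (separation), note that membership of $u$ in $\Lambda_{W_s}$ is determined by the update sequence inside $B_u(r)$; if $\dist(u,u') > 2r$ then these regions are disjoint and the corresponding events are independent, but more to the point the support $\Lambda_{W_s}$ can only change value across distances at most $2r$ in the following sense: if $u\in\Lambda_{W_s}$ then some disagreement propagates within $B_u(r)$, so the ``influenced'' region around $u$ is contained in $B_u(r)$. Define the graph on $\Lambda_{W_s}$ connecting two points if their distance is at most $25\Delta\sobinf^{-1}\log n$; a connected component of this graph of total diameter exceeding $\tfrac12\log^2 n$ would require a chain of at least $\tfrac12\log^2 n / (25\Delta\sobinf^{-1}\log n) \asymp \sobinf\log n/(50\Delta)$ support-vertices, pairwise at bounded distance but collectively spread out, and hence (passing to a $2r$-separated sub-collection) at least order $\log n/\polylog$ \emph{independent} rare events each of probability $\rho^{-12}$; since there are at most $n$ choices for a seed and $\rho^{\polylog}$ choices for the rest (sub-exponential growth: balls have size $\rho = n^{o(1)}$), a union bound kills this with room to spare, proving Property~2. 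For Property~1 (each component has at most $\rho^3\log n$ vertices), the same estimate applies: a component with more than $\rho^3\log n$ vertices contains, after thinning to a $2r$-separated subset using that any ball of radius $r$ has at most $\rho$ points (so the thinned set still has $\ge \rho^2\log n/ \text{(const)}$ points), too many independent $\rho^{-12}$-events; union-bounding over $n$ seeds and the sub-exponentially-many extensions again gives failure probability $n^{-\omega(1)}$.

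The main obstacle I anticipate is making the ``independence'' in the union bound rigorous: the events $\{u\in\Lambda_{W_s}\}$ are not literally independent for nearby $u$, and the support set has complicated geometry, so I need to extract from any bad configuration (a large or fat component) a genuinely $2r$-separated sub-collection of support-vertices whose defining events \emph{are} mutually independent (because they depend on disjoint balls $B_u(r)$, hence disjoint portions of the Poisson/update data), and then control the entropy of the family of such sub-collections using sub-exponential growth ($\rho = n^{o(1)}$) to beat the probability $\rho^{-12}$ per vertex. Concretely I would: (i) fix a component $C$; (ii) greedily pick $u_1, u_2,\ldots$ in $C$ with pairwise distance $> 2r$, obtaining $|C|/\rho$ of them; (iii) bound $\P(\text{all }u_i \in \Lambda_{W_s}) \le (\rho^{-12})^{|C|/\rho}$; (iv) count: the number of possible seed sets of size $k$ reachable by short steps is at most $n\cdot(\rho^{\text{const}})^{k}$, so summing over $k\ge \rho^2\log n$ (for Property~1) or over fat components (for Property~2) yields a bound like $n\sum_k (\rho^{\text{const}} \rho^{-12/\rho\cdot\rho})^{k}$ — here one must check the arithmetic so the per-vertex gain $\rho^{-12}$ (spread over $1/\rho$ of the vertices, i.e.\ effectively $\rho^{-12}$ per \emph{independent} vertex) dominates the $\rho^{o(1)}$ entropy cost per vertex — giving $n^{-10}$ after adjusting constants, which is where the specific constants $7,25,10\Delta$ in the hypotheses and in Definition~\ref{def-sparse-set} are calibrated.
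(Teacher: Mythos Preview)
Your overall strategy---bound the probability of non-coalescence on a radius-$r$ ball via the log-Sobolev inequality, then extract from any bad component a well-separated family of independent rare events and union-bound using sub-exponential growth---is exactly the paper's approach. However, there is a genuine error in your first step that propagates into the independence argument.

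You claim that if the chain $\Omega_u$ on $B_u(r)$ coalesces from all initial states then $u\notin\Lambda_{W_s}$. This is false: coalescence of $\Omega_u$ says the \emph{output} of $\mathcal G_s$ at the center $u$ is determined regardless of input, whereas $u\in\Lambda_{W_s}$ says the \emph{input} at $u$ influences the output somewhere---possibly at some other vertex $v$ with $u\in B_v(r)$ whose chain $\Omega_v$ has \emph{not} coalesced. (Your preliminary assertion that $\mathcal G_s(x)$ restricted to $B_u(r)$ depends only on $x(B_u(r))$ is likewise wrong; the output at $v\in B_u(r)$ depends on $x(B_v(r))\subset x(B_u(2r))$.) The correct implication, which is what the paper uses, runs the other way: if $u\in\Lambda_{W_s}$ then flipping the input at $u$ changes the output at some $v$, forcing $u\in B_v(r)$ and, by monotonicity, the event $U_v=\{\text{the extremal chains in }\Omega_v\text{ disagree at }v\}$. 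Thus $\Lambda_{W_s}\subset\overline\Lambda_{W_s}:=\bigcup\{B_v(r):U_v\}$. Your log-Sobolev calculation is in fact a bound on $\P(U_v)\le\rho^{-6\Delta/\sobinf}\le\rho^{-12}$, and the per-vertex estimate follows after paying for the cover: $\P(u\in\Lambda_{W_s})\le|B_u(r)|\max_v\P(U_v)\le\rho\cdot\rho^{-12}\le\rho^{-10}$.

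This same confusion undermines your independence claim. The event $\{u\in\Lambda_{W_s}\}$ depends on updates in $\bigcup_{v\in B_u(r)}B_v(r)\subset B_u(2r)$, so $2r$-separation does \emph{not} make these events independent. By contrast, each $U_v$ depends only on the updates inside $B_v(r)$, so $\{U_{v_i}\}$ are independent once $\dist(v_i,v_j)>2r$. The paper therefore runs the union bound on the set $R=\{v:U_v\}$ rather than on $\Lambda_{W_s}$: it rules out (i) a sequence of $\ell=\lfloor\log n\rfloor$ points in $R$, each within distance $5r$ of the previous ones but pairwise $>2r$ apart, and (ii) a path of length $\ell'\asymp(\sobinf/\Delta)\log n/\log\rho$ in $R$ with consecutive distances in $(2r,5r]$; the entropy for such sequences is at most $\rho^{O(1)}$ per step while the probability is $\rho^{-12}$ per step, giving failure probability $n^{-\omega(1)}$. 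On the complement one partitions $\overline\Lambda_{W_s}$ by merging $B_u(r)$ and $B_v(r)$ whenever $\dist(u,v)\le 5r$, and the absence of (i) and (ii) yields the size, diameter, and separation bounds of Definition~\ref{def-sparse-set} directly. Your greedy-extraction sketch is morally this argument, but it must be carried out on $R$, not on $\Lambda_{W_s}$.
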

\begin{proof}
Let $U_v$ for $v\in V$ denote the event that the outputs of the barrier-dynamics operator $\mathcal{G}_s$ computed on the all-plus and all-minus inputs disagree on the value of $v$. Observe that if $\cG_s$ does agree on $v$ under the two extreme inputs of all-plus and all-minus then by monotonicity the initial configuration of $B_v(r)$ has no influence over this spin, and in particular,
\begin{equation}
  \label{eq-support-B-cover}
  \Lambda_{W_s} \subset \bigcup\left\{ B_v(r) : v \in V\mbox{ such that $U_v$ holds}\right\} \deq \overline{\Lambda}_{W_s}\,.
\end{equation}
Since the family of sparse sets monotone-decreasing, it clearly suffices to establish both statements of the lemma for the super-set $\overline{\Lambda}_{W_s}$ governed by the indicators $\{\one_{U_v} : v\in V\}$.

To estimate the probability of the event $U_v$ let $(\tX_t^+)$ and $(\tX_t^-)$ be two instances of the barrier-dynamics projected to $B_v(r)$ and starting from the all-plus and all-minus states respectively, coupled to each another via the monotone coupling.
These projections are simply the standard Glauber dynamics on $B_v(r)$, hence letting $\pi = \mu_{B_v(r)}^\emptyset$ we get
\begin{align*}
\P(U_v) &= \P\big(\tX^+_s(v) \neq \tX^-_s(v)\big) \leq
\| \P(\tX^+_s\in\cdot) - \pi \|_\tv + \| \P(\tX^-_s\in\cdot) - \pi \|_\tv \\
&\leq \tfrac12\|\P(\tX^+_s\in\cdot) - \pi\|_{L^2(\pi)} + \tfrac12\|\P(\tX^-_s\in\cdot) - \pi\|_{L^2(\pi)} \,.
\end{align*}
Due to Theorem~\ref{thm-l2-sobolev}, 
if the all-plus state $\underline{1}$ has stationary measure at most $\mathrm{e}^{-1}$ (clearly the case for large $n$ since $|B_v(r)|\geq r \to \infty$ and the interactions are finite) then for any $s >0$
\begin{equation}
   \label{eq-log-sob-support-application}
   \left\|\P (\tX^+_s\in \cdot)-\pi\right\|_{L^2(\pi)}\leq \exp\bigg(1-\gap\left(s-\frac1{4 \sob} \log\log \frac{1}{\pi(\underline{1})}\right)\bigg)\,,
 \end{equation}
where $\lambda$ and $\sob$ are the spectral gap and log-Sobolev constant resp.\ of the Glauber dynamics on $B_v(r)$. Recalling that trivially $\pi(\underline{1}) \geq 2^{-|B_v(r)|}$, by definition of $\rho$ we have
$\log\log(1/\pi(\underline{1})) \leq \log \rho$, and since $\gap \geq \sob \geq \sobinf$ the assumption on $s$ easily gives
\[ s-\frac1{4 \sob} \log\log \frac{1}{\pi(\underline{1})} \geq \left(7\Delta\sobinf^{-1} - \tfrac14\right) \gap^{-1} \log \rho\,,\]
hence for large $n$ we can absorb the pre-factor of $\mathrm{e}$ in the r.h.s.\ of~\eqref{eq-log-sob-support-application} and obtain (with room to spare) that
\[ \|\P(\tX^+_s\in\cdot) - \pi\|_{L^2(\pi)} \leq \rho^{-6 \Delta /  \sobinf} \,.\]
By the exact same argument we also have $\|\P(\tX^-_s\in\cdot) - \pi\|_{L^2(\pi)} \leq \rho^{-6\Delta/\sobinf}$ and it now follows that
\begin{align}
  \label{eq-Uv-bound}
  \P(U_v) \leq \rho^{-6\Delta / \sobinf}\,.
\end{align}
Going back to the definition of $\overline{\Lambda}_{W_s}$ in~\eqref{eq-support-B-cover} we can infer that for any $v\in V$,
\[ \P\left(v \in \overline{\Lambda}_{W_s}\right) \leq |B_v(r)|\rho^{-6\Delta/\sobinf} \leq \rho^{1-6\Delta/\sobinf} \leq \rho^{-10}\,,\]
where the last inequality (which used the fact that $\Delta\geq2$) implies the sought upper bound on $\P(v\in \Lambda_{W_s})$.

We now wish to show that $\P(\overline{\Lambda}_{W_s} \in \sparse) \geq 1-n^{-10}$. To see this, denote by $\Xi$ the event that there exists some sequence $S$ of
$ \ell = \lfloor \log n \rfloor$ points, $S = (v_1,v_2,\ldots,v_\ell)$, satisfying the following:
 \begin{enumerate}[1.]
 \item \label{it-S-dist} For all $i$ we have $2r < \dist_G\big(\{v_1,\ldots,v_i\},v_{i+1}\big) \leq 5r$.
 \item \label{it-S-support} For all $i$ the event $U_{v_i}$ holds.
 \end{enumerate}
Notice that for any sequence $S$ satisfying Item~\ref{it-S-dist},
given $v_1,\ldots,v_j$ there are at most $\sum_{i\leq j} |B_{v_i}(5r)| \leq j \rho^5$ choices for the vertex $v_{j+1}$. Furthermore, the balls $\{B_{v_i}(r)\}$ are pairwise disjoint and as a consequence the events $\{U_{v_i}\}$ are mutually independent. As $\P(U_{v_i}) \leq \rho^{-6\Delta/\sobinf} \leq \rho^{-12}$ for all $i$ we now get
\begin{align*}
 \P(\Xi) &\leq n (\ell-1)!\rho^{5(\ell-1)-12\ell}
 \leq n \left(\ell \rho^{-7}\right)^{\ell} \leq n \rho^{-6\ell}\\
 &\leq n^{1- (6-o(1))\log\rho} < n^{-\log\log n}
\end{align*}
for large enough $n$, as in that case one has $\rho \geq r  \geq \lceil\log n\rceil$.

Next, let $\Xi'$ denote the event that there exists some sequence $S'$ of
\[ \ell' = \bigg\lfloor 4\,\frac{\sobinf}{\Delta}\, \frac{\log n}{\log\rho}\bigg\rfloor \]
points, $S' = (v_1,v_2,\ldots,v_{\ell'})$, satisfying the following:
 \begin{enumerate}[1'.]
 \item \label{it-S'-dist} For all $i$ we have
 $\dist_G\big(\{v_1,\ldots,v_i\},v_{i+1}\big) > 2r$ and $\dist_G\big(v_i,v_{i+1}\big) \leq 5r$.
 \item \label{it-S'-support} For all $i$ the event $U_{v_i}$ holds.
 \end{enumerate}
We now argue as above: For any sequence $S$ satisfying Item~\ref{it-S'-dist}', given $v_j$ there are at most $|B_{v_j}(5r)| \leq \rho^5$ choices for the vertex $v_{j+1}$, and as before the balls $\{B_{v_i}(r)\}$ are pairwise disjoint and so the events $\{U_{v_i}\}$ are independent. Plugging in the fact that $\P(U_{v_i}) \leq \rho^{-6\Delta/\sobinf} $ for all $i$ now yields
\[ \P(\Xi') \leq n \rho^{5(\ell'-1)} \rho^{-6\Delta \sobinf^{-1}\ell'}
 \leq n \rho^{-3\Delta\sobinf^{-1}\ell'}
 = n \rho^{-(12-o(1))\frac{\log n}{\log\rho}} = n^{-11+o(1)}\,.\]

Altogether, for sufficiently large $n$ we have $\P(\Xi \cup \Xi') < n^{-10}$. Conditioned on the fact that neither $\Xi$ nor $\Xi'$ occurred, and
consider the partition of $\overline{\Lambda}_{W_s}$ to components $\{A_i\}$ obtained by repeatedly applying the rule whereby one identifies the components of $B_u(r)$ and $B_v(r)$ if $\dist_G(u,v) \leq 5r$.
Clearly, if $R = \{ v \in V : U_v \mbox{ holds}\}$ then each $A_i$ can contain at most $\rho^2 \ell$ vertices of $R$, otherwise one could greedily find in $A_i$ a sequence $S$ satisfying the conditions specified in the event $\Xi$. Since $\overline{\Lambda}_{W_s} = \bigcup_{v\in R} B_v(r)$ we get that
\[ |A_i| \leq \rho^3 \ell \leq \rho^3\log n\quad\mbox{ for all $i$}\,.\]
Similarly, for every $u,v\in R$ that belong to the same $A_i$ we must have $\dist_G(u,v) \leq 2r \ell'$ or else one could find in $A_i$ a sequence $S'$ satisfying the conditions specified in the event $\Xi'$. Thus, taking into account the inclusion of an extra $B_v(r)$ for all $v\in R$ when forming $\overline{\Lambda}_{W_s}$ we infer that for all $i$
\[ \diam_G(A_i) \leq 2r \ell' + 2r \leq \frac{80+o(1)}{\log \rho}\log^2 n < \frac12 \log^2 n \,,\]
where the last inequality holds for large enough $n$ as $\rho$ diverges with $n$.
Finally, if $u,v\in R$ belong to distinct components $A_i,A_j$ then by definition $\dist_G(u,v) > 5r$. Thus, accounting for the addition of $\{B_u(r) : u \in R\}$ when forming $\overline{\Lambda}_{W_s}$ gives that for all $i\neq j$ and sufficiently large $n$,
\[ \dist_G(A_i,A_j) > 3r > 25 \Delta \sobinf^{-1}\log n\,.\]
We conclude that $\P(\overline{\Lambda}_{W_s} \in \sparse) \geq 1-n^{-10}$, as required.
\end{proof}

The statement of Theorem~\ref{thm-xt-bound-a-sparse} is an immediate corollary of Lemmas~\ref{lem-xt-bound-aw} and~\ref{lem-sparse-prob}, obtained by setting $\nu(\Lambda) = \P(W_s : \Lambda_{W_s} = \Lambda)$ for $\Lambda\in \sparse$.

\subsection{Convergence in $L^1$ of the dynamics projected on sparse sets}
So far we showed an upper bound on the $L^1$-distance of the dynamics from equilibrium in terms
of its projection onto a sparse set of sites. The goal of this subsection is to provide the following estimate for the latter quantity:
\begin{theorem}\label{thm-tv-project}
Let $\Lambda\in \sparse$ and let $A_1,\ldots,A_L$ be its partition to components as per Definition~\ref{def-sparse-set}.
Let $(X^i_t)$ be the Glauber dynamics on $A_i^+ = B_{A_i}(r)$ with free boundary and the original interactions and external field, and let $\pi_i = \mu^\emptyset_{A_i^+}$ denote its stationary distribution. Further define
\begin{align*}
\lltwo_t(A_i,\sigma_0) &\deq \left\|\P_{\sigma_0}(X^i_{t}(A_i)\in\cdot)- \pi_i|_{A_i}  \right\|^2_{L^2(\pi_i|_{A_i})}\,,\\
\ltwo_t(\Lambda,\sigma_0) &\deq \sum_i \lltwo_t(A_i,\sigma_0)\,.
 \end{align*}
Then for any $\sigma_0$ and $\frac14 \log n \leq t \leq t_0$, the dynamics $(X_t)$ on $G$ satisfies
\[ \left\|\P_{\sigma_0}(X_t(\Lambda)\in\cdot)-\mu|_{\Lambda} \right\|_\tv = 2\Phi\big(\sqrt{\ltwo_t(\Lambda,\sigma_0)}/2\big)-1 + o(1)\,,\]
where the $o(1)$-term tends to $0$ as $n\to\infty$.
\end{theorem}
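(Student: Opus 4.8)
The plan is to reduce, up to $o(1)$ total-variation errors, to the product-chain setting of Proposition~\ref{prop-l1-l2}. The first step is to decouple the components by finite speed of propagation. Since $\dist_G(A_i,A_j)\ge 25\Delta\sobinf^{-1}\log n>2r$, the enlarged blocks $A_i^+=B_{A_i}(r)$ are pairwise disjoint, so the free-boundary dynamics $X^i$ on $A_i^+$ — each coupled to $X$ by having $X^i$ mimic every update of $X$ at a site of $A_i^+$ — are driven by disjoint families of Poisson clocks and are hence mutually independent; thus $(X^i_\cdot)_{i=1}^L$ is a genuine product chain. By the argument of Lemma~\ref{lem-barrier-couple}, a discrepancy between $X_s(v)$ and $X^i_s(v)$ for $v\in A_i$ and $s\le t_0$ forces a sequentially updated path of length at least $r$ from $v$ out to $\partial A_i^+$, an event of probability $n^{-\omega(1)}$ per vertex; summing over the at most $n$ vertices of $\Lambda$ gives $\|\P_{\sigma_0}(X_t(\Lambda)\in\cdot)-\bigotimes_i\P_{\sigma_0}(X^i_t(A_i)\in\cdot)\|_\tv=o(1)$ for all $t\le t_0$, uniformly in $\sigma_0$.

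Next I would apply Proposition~\ref{prop-l1-l2}, in the form valid for arbitrary product measures (per the remark following Corollary~\ref{cor-products}), to $\bigotimes_i\P_{\sigma_0}(X^i_t(A_i)\in\cdot)$ and $\bigotimes_i\pi_i|_{A_i}$ — note $\spinset^\Lambda=\prod_i\spinset^{A_i}$ and each $\pi_i|_{A_i}$ is everywhere positive. By the definitions of $\lltwo_t(A_i,\sigma_0)$ and $\ltwo_t(\Lambda,\sigma_0)$, the quantity in~\eqref{eq-product-m-def} for this pair of measures is exactly $\ltwo_t(\Lambda,\sigma_0)$. The one thing to check is hypothesis~\eqref{eq-product-assumption}, i.e.\ that $\max_i\|\P_{\sigma_0}(X^i_t(A_i)\in\cdot)-\pi_i|_{A_i}\|_{L^\infty(\pi_i|_{A_i})}\to0$: projecting onto $A_i$ cannot increase $L^\infty(\pi_i)$-distance, so it suffices to control $\|\P_{\sigma_0}(X^i_t\in\cdot)-\pi_i\|_{L^\infty(\pi_i)}$, which by the bound $\|\P_{\sigma_0}(X^i_t\in\cdot)-\pi_i\|_{L^\infty(\pi_i)}\le\|\P_{\sigma_0}(X^i_{t/2}\in\cdot)-\pi_i\|_{L^2(\pi_i)}^2$ used in the proof of Corollary~\ref{cor-products} is at most the squared $L^2$-distance at time $t/2$; since $A_i^+$ has $\diam\le\tfrac12\log^2 n+2r\le\log^2 n$ for large $n$, it has log-Sobolev constant $\ge\sobinf$ and gap $\ge2\sobinf$, and as $t/2\ge\tfrac18\log n$ dominates the log-Sobolev burn-in $(4\sob)^{-1}\log\log(1/\min\pi_i)=O(\log|A_i^+|)=o(\log n)$ (here sub-exponential growth enters, keeping $|A_i^+|\le\rho^{O(1)}\log n=n^{o(1)}$), Theorem~\ref{thm-l2-sobolev} forces this $L^2$-distance to $0$. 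Proposition~\ref{prop-l1-l2} then yields $\|\bigotimes_i\P_{\sigma_0}(X^i_t(A_i)\in\cdot)-\bigotimes_i\pi_i|_{A_i}\|_\tv=2\Phi(\sqrt{\ltwo_t(\Lambda,\sigma_0)}/2)-1+o(1)$.

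It remains to replace $\bigotimes_i\pi_i|_{A_i}$ by $\mu|_\Lambda$, i.e.\ to show $\|\mu|_\Lambda-\bigotimes_i\pi_i|_{A_i}\|_\tv=o(1)$. Running $X$ from stationarity, so $X_{t_0}(\Lambda)\sim\mu|_\Lambda$, the Step~1 coupling shows $\mu|_\Lambda$ is within $o(1)$ of the law of $(\hat X^i_{t_0}(A_i))_i$, where $\hat X^i$ is $X^i$ started from the restriction of $\sigma\sim\mu$ to $A_i^+$. Conditionally on $\sigma$ these $\hat X^i$ are independent (disjoint clocks), and each has, by Theorem~\ref{thm-l2-sobolev} with log-Sobolev constant $\ge\sobinf$, run well past its $L^1$-mixing time $(4\sobinf)^{-1}\log\log(1/\min\pi_i)+O(\sobinf^{-1})=o(\sobinf^{-1}\log n)\ll t_0=2\sobinf^{-1}\log n$, so its conditional law is within $n^{-\omega(1)}$ of $\pi_i|_{A_i}$ uniformly in $\sigma$. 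Averaging over $\sigma$ and using $\|\bigotimes_i a_i-\bigotimes_i b_i\|_\tv\le\sum_i\|a_i-b_i\|_\tv$ gives $\|\text{law of }(\hat X^i_{t_0}(A_i))_i-\bigotimes_i\pi_i|_{A_i}\|_\tv\le n\cdot n^{-\omega(1)}=o(1)$. Combining the three steps by the triangle inequality yields the stated identity, uniformly in $\sigma_0$ and $\tfrac14\log n\le t\le t_0$.

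The \textbf{main obstacle} is the verification of~\eqref{eq-product-assumption} in the second step: one needs the local chains $X^i$ to have mixed not merely in $L^1$ but in $L^\infty$ already by time $\tfrac14\log n$, which is exactly where one must use both that $\diam(A_i^+)\le\log^2 n$ (so $\sob(X^i)\ge\sobinf$) and that the blocks are small ($|A_i^+|=n^{o(1)}$, so the log-Sobolev burn-in term is $o(\log n)$); the thresholds $\tfrac14\log n$ and $t_0$ in the statement are calibrated so this goes through. A secondary subtlety is the near-independence argument of the third step, where the $\mu$-correlations among the initial configurations $\sigma(A_i^+)$ must be absorbed into an $o(1)$ error by exploiting that each $X^i$ has fully total-variation-mixed by time $t_0$.
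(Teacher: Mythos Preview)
Your approach is essentially the same as the paper's: reduce to a product chain on the blocks $A_i^+$ via the barrier-dynamics coupling (your Steps~1 and~3 together constitute the paper's Lemma~\ref{lem-sparse-product}), then verify hypothesis~\eqref{eq-product-assumption} via the log-Sobolev bound and apply Proposition~\ref{prop-l1-l2}. One minor difference in Step~3: the paper starts $X^*$ from $\pi$ and uses that $X_{t_0}$ is mixed on all of $G$ (via~\eqref{eq-dynamics-mixing}), whereas you start $X$ from $\mu$ and use that each $X^i$ is mixed by $t_0$; both are valid.

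There is, however, a quantitative slip in your verification of~\eqref{eq-product-assumption}. You claim the burn-in $(4\sob)^{-1}\log\log(1/\min\pi_i)$ is $o(\log n)$ by invoking sub-exponential growth ($\rho=n^{o(1)}$), but this is not a hypothesis of Theorem~\ref{thm-tv-project}; the paper works instead under the WLOG assumptions~\eqref{eq-Delta-sob-rho-assumption}--\eqref{eq-Delta-sob-logrho-upper}, which only give $\rho\le n^{1/16}$. Under these, your estimate $t/2-\text{burn-in}\ge c\log n$ multiplied by $\gap\ge 2\sobinf$ yields only $\gap(t/2-\text{burn-in})\ge c'\sobinf\log n$, and since $\sobinf$ may be as small as $\Theta(1/\log n)$ this is merely a bounded constant, not $\to\infty$. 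The fix is to keep the bookkeeping in terms of $\log\rho$ rather than $\log n$: from $t\ge\tfrac14\log n\ge 4\sobinf^{-1}\log\rho$ (by~\eqref{eq-Delta-sob-logrho-upper}) and burn-in $\le\tfrac{5+o(1)}{4}\sobinf^{-1}\log\rho$, one gets $\gap(t/2-\text{burn-in})\ge(\tfrac34-o(1))\log\rho\to\infty$, giving the $L^\infty$ bound $\rho^{-3/2+o(1)}=o(1)$ as in the paper. Similarly, your ``$n^{-\omega(1)}$'' estimates in Steps~1 and~3 are really only $n^{-\Theta(1)}$ (cf.\ Lemma~\ref{lem-barrier-couple}), though this still suffices after summing over at most $n$ blocks.
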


The following lemma will allow us to relate the projection of the dynamics onto a sparse set $\Lambda \in \sparse$ to the product-chain of the dynamics on the component partition corresponding to $\Lambda$.
\begin{lemma}\label{lem-sparse-product}
Let $\Lambda\in \sparse$ and let $A_1,\ldots,A_L$ be its partition to components as per Definition~\ref{def-sparse-set}. Set $A_i^+ = B_{A_i}(r)$,
let $(X_t^*)$ be the product chain of Glauber dynamics on the graphs induced on $A_1^+,\ldots,A_L^+$ independently (with free boundary and the original interactions and external field), and denote its stationary distribution by $\pi=\prod_i\mu_{A_i^+}^\emptyset$. Then for any $\sigma_0$ and $t \leq t_0$,
\[ \Big| \left\|\P_{\sigma_0}(X_t(\Lambda)\in\cdot)-\mu|_{\Lambda} \right\|_\tv -
 \left\|\P_{\sigma_0^*}(X^*_t(\Lambda) \in\cdot)-\pi|_\Lambda \right\|_\tv \Big| < n^{-2/3}\,,\]
 where $\sigma_0^* = \sigma_0(\cup_i A_i^+)$.
\end{lemma}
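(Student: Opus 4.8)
The plan is to realise both total-variation distances through a single graphical coupling and thereby reduce the claim to two estimates: that the true dynamics on $G$ and the product chain $(X^*_t)$ agree on $\Lambda$ with overwhelming probability, and that the two stationary laws $\mu$ and $\pi$ already coincide on $\Lambda$ up to an error $n^{-\Omega(1)}$. Since $\bigl|\|a-b\|_\tv-\|a'-b'\|_\tv\bigr|\le\|a-a'\|_\tv+\|b-b'\|_\tv$, this reduces the lemma to bounding
\[
\bigl\|\P_{\sigma_0}(X_t(\Lambda)\in\cdot)-\P_{\sigma_0^*}(X^*_t(\Lambda)\in\cdot)\bigr\|_\tv\quad\text{and}\quad\bigl\|\mu|_\Lambda-\pi|_\Lambda\bigr\|_\tv
\]
each by, say, $n^{-2}$ for large $n$, so that their sum is below $n^{-2/3}$. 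Throughout I will use that the blocks $A_i^+=B_{A_i}(r)$ are pairwise disjoint — indeed $\dist_G(A_i^+,A_j^+)\ge\dist_G(A_i,A_j)-2r\ge 5\Delta\sobinf^{-1}\log n$ by Definition~\ref{def-sparse-set} and~\eqref{eq-r-def} — so that $(X^*_t)$ is genuinely a product chain, and that $\diam_G(A_i^+)\le\diam_G(A_i)+2r\le\log^2 n$ for large $n$, so that the Glauber dynamics on $A_i^+$ with free boundary has log-Sobolev constant at least $\sobinf$.

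\emph{The dynamics agree on $\Lambda$.} Couple $(X_t)$ on $G$ started from $\sigma_0$ with $(X^*_t)$ by driving both off the same rate-one Poisson clocks and the same $U[0,1]$ update variables at the sites of $\cup_i A_i^+$. Since $X_0$ and $X^*_0$ agree on $\cup_i A_i^+$, and a site $v\in A_i^+$ whose entire $G$-neighbourhood is contained in $A_i^+$ is updated using identical data in the two chains, an induction on the update times shows that a disagreement between $X_s$ and $X^*_s$ at a vertex of $A_i$ for some $s\le t_0$ forces a path of consecutively updated sites running from the inner boundary $\{w\in A_i^+:\dist_G(w,A_i)=r\}$ of $A_i^+$ into $A_i$, hence of length at least $r$. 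This is exactly the event controlled in the proof of Lemma~\ref{lem-barrier-couple}, and the identical union bound (over at most $n$ starting vertices, the at most $\Delta^\ell$ paths of each length $\ell\ge r$, and the chance that $\ell$ unit-rate clocks fire in order before $t_0$) shows it has probability at most $n^{-10}$. As $t\le t_0$, this bounds the first displayed quantity by $n^{-10}$.

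\emph{The stationary measures agree on $\Lambda$.} Run the very same coupling, but now with $(X_t)$ started from $\mu$ and coupled to the product chain $(Y^*_t)$ with $Y^*_0=X_0(\cup_i A_i^+)\sim\mu|_{\cup_i A_i^+}$, for time $t_0$. By stationarity $X_{t_0}(\Lambda)\sim\mu|_\Lambda$, and by the previous step $X_{t_0}(\Lambda)=Y^*_{t_0}(\Lambda)$ except with probability $n^{-10}$, so $\bigl\|\mu|_\Lambda-\P(Y^*_{t_0}(\Lambda)\in\cdot)\bigr\|_\tv\le n^{-10}$. It remains to check that $(Y^*_t)$ is close to its stationary law $\pi=\prod_i\mu_{A_i^+}^\emptyset$ at time $t_0$ whatever its (non-product) initial distribution: conditioning on the initial configuration and using that total variation of a product of measures is at most the sum of the coordinate distances,
\[
\bigl\|\P(Y^*_{t_0}\in\cdot)-\pi\bigr\|_\tv\le\sum_{i=1}^L\ \max_{\eta}\ \tfrac12\bigl\|\P_\eta(X^i_{t_0}\in\cdot)-\mu_{A_i^+}^\emptyset\bigr\|_{L^2(\mu_{A_i^+}^\emptyset)},
\]
where $X^i$ is Glauber on $A_i^+$ with free boundary. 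Here $X^i$ has $\sob\ge\sobinf$ and $\gap\ge2\sobinf$, while $|\log\log(1/\mu_{A_i^+}^\emptyset(\eta))|^+=O(\log n)$ (the interactions and fields being bounded and $|A_i^+|\le n$); since $t_0=2\sobinf^{-1}\log n$, Theorem~\ref{thm-l2-sobolev} makes each summand at most $n^{-2-\Omega(1)}$, and as $L\le n$ the sum is at most $n^{-2}$ for large $n$. Combining, $\|\mu|_\Lambda-\pi|_\Lambda\|_\tv\le n^{-2}+n^{-10}$, and the lemma follows.

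I expect the second estimate to be the crux. The finite-speed-of-propagation bound is structurally identical to Lemma~\ref{lem-barrier-couple} and poses no new difficulty; the noteworthy feature is that one never appeals to a spatial-mixing property of $\mu$ directly — closeness of $\mu|_\Lambda$ to the product measure $\pi|_\Lambda$ is instead extracted from the dynamics, by marrying stationarity of $\mu$, finite propagation speed, and the rapid $L^2$-equilibration of the individual blocks guaranteed by the hypothesis $\sob(A_i^+)\ge\sobinf$. The only genuine care required is the dimensional bookkeeping: checking $r=o(\log^2 n)$ (so that $\diam(A_i^+)\le\log^2 n$, tensorizing over connected components of $A_i^+$ if it is disconnected), and invoking sparsity to keep the $A_i^+$ disjoint.
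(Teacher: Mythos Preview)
Your proof is correct and follows the same skeleton as the paper: reduce via the triangle inequality to bounding $\|\P_{\sigma_0}(X_t(\Lambda)\in\cdot)-\P_{\sigma_0^*}(X^*_t(\Lambda)\in\cdot)\|_\tv$ and $\|\mu|_\Lambda-\pi|_\Lambda\|_\tv$ separately, handle the first by the speed-of-propagation coupling exactly as in Lemma~\ref{lem-barrier-couple}, and handle the second by running the coupled pair for time $t_0$ with one side held stationary while the other is shown to mix.

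The one substantive variation is in which side you hold stationary for the second estimate. The paper starts the \emph{product} chain from $\pi$ (hence $X^*_{t_0}\sim\pi$) and then shows that the \emph{full} dynamics satisfies $\max_{\sigma_0}\|\P_{\sigma_0}(X_{t_0}\in\cdot)-\mu\|_\tv=O(n^{-3/4})$, via the monotone coupling together with the per-vertex barrier-dynamics bound $\P(U_v)\le n^{-7/4}$ from the proof of Lemma~\ref{lem-sparse-prob}. You do the dual: start the full dynamics from $\mu$ (hence $X_{t_0}\sim\mu$) and show that the \emph{product} chain is within $n^{-2}$ of $\pi$ by applying Theorem~\ref{thm-l2-sobolev} directly to each block $A_i^+$. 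Both routes are valid and of comparable difficulty. The paper's version has the incidental advantage that it yields, as a byproduct, the global upper bound $\tmix(\delta)\le t_0$ recorded in~\eqref{e:tmixUpperBound}, which is then used immediately afterward to justify the working assumptions~\eqref{eq-Delta-sob-rho-assumption}; your argument does not extract this, so if you substitute your proof you would need to supply that bound separately (the same per-vertex monotone-coupling argument does it in one line).
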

\begin{proof}[Proof of lemma]
Let $(X_t)$ be the original Glauber dynamics $(X_t)$ for the graph $G$ and consider its standard coupling
to the dynamics $(X_t^*)$, that is, run the two chains via the same unit-variables and Poisson clocks for the site updates. Crucially, the pairwise distances between the components $A_i$ are all strictly larger than $2 r$ and so the sets $A_i^+$ are pairwise disjoint, thus this coupling indeed retains the independence between the coordinates of the chain $X_t^*$. Moreover, the proof of Lemma~\ref{lem-barrier-couple} implies that
the projections onto $A_1,\ldots,A_L$ are maintained identical in this coupling until time $t_0$ except with probability at most $n^{-9}$ (the difference from the original estimate of $n^{-10}$ in that lemma is due to a union bound over the $L \leq n$ components). That is, with probability at least $1-n^{-9}$
\[ X_t(\Lambda) = X_t^*(\Lambda)\mbox{ for all $t\in[0,t_0]$}\,, \]
and in particular, letting $\sigma^*_0$ be the configuration derived from $\sigma_0$ in the obvious manner we get that for any $t \leq t_0$,
\begin{align}
  \label{eq-xt-xt*-distance}
 \| \P_{\sigma_0}(X_t(\Lambda)\in\cdot) - \P_{\sigma_0^*}(X_t^*(\Lambda)\in\cdot) \|_\tv \leq n^{-9}\,.
\end{align}
To relate $\mu|_\Lambda$ to $\pi|_\Lambda$ we first argue that $X_{t_0}$ is well-mixed under total-variation distance. Indeed, if $(X_t^+)$ and $(X_t^-)$ are instances of the dynamics starting from the all-plus and all-minus states resp.\ then for any $v\in V$,
\[
\P\left(X^+_{t_0}(v) \neq X^-_{t_0}(v)\right) \leq
\P\left(\tX^+_{t_0}(v) \neq \tX^-_{t_0}(v)\right) + n^{-10}\,,
\]
where $(\tX_t)$ denotes the Glauber dynamics on the graph $B_v(r)$ with interactions and external field inherited from $G$, and the additive term of $n^{-10}$ is justified by the same coupling argument given above. The event $\{\tX^+_{t_0}(v) \neq \tX^-_{t_0}(v)\}$ was considered in Lemma~\ref{lem-sparse-prob}, there denoted by $U_v$, and exactly the calculation that led Eq.~\eqref{eq-Uv-bound} valid for time $s \geq 5\Delta\sobinf^{-2}\log\rho$ now gives that
\[ \P\left(\tX^+_{t_0}(v) \neq \tX^-_{t_0}(v)\right) \leq n^{-7/4}\,.\]
Taking a union bound over the vertices it now follows that
\[   \P\left(X^+_{t_0} \neq X^-_{t_0}\right) \leq n^{-3/4} + n^{-10} = O(n^{-3/4})\,, \]
and in particular
\begin{equation}
  \label{eq-dynamics-mixing}
\max_{\sigma_0 } \| \P_{\sigma_0}(X_{t_0} \in \cdot) - \mu \|_\tv = O(n^{-3/4})\,.
\end{equation}
Let $X^*_0$ be a configuration on $\cup_i A_i^+$ distributed according to $\pi$ and extend it arbitrarily to a configuration $X_0$ on $G$.
As before, with probability at least $1-n^{-9}$ we can couple $(X_t),(X_t^*)$ so that they would agree on $\Lambda = \cup_i A_i$ throughout the time interval $[0,t_0]$. Since $X_{t_0}^*(\Lambda) \sim \pi|_\Lambda$ and total-variation distance can only decrease on a marginal we can infer from~\eqref{eq-dynamics-mixing} that
\begin{align*}
\| \pi|_\Lambda - \mu|_\Lambda \|_\tv &=
\| \P_{X_0^*}(X^*_{t_0}(\Lambda) \in \cdot) - \mu|_\Lambda \|_\tv \\ &\leq
\| \P_{X_0}(X_{t_0}(\Lambda) \in \cdot) - \mu|_\Lambda \|_\tv + n^{-9} \\
&\leq \| \P_{X_0}(X_{t_0} \in \cdot) - \mu \|_\tv + n^{-9} = O(n^{-3/4})\,.
\end{align*}
Combining this with~\eqref{eq-xt-xt*-distance} via the triangle inequality implies the statement of the lemma with the estimate $O(n^{-3/4}) < n^{-2/3}$ for sufficiently large $n$.
\end{proof}

In the course of the proof above we obtained in~\eqref{eq-dynamics-mixing} that for any $\delta > 0$,
\begin{equation}\label{e:tmixUpperBound}
\tmix(\delta) \leq t_0 = 2\sobinf^{-1}\log n\,.
\end{equation}
As a consequence of this, we claim that in our proof of Theorem~\ref{mainthm-arbitrary} we may assume without loss of generality that for large enough $n$ we have
\begin{align}
  \label{eq-Delta-sob-rho-assumption}
  \rho &\leq n^{1/16}\,,\quad\mbox{ and }\quad
  \Delta/\sobinf \leq \tfrac{1}{100}\log n\,.
\end{align}
Indeed, recall that our aim is to show that $\tmix(\delta)-\tmix(1-\delta)< 16\Delta \sobinf^{-2}\log \rho$ for any fixed $\delta>0$, and observe that to this end we need only consider the case where $\tmix(\delta) \geq 16\Delta\sobinf^{-2}\log\rho$ for small enough $\delta>0$ as otherwise the sought statement trivially holds. Combining this with the aforementioned upper bound on $\tmix(\delta)$
gives
\begin{equation}
  \label{eq-Delta-sob-logrho-upper}
  \Delta \sobinf^{-1}\log\rho \leq \tfrac1{8} \log n\,,
\end{equation}
producing the above bounds on $\rho$ (as $\Delta \geq 2$) and $\Delta/\sobinf$ (as $\rho \to \infty$ with $n$).

Armed with these estimates as well as Lemma~\ref{lem-sparse-product} we now turn to prove Theorem~\ref{thm-tv-project}.
\begin{proof}[\textbf{\emph{Proof of Theorem~\ref{thm-tv-project}}}]
By Lemma~\ref{lem-sparse-product} it suffices to show that the product chain $(X_t^*)$ with coordinates $(X_t^i)$ and stationary measure $\pi = \prod \pi_i$ satisfies
\begin{equation}
  \label{eq-Xt^*-tv-asymp}
  \left\|\P_{\sigma_0^*}(X^*_t(\Lambda) \in\cdot)-\pi|_\Lambda \right\|_\tv = 2\Phi\Big(\sqrt{\ltwo_t(\Lambda,\sigma_0^*)}/2\Big)-1 + o(1)
\end{equation}
for any sparse set $\Lambda$, and initial configuration $\sigma_0^*$ and any $\frac14 \log n \leq t \leq t_0$.

Consider the chain $(X_s^i)$ for some $1\leq i\leq L$, run from an arbitrary initial configuration $\sigma_0$ till time
\[ t \geq s_0 \deq 4\sobinf^{-1}\log\rho\,.\]
It is easily seen that for any two measures $\varphi,\psi$ on a state space $\Gamma$ and a map $f : \Gamma \to \Gamma'$, the $L^\infty(\pi)$ distance of $\varphi$ from $\psi$ can only decrease when taking the marginal of the measures on $\Gamma'$ since
\begin{align*}
\max_{y\in \Gamma'} \left|\frac{\varphi(f^{-1}(y))}{\psi(f^{-1}(y))} - 1 \right|
&\leq \max_{y\in\Gamma'}
\sum_{x \in f^{-1}(y)} \frac{\|\varphi-\psi\|_{L^\infty(\psi)}\;\psi(x)}{\psi(f^{-1}(y))}
= \|\varphi-\psi\|_{L^\infty(\psi)}\,.
\end{align*}
Specialized to our setting we get that
\begin{align*}
\left\|\P_{\sigma_0}(X_t^i(A_i)\in\cdot) - \pi_i|_{A_i} \right\|_{L^\infty(\pi_i|_{A_i})}
&\leq  \left\|\P_{\sigma_0}(X_t^i\in\cdot) - \pi_i\right\|_{L^\infty(\pi_i)} \\
&\leq
\left\|\P_{\sigma_0}(X_{t/2}^i\in\cdot) - \pi_i\right\|^2_{L^2(\pi_i)} \,,
\end{align*}
where the last transition incorporated a standard reduction from $L^\infty$ to $L^2$ (see e.g.~\cite{SaloffCoste}). Further recall that by Theorem~\ref{thm-l2-sobolev}
\begin{align*}
\left\|\P_{\sigma_0}(X_{t/2}^i\in\cdot) - \pi_i\right\|_{L^2(\pi_i)}
& \leq \exp\bigg(1-\gap\left(\frac{t}2-\frac1{4 \sob} \log\log \frac{1}{\pi_i(\sigma_0)}\right)\bigg)\,,
 \end{align*}
where $\lambda$ and $\sob$ are the spectral gap and log-Sobolev constant resp.\ of the chain $(X_t^i)$.
We can control the diameter of $A_i^+$ as follows:
\[ \diam_G(A_i^+) \leq 2r + \diam_G(A_i) \leq 20\frac{\Delta}{\sobinf}\log n + \frac12\log^2 n < \log^2 n\,,\]
where the last inequality was thanks to~\eqref{eq-Delta-sob-rho-assumption}. As a consequence, we have $\sob \geq \sobinf $ by definition of $\sobinf$.
Moreover, since $|A_i| \leq \rho^3 \log n$ we have
\begin{align*}
\log\log \frac{1}{\pi_i(\sigma_0)} &= (1+o(1))\log |A_i^+| \leq (1+o(1)) \log(\rho |A_i|) \\
&\leq  (1+o(1)) \log(\rho^4 \log n) \leq (5+o(1))\log\rho\,,
\end{align*}
(here we used the fact that $\rho \geq \log n$) and the fact $t \geq s_0$ now implies that
\[ \frac{t}2-\frac1{4 \sob} \log\log \frac{1}{\pi_i(\sigma_0)} \geq (\tfrac34-o(1)) \gap^{-1} \log \rho\,,\]
hence for sufficiently large $n$ we get
\begin{equation}\label{e:LInftyBoundBlock}
\left\|\P_{\sigma_0}(X_t^i(A_i)\in\cdot) - \pi_i|_{A_i} \right\|_{L^\infty(\pi_i|_{A_i})}
\leq \rho^{-3/2+o(1)} = o(1)\,.
\end{equation}
By~\eqref{eq-Delta-sob-logrho-upper} we have
$ 4\sobinf^{-1}\log \rho \leq \frac1{2\Delta}\log n \leq \frac14\log n $ (the last inequality due to the fact that $\Delta\geq 2$),
and so in our setting indeed $t \geq \frac14 \log n \geq s_0$.  We have thus established that every $(X_t^i)$
fulfills the requirement~\eqref{eq-product-assumption} of Proposition~\ref{prop-l1-l2} for arbitrarily small $\epsilon > 0$.
The application of this proposition now establishes~\eqref{eq-Xt^*-tv-asymp}, completing the proof.
\end{proof}

\subsection{Proof of Theorem~\ref{mainthm-arbitrary}}
We begin the proof by formulating an expression for the cutoff location.
Let $\ltwo_t$ be as in Theorem~\ref{thm-tv-project} and set
\begin{equation}
  \label{eq-tstar-def}
  t^\star = \inf\left\{t > 0 \;:\; \max_{\Lambda\in\sparse}\max_{\sigma_0} \ltwo_t(\Lambda,\sigma_0) \leq \rho \right\}\,.
\end{equation}
Observe that $\ltwo_t(\Lambda,\sigma_0)$ is continuous and monotone decreasing in $t$. A useful lower bound on these quantities is the following:
\begin{equation}\label{eq-ltwot-lower}
\max_{\Lambda\in\sparse}\max_{\sigma_0}\ltwo_t(\Lambda,\sigma_0) \geq n\rho^{-3}\mathrm{e}^{-2t}\quad\mbox{for any $t\geq 0$}\,.
\end{equation}
To see this, let $\Lambda$ consist of an arbitrary set of $n\rho^{-3}$ singletons whose pairwise distances all exceed $3r$, the existence of which is guaranteed by the definition of $\rho$. It therefore suffices to show that for each $v \in \Lambda$ we have $\ltwo_t(\{v\},\sigma_0) \geq \exp(-2t)$ for an appropriately chosen $\sigma_0 \in \{\pm1\}^{B_v(r)}$.
Write $\psi = \mu_{B_v(r)}^\emptyset|_{\{v\}}$, assume without loss of generality that $\psi(-1)\geq\frac12$ and let $\sigma_0$ be the all-plus starting configuration. Write $\varphi_t = \P_{\sigma_0}(X^*_t(v)\in\cdot)$ and note that the monotonicity of the Glauber dynamics implies that $\varphi_t$ stochastically dominates $\psi$ for any $t\geq 0$, and furthermore this holds even when conditioning on the sequence of updates sites up to time $t$ (yet without revealing the unit-variables used for generating the new spins). As such, if $\tau_v = \inf\{ t : \mbox{ $X^*_t$ updates the site $v$}\}$ then for any $t \geq 0$
\[ \varphi_t(1) \geq \P(\tau_v > t) + \psi(1)\P(\tau_v \leq t) = \psi(1) + \mathrm{e}^{-t}\psi(-1) \geq \psi(1)+\frac12\mathrm{e}^{-t}\,,\]
with the last inequality due to our assumption on $\psi(-1)$. Immediately it follows that
\begin{align*}
\|\varphi_t - \psi\|_{L^2(\psi)} &\geq \|\varphi_t - \psi\|_{L^1(\psi)} = 2\left|\varphi_t(1)-\psi(1)\right| \geq \mathrm{e}^{-t}\,,
\end{align*}
thus establishing~\eqref{eq-ltwot-lower}.
Since $\rho \leq n^{1/16}$ thanks to~\eqref{eq-Delta-sob-logrho-upper} we immediately observe that at $t=0$ we have $\max_{\Lambda}\max_{\sigma_0}\ltwo_t(\Lambda) \geq n\rho^{-3} > \rho$ and hence it follows from continuity that
\[ \max_{\Lambda\in\sparse}\max_{\sigma_0} \ltwo_{t^\star}(\Lambda,\sigma_0) = \rho \,.\]
Moreover, Eq.~\eqref{eq-ltwot-lower} implies that $\exp(-2t^\star) \leq \rho^4 / n \leq n^{-3/4}$ (where we again used the fact that $\rho \leq n^{1/16}$) and after rearranging this yields
\begin{align}
  \label{eq-tstar-lower}
  t^\star \geq \frac{3}8\log n > \frac14 \log n\,.
\end{align}
Conversely, using the log-Sobolev argument as in the proof of Theorem~\ref{thm-tv-project} one obtains that if $\Lambda$ is a sparse set and $A_1,\ldots,A_L$ are the components comprising its partition then, using the same notation as Theorem~\ref{thm-tv-project} we have
\[ \max_{\sigma_0}\lltwo_{t_0}(A_i,\sigma_0) \leq \exp\left[-2\left(t_0 + (\tfrac54 + o(1))\log \rho\right)\right] \leq n^{-3/2}\,.\]
As $L\leq n$ it follows that for $t\geq t_0$ one has $\max_{\Lambda} \max_{\sigma_0}\ltwo_{t}(\Lambda,\sigma_0) \leq n^{-1/2}$ whereas at $t=t^\star$ this quantity should equal $\rho \geq \log n$. Altogether we deduce that for large enough $n$
\begin{align}
  \label{eq-tstar-upper}
  t^\star < t_0 = 2 \sobinf^{-1}\log n\,.
\end{align}

At this point it we can establish that $\tmix(1-\epsilon) \geq t^\star$ for any fixed $\epsilon > 0$ and large enough $n$.
Indeed, the above given bounds on $t^\star$ place it in the admissible range for an application of Theorem~\ref{thm-tv-project}, and
taking $\Lambda$ and $\sigma_0$ to be those achieving the maximum in the definition~\eqref{eq-tstar-def} we get that
\begin{align*}
\left\|\P_{\sigma_0}(X_{t^\star}\in\cdot)-\mu \right\|_\tv &\geq
\left\|\P_{\sigma_0}(X_{t^\star}(\Lambda)\in\cdot)-\mu|_{\Lambda} \right\|_\tv \\
&= 2\Phi(\sqrt{\ltwo_t(\Lambda,\sigma_0)}/2)-1 - o(1)\,.
\end{align*}
Since $\ltwo_{t^\star}(\Lambda,\sigma_0) =\rho \to \infty$ we infer that $\left\|\P_{\sigma_0}(X_{t^\star}\in\cdot)-\mu \right\|_\tv =1-o(1)$, as required.

It remains to show that $\tmix(\epsilon) \leq t^\star + 16 \Delta \sobinf^{-2}\log\rho$ for any fixed $\epsilon>0$ and large enough $n$.
Thanks to the reduction provided by Theorem~\ref{thm-xt-bound-a-sparse}, this would follow from showing that
\begin{align}\label{eq-t-star-int-upper}
 \max_{\sigma_0}\int_{\sparse}\left\|\P_{\sigma_0}(X_{t^\star}(\Lambda)\in\cdot)-\mu|_{\Lambda} \right\|_\tv\, d\nu(\Lambda) = o(1)
\end{align}
where $\nu$ is the distribution on $\sparse$ specified in that theorem.
By Theorem~\ref{thm-tv-project} and the bounds~\eqref{eq-tstar-lower},\eqref{eq-tstar-upper} on $t^\star$ we can replace the integrand in the above equation by $2\Phi(\sqrt{\ltwo_{t^\star}(\Lambda,\sigma_0)}/2)-1 - o(1)$ and hence reduce the task of establishing~\eqref{eq-t-star-int-upper} to showing that
\begin{align}\label{eq-t-star-int-upper2}
\max_{\sigma_0}\int_{\sparse}
\Phi(\sqrt{\ltwo_{t^\star}(\Lambda,\sigma_0)}/2)\, d\nu(\Lambda) = \frac12 + o(1)\,.
\end{align}
To this end, let $\sigma_0$ be arbitrary and let $\Lambda_0$ be the sparse set that maximizes $\ltwo_{t^\star}(\Lambda_0,\sigma_0)$. To simplify the exposition, in what follows we will omit the reference to $\sigma_0$ as an argument of $\ltwo_{t^\star}$ when there is no danger of confusion.

It will be useful to define another sparse set $\widetilde{\Lambda}$ as follows. Begin by setting $\widetilde{\Lambda}=\emptyset$, then repeatedly add components $\{\tilde{A}_i\}$ to $\widetilde{\Lambda}$ via the following rule:
\begin{itemize}
  \item Let $\tilde{A}_1,\ldots,\tilde{A}_{k-1}$ be the components already collected.
  \item Let $\tilde{A}_{k}\subset V$ be the subset maximizing $\ltwo_{t^\star}(\tilde{A}_k)$
  subject to the constraint $\dist_G(\tilde{A}_i,\tilde{A}_k) \geq 3r$ for all $i < k$ as well as $|\tilde{A}_k|\leq\rho^3\log n$ and $\diam_G(\tilde{A}_k) \leq \frac12\log^2 n$.
  \item The process terminates once no such set $\tilde{A}_k$ exists.
\end{itemize}
Note that clearly
\begin{align}\label{eq-tildeLambda-Lambda0}
\ltwo_{t^\star}(\widetilde{\Lambda}) \leq \ltwo_{t^\star}(\Lambda_0) \leq \rho
\end{align}
(the last inequality may be strict since $\sigma_0$ was chosen arbitrarily as opposed to selecting the configuration for which $\ltwo_{t^\star}$ could obtain its global optimum).
For each $v\in V$ we associate the following quantity measuring the weight of the closest $\tilde{A}_i$ to it:
\[ \theta_v = \max\left\{\lltwo_{t^\star}(\tilde{A}_i)\;:\; \dist_G(v,\tilde{A}_i) \leq 3r\right\}\,.\]
Motivating this definition is the following simple fact: We argue that if $A_i$ is a component of some sparse set $\Lambda\in\sparse$ then
\[
   \lltwo_{t^\star}(A_i) \leq \max_{v \in A_i} \theta_v\,.
\]
To verify this fact, let $v$ be the vertex with the maximal $\theta_v$ among all vertices of $A_i$ and let $k\geq 1$ be the smallest index such that $\dist_G(v,\tilde{A}_k) \leq 3r$. (Note that $k$ is well-defined since if no such $\tilde{A}_k$ existed then $v$ itself would be admissible as an additional singleton cluster in $\widetilde{\Lambda}$, contradiction.) By the maximality of $v$ that is to say that $k$ is the smallest index such that $\dist(A_i,\tilde{A}_k) \leq 3r$.
Since $\dist_G(A_i,\tilde{A}_j)>3r$ for all $j<k$ it follows that $A_i$ satisfies all the conditions required from $\tilde{A}_{k+1}$ and thus by definition of $\widetilde{\Lambda}$
\[ \lltwo_{t^\star}(A_i) \leq \lltwo_{t^\star}(\tilde{A}_{k+1}) \leq \lltwo_{t^\star}(\tilde{A}_k) = \theta_v\,,\]
as claimed. Since the $A_i$'s are disjoint, this fact implies in particular that
\[
\ltwo_{t^\star}(\Lambda) \leq \sum_{v \in V} \theta_v \one_{\{v \in \Lambda\}}\quad\mbox{for any $\Lambda\in\sparse$}\,.
\]
Recall now that the measure $\nu$ over the collection of sparse sets $\sparse$ satisfies $\nu(\Lambda : v\in\Lambda) \leq \rho^{-10}$ for any $v \in V$. Plugging this in the above inequality we obtain that
\begin{align}\label{eq-thetav-ineq}
\int_\sparse \ltwo_{t^\star}(\Lambda) \, d\nu(\Lambda) \leq
 \rho^{-10} \sum_{v \in V} \theta_v \,.
 \end{align}
To estimate $\sum_v \theta_v$ recall that each $\theta_v$ amounts to
$\ltwo_{t^\star}(\tilde{A}_i)$ for some $\tilde{A}_i$ whose distance from $v$ is at most $3r$. Thus, each $\ltwo_{t^\star}(\tilde{A}_i)$ appears at most $|\tilde{A}_i| \rho^3$ times in this sum, and since by construction $|\tilde{A}_i| \leq \rho^3\log n$ we obtain that
\begin{align*}
\sum_v \theta_v &\leq \sum_i\lltwo_{t^\star}(\tilde{A}_i) \rho^6 \log n = \ltwo_{t^\star}(\widetilde{\Lambda}) \rho^7 \log n
\leq \ltwo_{t^\star}(\Lambda_0) \rho^7 \log n \leq \rho^{8}\log n\,,
 \end{align*}
where the last two inequalities used~\eqref{eq-tildeLambda-Lambda0}.
By combining this with~\eqref{eq-thetav-ineq} (and recalling that $\rho \geq \log n$) we now infer that
\[ \int_\sparse \ltwo_{t^\star}(\Lambda)\, d\nu(\Lambda) \leq \rho^{-1} = o(1)\,.\]
 Markov's inequality now implies that $\nu(\{\Lambda : \ltwo_{t^\star}(\Lambda) \geq \epsilon\}) = o(1)$ for any fixed $\epsilon>0$. This readily implies Eq.~\eqref{eq-t-star-int-upper2} by the continuity of the integrand in that equation
 as a function of $\ltwo_{t^\star}$ and thus completes the proof. \qed

\section{General spin system models}\label{sec:general}

In this section we extend the cutoff criterion, established in the previous sections for the Ising model, to general spin systems as defined in \S\ref{sec:prelim-spin-systems}, including for instance the Potts, proper coloring and hard-core models.  

While to a large extent the proofs presented thus far were not model specific and mainly used bounds on the log-Sobolev constant, we did use the monotonicity of the Ising model in an essential way in the context of the barrier dynamics operator $\cG_s$. Indeed, as defined $\cG_s$ is a \emph{grand coupling}, a coupling of all initial configurations simultaneously.  The monotonicity of the Ising model implies that if the chains starting from all-plus and all-minus agree at some vertex $v$ then so do those started at any other configuration.  For general spin-systems which do not have monotonicity we will construct a grand coupling and analyze it using ideas similar to those in the construction of perfect simulation algorithms (see, e.g.,~\cite{Huber}).

We will consider Markovian grand couplings where the Poisson clocks on the vertices are identical for all chains and the updates depend only on the current states of each chain and new independent randomness.

For such a grand coupling, let $X_t^{x_0}$ denote  the copy of the chain started from $X_0=x_0$.
Further define the disagreement process $\sX: \mathbb{R}_+ \times \Lambda \to \{0,1\}$ with $\sX_t(u)$ denoting the indicator that  there exist two initial configurations $x_0,x_0' \in \spinset^{\Lambda}$ such that at time  $t$ the corresponding chains disagree at site $u$, that is, $X_t^{x_0}(u)\neq X_t^{x_0'}(u)$.  
By this definition clearly $\sX_0\equiv 1$.  If for some $C,c>0$ the coupling satisfies the following exponential coupling condition
\begin{equation}\label{e:exponentialCoupling}
\max_{u\in\Lambda}\P(\sX_t(u)=1) \leq C e^{-ct},
\end{equation}
then we attain the following generalization of our framework for the Ising model to the case of a general spin-system.
\begin{theorem}\label{t:general}
Fix $d\geq 1$ and suppose that there exists a Markovian grand coupling for Glauber dynamics for a general spin-system on a box $\Lambda \subset \Z^d$ of side-length $n$ which satisfies~\eqref{e:exponentialCoupling}. Then the dynamics exhibits cutoff with a window of $O(\log\log n)$.
\end{theorem}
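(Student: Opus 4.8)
The plan is to re-run the proof of Theorem~\ref{mainthm-arbitrary} (equivalently Corollary~\ref{maincor-bc}) almost verbatim on the box $\Lambda\subset\Z^d$, replacing the monotone grand coupling of the Ising model everywhere by the hypothesized Markovian grand coupling, and checking that every step that invoked monotonicity survives. First I would redefine the barrier dynamics of Definition~\ref{def-barrier}: on each ball $B_u(r)$ with $r=\lfloor 10\Delta\sobinf^{-1}\log n\rfloor$ (here $\Delta=2d$, and $\sobinf$ is the uniform lower bound on the log-Sobolev constants of the induced dynamics on subgraphs of diameter $\le\log^2 n$, which for the models at hand is available throughout the high-temperature regime by Martinelli--Olivieri, the same regime in which \eqref{e:exponentialCoupling} can be verified) run the given grand coupling for Glauber dynamics on $B_u(r)$ with free boundary, driven by the same Poisson clocks and update randomness as the true chain; $\cG_s$ is again the projection onto the ball centers. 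Lemmas~\ref{lem-barrier-couple} and~\ref{lem-xt-bound-aw} then carry over with their proofs intact, since these use only that information in the true dynamics cannot cross $\partial B_v(r)$ within time $t_0=2\sobinf^{-1}\log n$ (a path-counting estimate) together with the model-independent support bound of Lemma~\ref{lem-support}.

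The one place where monotonicity was essential is Lemma~\ref{lem-sparse-prob}, specifically the inclusion \eqref{eq-support-B-cover} of the update support inside a union of balls around the sites where the two extremal starts disagree. Here I would replace the event $U_v$ by $\{\sX^{(v)}_s(v)=1\}$, where $\sX^{(v)}$ is the disagreement process of the grand coupling for the barrier dynamics restricted to $B_v(r)$: if every initial configuration yields the same value at $v$ then, by definition of the support, $\cG_s$ does not depend on $x(B_v(r))$ at $v$, so $\Lambda_{W_s}\subset\bigcup\{B_v(r):U_v\text{ holds}\}$ exactly as in \eqref{eq-support-B-cover}. The bound \eqref{eq-Uv-bound}, previously derived from the log-Sobolev inequality on $B_v(r)$, is now simply $\P(U_v)\le Ce^{-cs}$, directly from \eqref{e:exponentialCoupling} applied on $B_v(r)$; choosing $s$ a large enough multiple of $c^{-1}\log\rho$ gives $\P(U_v)\le\rho^{-K}$ for any desired $K$. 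Since the barrier dynamics on $B_v(r)$ is driven only by clocks and update randomness attached to sites of $B_v(r)$, the events $U_v$ over a collection of vertices with pairwise-disjoint $B_v(r)$ remain mutually independent, so the greedy union-bound arguments over the bad sequences $\Xi,\Xi'$ in the proof of Lemma~\ref{lem-sparse-prob} go through word for word, giving that $\Lambda_{W_s}$ is sparse w.h.p.\ and $\P(v\in\Lambda_{W_s})\le\rho^{-10}$. Consequently Theorem~\ref{thm-xt-bound-a-sparse} holds in this setting.

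The remaining ingredients are model-independent. Theorem~\ref{thm-tv-project} (the asymptotic $2\Phi(\sqrt{\ltwo_t}/2)-1$ formula for the TV distance of the projection onto a sparse set) and Lemma~\ref{lem-sparse-product} use only Proposition~\ref{prop-l1-l2} and log-Sobolev bounds on the blocks $A_i^+$; the single Ising-specific point there, the TV-mixing estimate \eqref{eq-dynamics-mixing}, is replaced by the observation that \eqref{e:exponentialCoupling} together with a union bound over the $n$ sites forces the grand coupling on $\Lambda$ to coalesce, hence the dynamics to be mixed in total variation, by time $t_0$. With these in hand the proof of Theorem~\ref{mainthm-arbitrary} is reproduced: one sets $t^\star$ to be the threshold in \eqref{eq-tstar-def}, and the upper bound $\tmix(\epsilon)\le t^\star+O(\Delta\sobinf^{-2}\log\rho)$ follows from the purely combinatorial greedy domination argument. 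Since on a box in $\Z^d$ one has $\Delta=O(1)$, $\sobinf=\Omega(1)$ and $\rho\asymp\log n$, this window is $O(\log\log n)$.

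I expect the main obstacle to be the matching lower bound $\tmix(1-\epsilon)\ge t^\star$. As in Theorem~\ref{mainthm-arbitrary} this reduces to showing $t^\star$ lies in the admissible range $[\tfrac14\log n,\,t_0]$ for Theorem~\ref{thm-tv-project}; the upper end is immediate from log-Sobolev, but the lower end requires the analogue of \eqref{eq-ltwot-lower}, i.e.\ that the $L^2$-distance of the single-site projection of the block dynamics from equilibrium, started from a well-chosen configuration, is at least $e^{-2t}$ on a family of $n^{1-o(1)}$ well-separated singletons. In the monotone case this came from stochastic domination of the single-site chain by its stationary law; without monotonicity I would start each block dynamics from the all-$a^\circ_v$ configuration on $B_v(r)$, where $a^\circ_v$ minimizes the stationary single-site marginal $\psi_v$, use the trivial bound $\P_{\sigma_0}(X^i_t(v)=a^\circ_v)\ge e^{-t}$ (the clock at $v$ may not ring), and rule out the exceptional cancellation with the stationary value using the rapid local relaxation guaranteed by \eqref{e:exponentialCoupling}. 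For each concrete model in Theorems~\ref{mainthm-potts-Zd}--\ref{mainthm-coloring-Zd} this lower bound can alternatively be extracted from a correlation inequality or from the standard (reordered) monotone structure available on the bipartite lattice $\Z^d$.
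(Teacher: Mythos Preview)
Your treatment of the upper bound is essentially the paper's: redefine the barrier dynamics via the grand coupling, replace the event $U_v$ by the disagreement event and bound it via~\eqref{e:exponentialCoupling}, and then the sparse-support and greedy-domination arguments go through verbatim. One small point you gloss over: the paper does not assume the log-Sobolev bound as an extra hypothesis but \emph{derives} it from~\eqref{e:exponentialCoupling} itself, since exponential coalescence gives $O(\log n)$ mixing under all boundary conditions, whence strong spatial mixing by~\cite{DSVW} and then a uniform $\sobinf$ by~\cite{MO2}.

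The substantive divergence is in the lower bound, exactly where you flag the obstacle. You try to recover the analogue of~\eqref{eq-ltwot-lower} directly, via the ``clock has not rung'' bound $\P_{\sigma_0}(X^i_t(v)=a^\circ_v)\ge e^{-t}$; but without monotonicity this does not by itself separate $\varphi_t(a^\circ_v)$ from $\psi_v(a^\circ_v)$ once $t$ is of order~$1$, and your appeal to ``ruling out cancellation via rapid local relaxation'' is not an argument. Your fallback to model-specific correlation inequalities would prove Theorems~\ref{mainthm-potts-Zd}--\ref{mainthm-coloring-Zd} but not Theorem~\ref{t:general} as stated. The paper avoids this entirely: it redefines
\[
t^\star=\inf\bigl\{t>(\log\log n)^2:\ \max_{\Lambda\in\sparse}\max_{\sigma_0}\ltwo_t(\Lambda,\sigma_0)\le\log\log n\bigr\},
\]
the hard floor $(\log\log n)^2$ being enough (since blocks are poly-logarithmic) for the $L^\infty$ hypothesis of Proposition~\ref{prop-l1-l2} and hence for both the upper-bound argument and Theorem~\ref{thm-tv-project}. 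It then invokes the general Hayes--Sinclair lower bound $\tmix(\epsilon)>c\log n$ for single-site dynamics; combined with the already-established $\tmix(\epsilon)\le t^\star+C\log\log n$ this forces $t^\star\asymp\log n$, so the floor is not binding, $\ltwo_{t^\star}=\log\log n\to\infty$, and the lower bound $\tmix(1-\epsilon)\ge t^\star$ follows from Theorem~\ref{thm-tv-project} exactly as before. This is cleaner and fully model-independent; you should adopt it rather than attempt a direct single-site $L^2$ lower bound.
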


\begin{proof}
We describe the modifications required in the proof of Theorem~\ref{mainthm-arbitrary} to complete the result. First note that the bounds of Lemma~\ref{l:contact} imply order $\log n$ mixing under any boundary conditions and so combined with Theorem~2.3 of~\cite{DSVW} we have strong spatial mixing and hence a uniformly bounded log-Sobolev constant~\cite{MO2}.
Most of the proof and the preceding lemmas can then be taken as unchanged yet in a few places monotonicity is exploited.
We define the barrier-dynamics in the same way, taking the single spin updates according to the grand coupling described above.

In Lemma~\ref{lem-sparse-prob} monotonicity is used to give an upper bound on the probability of all chains agreeing at a vertex $v$ via the probability of the all plus and minus agreeing at $v$.  The exponential coupling bound~\eqref{e:exponentialCoupling} gives the required  bounds in the general setup.  In particular, taking $U_v$ to denote the event that there exist two initial conditions such that the outputs of the barrier-dynamics operator $\mathcal{G}_{C\log\log n}$ disagree on the value of $v$, we get that $\P(U_v) = O(\log^{-C' d} n) \leq  \rho^{-6\Delta / \sobinf}$ by~\eqref{e:exponentialCoupling}, thereby establishing the analogue of equation~\eqref{eq-Uv-bound}.
Similarly, in Lemma~\ref{thm-tv-project} a point-wise monotone coupling bound is used to bound the total variation between two measures which can be replaced by the exponential grand coupling bound.

The proof of Theorem~\ref{mainthm-arbitrary} begins with an estimate showing that $t^\star \geq \frac38\log n$ which ensures that the dynamics on the components $\tilde{A}_i$ are well mixed in the $L^\infty$ distance.  On the lattice $\Z^d$ all such components are poly-logarithmic in size and so mix within time $O(\log\log n)$.  We may instead take
\begin{equation*}
  t^\star = \inf\left\{t > (\log\log n)^2 \;:\; \max_{\Lambda\in\sparse}\max_{\sigma_0} \ltwo_t(\Lambda,\sigma_0) \leq \log\log n \right\}\,.
\end{equation*}
which is sufficiently large to apply the argument in the proof of Theorem~\ref{mainthm-arbitrary} and so implies an upper bound on the mixing time of \begin{equation}\label{e:modifiedTstar}
\tmix(\epsilon) \leq t^\star + C\log \log n \, ,
\end{equation}
for large enough  $C>0$.  Results of~\cite{HaSi} imply that $\tmix(\epsilon) > c \log n$ for some $c>0$ and so $t^\star > c\log n - C\log \log n$. Since we have excluded the possibility that $\tmix(1-\epsilon) < (\log\log n)^2$ for large $n$ it follows from the proof of Theorem~\ref{mainthm-arbitrary} that $\tmix(1-\epsilon) \geq t^\star$ for large enough $n$.  Combined with~\eqref{e:modifiedTstar} this establishes cutoff with a window of order $\log\log n$.
\end{proof}

\subsection{Soft interactions}

We now construct grand couplings satisfying~\eqref{e:exponentialCoupling} when the temperature is sufficiently high.  We will focus on the heat-bath dynamics but note that analogous results apply for the Metropolis-Hastings chain (as detailed in the remark at the end of this section).

For a general spin system on a box $\Lambda\subset\Z^d$ with a state space $\spinset^\Lambda$ for some finite set $\spinset$ (see~\S\ref{sec:prelim-spin-systems} for definitions) define the following measure of ``temperature'' for the system:
\[\zeta_\mu = \sum_{s \in \spinset} \min_{\substack{x\in \Lambda \\ \eta\in\spinset^\Lambda}} \mu\Big(\sigma(x) = s \given \sigma(\Lambda\setminus\{x\})=\eta(\Lambda\setminus\{x\})\Big)\,.\]
It is easy to see that $0 \leq \zeta_\mu \leq 1$.
Roughly stated, values of $\zeta_\mu$  close to 1 correspond to systems with weak interactions and high temperatures.  

Given $\zeta_\mu$ one can define a grand coupling for heat-bath Glauber dynamics as follows.
Recall that a vertex $x$ is chosen to be updated by a rate-one Poisson process and with the new state being given as some function of $\sigma(\Lambda\setminus\{x\})$ and a random variable $U$ uniform on $[0,1]$.  When $U\leq \zeta_\mu$ we can set $\sigma(x)$ independently of the current configuration, selecting state $s$ with probability
\[
\min_{\eta \in \spinset^{\Lambda}} \mu\Big(\sigma(x) = s \given \sigma(V\setminus\{x\})=\eta(\Lambda\setminus\{x\})\Big)\,.
\]
For $U > \zeta_\mu$ the transition probabilities can be selected arbitrarily so long as the chain has the correct marginals.

The \emph{contact process} is a well known stochastic process which models the spread of infections (see e.g.~\cite{Liggett}).
 Denoted here by $\sY: \mathbb{R}_+ \times \Lambda \to \{0,1\}$, a vertex in state  1 corresponds to an infection which heals over time but may also spread to neighboring vertices.  More formally, vertices are set to 0 (healed) according to i.i.d.\ rate $h$ Poisson clocks.    Infections spread along edges at according to rate $\lambda$ Poisson clocks so that if one end is 1 when the clock rings, the other end is set to 1.  The following lemma bounds the disagreement process by a contact process and shows that under the following condition
\begin{equation}\label{e:zetaCondition}
\liminf_{n\to\infty} \zeta_\mu > \frac{2d}{2d+1},
\end{equation}
the probability of a disagreement converges to 0 exponentially fast.

\begin{lemma}\label{l:contact}
The disagreement process $\sX$ is stochastically dominated by a contact process $\sY$ with healing rate $\zeta_\mu$ and infection rate $1-\zeta_\mu$.  Then 
\begin{equation}\label{e:grandCoupling bound}
\max_{u \in \Lambda} \P(\sX_t(u)=1)\leq \exp\left(- \left(2d+1\right)\left(\zeta_\mu -\frac{2d}{2d+1}\right) t\right).
\end{equation}
\end{lemma}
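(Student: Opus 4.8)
The plan is to read off the generator of the disagreement process $\sX$ from the grand coupling, observe that it is an attractive $\{0,1\}^{\Lambda}$-valued process dominated by a subcritical contact process, and then extract the exponential bound by a one-line first-moment estimate. The key structural observation is how a Poisson clock ring at a site $x$ acts on $\sX$. By construction the new value at $x$ in each chain is a deterministic function of (that chain's spins on the neighbourhood $N(x)$, fresh common randomness $U,V$): when $U\le\zeta_\mu$ the new spin is drawn from the common lower-envelope law $\nu_0(s)=\zeta_\mu^{-1}\min_{\eta}\mu\big(\sigma(x)=s\mid\sigma(\Lambda\setminus\{x\})=\eta\big)$, independent of everything, so all chains agree at $x$ immediately afterward; when $U>\zeta_\mu$ the new spin depends on the chain only through $\sigma(N(x))$ (this uses the nearest-neighbour form of the interactions), so any two chains that already agreed on $N(x)$ again agree at $x$. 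Hence a ring at $x$ replaces $\sX(x)$ by $\one\{U>\zeta_\mu\}\bigvee_{y\sim x}\sX(y)$ (right-hand side read just before the ring), while rings elsewhere leave $\sX(x)$ fixed; since this update map is monotone in the pre-ring configuration, $\sX$ is attractive, its $0\to1$ rate at $x$ in a configuration $\sigma$ is $(1-\zeta_\mu)\,\one\{\exists y\sim x:\sigma(y)=1\}$, and its $1\to0$ rate is at least $\zeta_\mu$ (the extra $(1-\zeta_\mu)\one\{\sigma(y)=0\ \forall y\sim x\}$ only speeds healing). These rates are dominated, site by site, by those of the contact process $\sY$ on $\Lambda$ with recovery rate $\zeta_\mu$ and per-edge infection rate $1-\zeta_\mu$, whose $0\to1$ rate at $x$ is $(1-\zeta_\mu)\,|\{y\sim x:\sigma(y)=1\}|$ and whose $1\to0$ rate is exactly $\zeta_\mu$; since both processes are attractive and $\sX_0\equiv\sY_0\equiv1$, the standard monotone coupling of attractive spin systems (cf.\ \cite{Liggett}) gives $\sX_t\preceq\sY_t$ for all $t$, which is the first assertion.

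For the quantitative bound I would work directly with $m_t(x)=\P(\sX_t(x)=1)$. Differentiating and inserting the rates above,
\[
\tfrac{d}{dt}m_t(x)\ \le\ (1-\zeta_\mu)\sum_{y\sim x}m_t(y)\ -\ \zeta_\mu\,m_t(x),
\]
the first term because the $0\to1$ contribution is $(1-\zeta_\mu)\P\big(\sX_t(x)=0,\ \exists y\sim x:\sX_t(y)=1\big)\le(1-\zeta_\mu)\sum_{y\sim x}m_t(y)$ by a union bound, the second because the $1\to0$ contribution is at least $\zeta_\mu m_t(x)$. Writing $\overline m_t=\max_{x\in\Lambda}m_t(x)$ and evaluating this inequality at a site attaining the maximum — which controls the upper right Dini derivative of $\overline m_t$ — and using that each vertex of $\Lambda\subset\Z^d$ has at most $2d$ neighbours, we get $\tfrac{d}{dt}\overline m_t\le\big(2d(1-\zeta_\mu)-\zeta_\mu\big)\overline m_t=-(2d+1)\big(\zeta_\mu-\tfrac{2d}{2d+1}\big)\overline m_t$. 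Since $\overline m_0=1$, Gr\"onwall's inequality yields exactly~\eqref{e:grandCoupling bound}. (Alternatively, by self-duality of the contact process $\P^{\Lambda}(\sY_t(u)=1)=\P^{\{u\}}(\sY_t\neq\varnothing)\le\E^{\{u\}}|\sY_t|$, and the same first-moment computation bounds the right-hand side, again giving~\eqref{e:grandCoupling bound}.)

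The only genuinely delicate step is the first one: justifying that a clock ring at $x$ can create a disagreement at $x$ only by importing one from $N(x)$, equivalently that the $U>\zeta_\mu$ branch of the heat-bath grand coupling depends on the current chain solely through $\sigma(N(x))$ and on fresh, chain-independent randomness. This rests on the nearest-neighbour form of the interactions together with the $\zeta_\mu$-splitting of the single-site conditional law $\mu(\sigma(x)\in\cdot\mid\sigma(\Lambda\setminus\{x\}))$ into a common part of mass $\zeta_\mu$ and a remainder. Once this is in place, both the attractiveness/domination comparison and the Gr\"onwall estimate are routine.
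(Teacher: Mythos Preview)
Your proof is correct. The first half---identifying the effect of a clock ring on $\sX$ and deducing domination by the contact process---matches the paper's argument, though you spell out the generator comparison in more detail. (One minor imprecision: the ring at $x$ gives $\sX(x)\le\one\{U>\zeta_\mu\}\bigvee_{y\sim x}\sX(y)$ rather than equality, since even with $U>\zeta_\mu$ and a disagreeing neighbour the update may still happen to coincide across chains; this only helps, and your rate bounds are stated correctly.)

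Where you genuinely diverge from the paper is in extracting the pointwise bound~\eqref{e:grandCoupling bound}. The paper works with the \emph{total} infection count $W_t=\sum_u\sY_t(u)$ of the dominating contact process, derives $\E W_t\le|\Lambda|\exp(-(2d+1)(\zeta_\mu-\tfrac{2d}{2d+1})t)$, and then converts this into a per-vertex bound by a symmetry argument: on a transitive graph (a torus) the bound follows immediately, and for general $\Lambda\subset\Z^d$ one embeds into a large torus and uses monotonicity of the contact process in the edge set. You instead work directly with the per-vertex probabilities $m_t(x)=\P(\sX_t(x)=1)$, bound $\tfrac{d}{dt}m_t(x)$ sitewise, and pass to $\overline m_t=\max_x m_t(x)$ via a Dini-derivative argument. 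Your route is more direct---it bypasses both the detour through $\sY$ and the torus embedding---at the cost of the mild technicality of differentiating a maximum. The paper's route has the advantage that the total-count computation is entirely elementary and the symmetry reduction is a clean structural observation, but it does require the extra embedding step for non-transitive $\Lambda$.
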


\begin{proof}
If a vertex $u$ with $\sX_t(u)=1$ is updated then by construction with probability at least $\zeta_\mu$ the grand coupling selects the same spin in each the chains. That is, we set $\sX_t(u)=0$ with probability exceeding the healing rate of the contact process.
At the same time, a new disagreement can be created only when updating a vertex adjacent to a vertex with a disagreement.  In this case the probability of a new disagreement is at most $1-\zeta_\mu$ and hence this bounds the rate of the spread of infections.  Taken together this implies that $\sY_t$ stochastically dominates $\sX_t$. 

Define the number of infected sites as $W_t= \sum_{u\in\Lambda} \sY_t(u)$.   Vertices are healed at a rate of $W_t \zeta_\mu$ while new infections occur at a rate of at most $2d(1-\zeta_\mu) W_t$ which may be less when infected vertices are adjacent.  Hence
\begin{align*}
\frac{d \E W_t}{dt} \leq \E W_t (-\zeta_\mu + 2d(1-\zeta_\mu) ) = - (2d+1)\left(\zeta_\mu -\frac{2d}{2d+1}\right)\E W_t \, ,
\end{align*}
and so
\begin{equation}\label{e:infectionBound}
\E W_t \leq |\Lambda| \exp\left(- (2d+1)\left(\zeta_\mu -\frac{2d}{2d+1}\right) t\right) \, ,
\end{equation}
which is only meaningful when equation~\eqref{e:zetaCondition} holds.  If $\Lambda$ is transitive, for example a torus, then it would immediately follow that by symmetry that
\begin{equation}\label{e:infectionVertexBound}
\P(\sY_t(u)=1) \leq \exp\left(- (2d+1)\left(\zeta_\mu -\frac{2d}{2d+1}\right) t\right).
\end{equation}
For general $\Lambda \subset \Z^d$ we can embed $\Lambda$ into a large torus and note that the contact process is monotone in the edge set of the graph implying~\eqref{e:infectionVertexBound} for general graphs.  The stochastic domination then implies equation~\eqref{e:grandCoupling bound}, completing the lemma.
\end{proof}

With this construction we are now able to complete the generalization to high temperature systems with soft interactions.
 It is easy to verify that on any graph with maximal degree $\Delta$ satisfies $\zeta_\mu > \Delta/(\Delta+1)$ for
  \begin{compactitem}[\indent$\bullet$]
    \item ferromagnetic Potts model with $0 \leq \beta < \frac1{2\Delta}\log(1+\frac{q}{\Delta})$.
    \item anti-ferromagnetic Potts model with $0 > \beta > -\frac1{2\Delta}\log(1+\frac{q}{\Delta(q-1)})$.
    \item gas hard-core model with $\lambda < 1/\Delta$.
  \end{compactitem}
 A more direct analysis as in~\cite{HuberThesis}*{Section~5.2},\cite{Huber}*{Theorem~3} and~\cites{LV,Vigoda} (noting that in the last case the coupling can be extended to a suitable grand coupling as required) establishes the exponential coupling condition~\eqref{e:exponentialCoupling} in a somewhat broader regime for the ferromagnetic Potts, anti-ferromagnetic Potts and hard-core models, resp. Specifically, in the ferromagnetic and anti-ferromagnetic Potts models the requirements become $0\leq \beta < \frac12\log(\frac{\Delta}{\Delta-1})$ and $0 > \beta \geq -\frac12 \log(\frac{\Delta q}{\Delta q-1})$ resp., whereas 
 in the hard-core model the requirement on the fugacity becomes $\lambda<2/(\Delta-2)$. Combining these results with Theorem~\ref{t:general} establishes Theorem~\ref{mainthm-potts-Zd}. In the special case of the square lattice $\Z^d$ we can
refine the requirement $\lambda<1/(d-1)$ by exploiting the bipartite nature of the geometry. Indeed, the standard reordering of the partial order on the lattice turns the system into monotone, at which point the analysis of \S\ref{sec:arbitrary} becomes valid. Theorem~\ref{mainthm-hardcore-Zd} then follows from the work of Weitz~\cite{Weitz} which established strong spatial mixing whenever $\lambda < \frac{(\Delta-1)^{\Delta-1}}{(\Delta-2)^\Delta}$.

\begin{remark*}
The above analysis for the heat-bath dynamics can be easily extended to the Metropolis-Hastings chain.  The main alteration is to modify the definition of $\zeta_\mu$ and set
\[\zeta_\mu' = |\spinset|^{-1}\sum_{s \in \spinset} \min_{\substack{x\in \Lambda \\ s'\in\spinset\\ \eta\in\spinset^\Lambda}} \frac{\mu\Big(\sigma(x) = s \given \sigma(\Lambda\setminus\{x\})=\eta(\Lambda\setminus\{x\})\Big)}{\mu\Big(\sigma(x) = s' \given \sigma(\Lambda\setminus\{x\})=\eta(\Lambda\setminus\{x\})\Big)}\,.\]
Similarly to the heat-bath analysis, it is easy to verify that $0\leq \zeta_\mu' \leq 1$ and that one can construct a Markovian grand coupling for Metropolis where
each update is independent of the present state with probability $\zeta'_\mu$. 
With this definition we therefore again obtain that $\inf_n \zeta'_\mu > 2d/(2d+1)$ is a sufficient condition for cutoff on $\Z^d$.
\end{remark*}

\subsection{Proper colorings}
In the case of proper $q$-colorings condition~\eqref{e:zetaCondition} never holds
as we always have that $\zeta_\mu=0$ since having a neighbor with color $s$ precludes a vertex from having color $s$.  We instead use a modified grand coupling as defined in~\cite{Huber}*{Section~5}.  When a vertex $u$ is selected for updating we generate a random permutation of the $q$ colors and choose the first color which does not appear amongst its neighbors.  This way, in the setting of the lattice $\Z^d$ one of the first $2d+1$ colors must be chosen regardless of the local neighborhood.  Now~\cite{Huber}*{Theorem~2} and its proof imply that with $q\geq 4d(d+1)$ colors the probability of a disagreement in the grand coupling decays exponentially fast, thus establishing equation~\eqref{e:exponentialCoupling}. The proof of Theorem~\ref{mainthm-coloring-Zd} then follows from Theorem~\ref{t:general}.

\section{Cutoff for lattices with boundary conditions}\label{sec:cutoff-locations}

In this section we analyze the important case of the Glauber dynamics $(X_t)$ on the cube $\Z_n^d$ with plus boundary conditions with a proof that naturally extends to the setting of free boundary conditions as well.  Whilst boxes with periodic boundary conditions are transitive, imposing plus boundary conditions  means that vertices in different parts of the graph must be treated differently.  We capture this by characterizing vertices in terms of how many faces of the cube they are close to.

As well as showing cutoff we will determine the cutoff location in terms of spectral gaps for infinite volume dynamics.  Define $\HS_j = \HS_{j,d}$ to be the intersections of half-planes in $\Z^d$ given by $\Z_+^j \times \Z^{d-j}$ where $\Z_+=\{x\in\Z:x>0\}$. We let $\lambda_j^{(\infty)}=\lambda_{j,d}^{(\infty)}$ denote the spectral gap of the Glauber dynamics on $\HS_{j}$ with all-plus boundary conditions and let $\mu_j$ denote its stationary distribution.  With this definition we prove the following theorem to which Theorem~\ref{mainthm-plusBC2d} is a special case.
\begin{theorem}\label{t:plusGeneral}
The Glauber dynamics $(X_t)$ for the Ising model on $\Z_n^d$, the cube of side-length $n$ with all-plus boundary conditions exhibits cutoff at
\[
\frac12 \max_{0\leq j < d} (d-j) \lambda_{j,d}^{(\infty)}\log n\]
with a cutoff window of width $O(\log \log n)$.
\end{theorem}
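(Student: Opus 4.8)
The plan is to take the cutoff-and-$O(\log\log n)$-window conclusion as already supplied by the general theory and carry out only the new work: pinning down the cutoff \emph{location}. All-plus (equivalently, free) boundary conditions keep us inside the strong spatial mixing regime throughout $\beta<\beta_c$ (and, in general $d$, throughout the corresponding SSM regime), so by~\cites{MO,MO2} the log-Sobolev constant of Glauber dynamics on every subgraph of poly-logarithmic diameter is bounded below; thus $\sobinf=\Omega(1)$, $r\asymp\log n$, $\rho\asymp\log n$, and every block $A_i^+=B_{A_i}(r)$ occurring in a sparse set has $\diam(A_i^+)\le\log^2 n$ and, by the polynomial growth of $\Z^d$, only $(\log n)^{O(1)}$ vertices. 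Corollary~\ref{maincor-bc} then already gives cutoff with an $O(\log\log n)$ window, and by the proof of Theorem~\ref{mainthm-arbitrary} the location is the time $t^\star$ of~\eqref{eq-tstar-def}, i.e.\ where $\max_{\Lambda\in\sparse}\max_{\sigma_0}\ltwo_t(\Lambda,\sigma_0)$ crosses $\rho\asymp\log n$. Everything reduces to
\[
t^\star \;=\; \tfrac12\max_{0\le j<d}\tfrac{d-j}{\lambda_{j,d}^{(\infty)}}\log n \;+\; O(\log\log n),
\]
which I would establish by matching upper and lower bounds. (The free-boundary case is verbatim with ``all-plus'' replaced by ``free''.)

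\emph{Classifying vertices and localizing the gap.} Call $v\in\Z_n^d$ of \emph{type $j$} if exactly $j$ of the $d$ pairs of opposite faces of the cube lie within distance $\log^2 n$ of $v$; a routine count gives $n^{d-j}(\log n)^{O(1)}$ vertices of type $j$ for each $0\le j\le d$, of which only $n^{o(1)}$ have type $d$. The geometric point is that a block $A_i^+$ of diameter $\le\log^2 n$ can be adjacent to a face only if every vertex of $A_i$ lies within $\log^2 n$ of it; hence if $A_i^+$ meets $j'$ faces then $j'\le j(v)$ for all $v\in A_i$, and $A_i^+$ with its induced (boundary) external field is isometric to a diameter-$\le\log^2 n$ region inside the half-space intersection $\HS_{j',d}$ carrying the all-plus condition on its $\HS_{j',d}$-faces and the free condition elsewhere. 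Feeding this into the SSM control of spectral gaps, together with the quantitative convergence of finite-volume gaps to infinite-volume gaps at rate $n^{-\Omega(1)}$ in $r\asymp\log n$ --- the half-space analogue of the estimate proved for the torus in~\cite{LS1}, whose rate is crucial for keeping the window at $O(\log\log n)$ --- yields $\lambda(A_i^+)\ge\lambda_{j',d}^{(\infty)}-n^{-\Omega(1)}$ with $\sob(A_i^+)\ge\sobinf$; when $A_i=\{v\}$ sits deep inside a type-$j$ region (far from the ``edges'' of the corresponding corner) this sharpens to the two-sided $\lambda(B_v(r))=\lambda_{j,d}^{(\infty)}+n^{-\Omega(1)}$.

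\emph{Upper bound on $t^\star$.} For any sparse $\Lambda$ with components $A_1,\dots,A_L$, taking marginals and then applying Theorem~\ref{thm-l2-sobolev} on each $A_i^+$ gives $\lltwo_t(A_i,\sigma_0)\le e^{O(\log\log n)}\,e^{-2\lambda(A_i^+)t}$, since $\log\log(1/\pi_i(\sigma_0))=O(\log\log n)$ (as $|A_i^+|=(\log n)^{O(1)}$) and $\gap(A_i^+)/\sob(A_i^+)=O(1)$. Grouping components by the number $j$ of faces they meet and using the count,
\[
\max_{\Lambda\in\sparse}\max_{\sigma_0}\ltwo_t(\Lambda,\sigma_0)\;\le\; e^{O(\log\log n)}\sum_{j=0}^{d} n^{d-j}(\log n)^{O(1)}\,e^{-2(\lambda_{j,d}^{(\infty)}-n^{-\Omega(1)})t}\,.
\]
Taking $t=t^+:=\tfrac12\max_{0\le j<d}\tfrac{d-j}{\lambda_{j,d}^{(\infty)}}\log n+C\log\log n$ and letting $j^\star$ be the maximizer, the $j$-th summand is at most $n^{\,(d-j)-(d-j^\star)\lambda_{j,d}^{(\infty)}/\lambda_{j^\star,d}^{(\infty)}}(\log n)^{O(1)-2C\lambda_{j,d}^{(\infty)}}$; the exponent of $n$ is $\le0$ by the choice of $j^\star$, so for $C$ large the sum is $\le\rho$, whence $t^\star\le t^+$. (One also records $\tmix=O(\log n)$ from the SSM bounds, putting $t^+$ in the admissible range of Theorem~\ref{thm-tv-project}.)

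\emph{Lower bound on $t^\star$, and the main obstacle.} Let $j^\star=\arg\max_{0\le j<d}\tfrac{d-j}{\lambda_{j,d}^{(\infty)}}$ and take $\Lambda$ to be a maximal $3r$-separated family of singletons $\{v\}$ sitting deep inside type-$j^\star$ regions; there are $\asymp n^{d-j^\star}(\log n)^{-O(1)}$ of them, and since $\ltwo_{t^-}(\Lambda,\sigma_0)=\sum_v\lltwo_{t^-}(\{v\},\sigma_0)$ it suffices to produce for each such $v$ a start $\sigma_0$ with $\lltwo_{t^-}(\{v\},\sigma_0)\ge n^{-(d-j^\star)}(\log n)^{O(1)}$, where $t^-:=\tfrac12\tfrac{d-j^\star}{\lambda_{j^\star,d}^{(\infty)}}\log n-C\log\log n$. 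Running the dynamics on $B_v(r)$ (free boundary, induced field) from all-plus and all-minus under the monotone grand coupling and writing $X^\pm_t$ for the two copies, $\mu^\emptyset_{B_v(r)}(\sigma(v)=+1)$ lies between the laws of $X^+_t(v)$ and $X^-_t(v)$, so one of the extreme starts gives
\[
\lltwo_t(\{v\},\sigma_0)\;\ge\;\big(\P_{\sigma_0}(X_t(v)=+1)-\mu^\emptyset_{B_v(r)}(\sigma(v)=+1)\big)^2\;\ge\;\tfrac14\,\P\big(X^+_t(v)\ne X^-_t(v)\big)^2\,.
\]
The task thus comes down to a Wilson-type lower bound $\P\big(X^+_t(v)\ne X^-_t(v)\big)\ge(\log n)^{-O(1)}\,e^{-\lambda(B_v(r))\,t}$ --- that the slowest relaxation mode of the half-space block is \emph{detected at its centre}. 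This is precisely the step that uses the monotonicity of the Ising model in an essential way, and I expect it to be the \textbf{main obstacle}: one must exhibit a suitable (approximate) eigenfunction of the half-space Glauber dynamics whose value at $v$ and whose $L^2(\mu^\emptyset_{B_v(r)})$-norm are simultaneously controlled, so that conditioning on the single coordinate $v$ loses only a $(\log n)^{O(1)}$ factor. Granting it, $\ltwo_{t^-}(\Lambda)\ge n^{d-j^\star}(\log n)^{-O(1)}e^{-2\lambda(B_v(r))t^-}\to\infty$, so $t^\star\ge t^-$; combined with the upper bound and Corollary~\ref{maincor-bc} this yields cutoff at $\tfrac12\max_{0\le j<d}\tfrac{d-j}{\lambda_{j,d}^{(\infty)}}\log n$ with an $O(\log\log n)$ window, as asserted (Theorem~\ref{mainthm-plusBC2d} being the case $d=2$).
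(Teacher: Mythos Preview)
Your overall strategy---classify vertices by the number $j$ of nearby faces, argue that the $\asymp n^{d-j}$ vertices of type $j$ contribute $e^{-2\lambda_{j,d}^{(\infty)}t}$ apiece to $\ltwo_t$, and read off $t^\star$---matches the paper's. The upper bound is essentially right, modulo the choice of reference blocks (the paper maps sparse components into $\log^3 n$-sided cubes $\block^{(j)}$ with \emph{periodic} conditions on the non-plus faces rather than free-boundary balls $B_v(r)$, but this is a technicality). There are, however, two genuine gaps.

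First, the rate $n^{-\Omega(1)}$ you claim for $|\lambda(A_i^+)-\lambda_{j,d}^{(\infty)}|$ is not what is actually available from~\cite{LS1}, and the paper does not use it. What the paper proves (Lemma~\ref{lem-L2-bounds}) is only $|\lambda_j(m)-\lambda_j^{(\infty)}|=O(\log\log n/\log n)$ for $m=\log^3 n$, and it does so by a bootstrapping argument specific to this setting: the sparse support of the dynamics on a block of side $m$ is re-embedded into blocks of side $m'\in[\tfrac45 m,\tfrac65 m]$, yielding $|\lambda_j(m)-\lambda_j(m')|=O(m^{-1/3}\log m)$, which is then summed over a geometric sequence of scales. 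This rate is exactly enough for the $O(\log\log n)$ window; your claimed $n^{-\Omega(1)}$ would need a separate argument.

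Second, and more seriously, the obstacle you flag in the lower bound is real, and the singleton construction does not circumvent it. You need the slow mode of the half-space block to be detected by the marginal at a single site $v$, and nothing in the general theory guarantees this; the projection to $\{v\}$ could in principle kill the $\lambda_{j,d}^{(\infty)}$-eigenfunction. The paper sidesteps the issue entirely by taking $\Lambda$ in the lower bound to consist not of singletons but of disjoint \emph{inner blocks} $B_i$ of side $\asymp\log^3 n$. Then the standard bound $e^{-\lambda_j(m)t}\le 2\max_{x_0}\|\P_{x_0}(X^{*,j}_t\in\cdot)-\mu^{*,j}\|_\tv$ applies with no projection loss, and Proposition~\ref{prop-l1-l2} (applied to the product of block chains, whose $L^\infty$-condition is verified via the log-Sobolev bound) converts $\sum_i\lltwo_t^{(j)}$ into a total-variation lower bound. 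The matching lower bound on $\lltwo_t^{(j)}$ at the block level then comes from coupling the block dynamics to the infinite half-space via disagreement percolation and testing against a weighted magnetization functional on $\HS_j$. So the fix to your argument is: replace the $3r$-separated singletons by disjoint $\log^3 n$-blocks of type $j^\star$ and work with the $L^2$ distance of the whole block rather than a single spin.
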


First we begin by defining blocks with a mixture of plus and periodic boundary conditions.  Fix $m=\lfloor \log^3 n \rfloor$ and for each $0\leq j \leq d$ let $\block^{(j)}$ be the graph $\Z_m^d$ where we impose plus boundary conditions on the first $j$ coordinates and periodic boundary conditions on the remaining $d-j$ coordinates.  In the case of $d=2$ then $\block^{(0)}$ is the torus $(\Z/m\Z)^2$, $\block^{(1)}$ corresponds to a cylinder with plus boundaries at then ends while $\block^{(2)}$ corresponds to a box with all plus boundary conditions.

Let $B=B_j$ denote the subset of $\block^{(j)}$ given by
\[
B_j = \left\{1,\ldots,\frac{2m}{3}\right\}^{j}\times\left\{\frac{m}{6},\ldots,\frac{5m}{6}\right\}^{d-j}~,
\]
which we will refer to as the inner block.  Note that the inner blocks $B_j$ touch faces of the coordinates where there is a plus boundary condition.  We define
\begin{align}
  \label{eq-mjt-def}
  \lltwo^{(j)}_t &= \lltwo^{(j)}_t(m) := \max_{x_0} \left\| \P_{x_0}\big(X^{*,j}_t(B) \in \cdot \big)  - \mu^{*,j}|_{B} \right\|^2_{L^2(\mu^*|_{B})}\,,
\end{align}
where $X^{*,j}_t$ denotes the Glauber dynamics on $\block^{(j)}$ and $\mu^{*,j}$ its stationary distribution.

For $i\in \Z_n$ we define the following intervals:
\[
\bar J_i=\begin{cases} \{1,\ldots,m\} & 1\leq i \leq \frac{m}{2},\\
\{i-\frac{m}{2},\ldots,i+\frac{m}{2}-1\} & \frac{m}{2} +1 \leq i \leq n-\frac{m}{2},\\
\{n-m+1,\ldots,n\} & n-\frac{m}{2} + 1 \leq i \leq n,
\end{cases}
\]
and
\[
J_i=\begin{cases} \{1,\ldots,\frac{2m}{3}\} & 1\leq i \leq \frac{m}{2},\\
\{i-\frac{m}{6},\ldots,i+\frac{m}{6}-1\} & \frac{m}{2} +1 \leq i \leq n-\frac{m}{2},\\
\{n-\frac{2m}{3}+1,\ldots,n\} & n-\frac{m}{2} + 1 \leq i \leq n.
\end{cases}
\]
For each $x=(x_1,\ldots,x_d)\in \Z_n^d$ let $\bar J_x$ denote the block
\[
\bar J_{x_1}\times \ldots,\times \bar J_{x_d}~,
\]
and define the inner block $J_x$ similarly.  We classify vertices by the number of sides they are close to via $\varphi(x)\deq\#\{1\leq i \leq d: \max\{x_i,n+1-x_i\} \leq \frac{m }{2}\}$.  Then for each $x\in \Z_n^d$ there exists a graph isomorphism $\psi_x$ from $\bar J_x$ into $\block^{(\varphi(x))}$ such that $J_x$ is mapped to $B\subset \block^{(\varphi(x))}$.  It can easily be checked that for each $0\leq j \leq d$ and large $n$,
\begin{equation}\label{e:vertexTypeSize}
\frac12 m^j n^{d-j}  \leq \mathcal{N}_j \leq 2^d m^j n^{d-j}~,
\end{equation}
where $\mathcal{N}_j=\left| \{x:\varphi(x)=j \} \right|$ denotes the number of vertices of type $j$.

By Theorem~\ref{thm-log-sobolev-torus} the dynamics on all relevant rectangles has log-Sobolev constant at least $\sobinf$.

\subsection{Upper bound on the $L^1$-distance}\label{sec:plusUBound}

With the framework of \S\ref{sec:arbitrary} as our starting point, define the barrier dynamics as before except we replace balls in the graph distance with balls in the $L^\infty$-distance on the lattices as these are rectangles whose log-Sobolev constant can be bounded using Theorem~\ref{thm-log-sobolev-torus}.  With this minor modification of Lemma~\ref{lem-sparse-prob} we get the following lemma.

\begin{lemma}\label{lem-sparse-prob-plus}
Let $\mathcal{G}_s$ be the barrier-dynamics operator on a $d$-dimensional rectangle with side-lengths at most $n$ and any combination of arbitrary and periodic boundary conditions.  Let $W_s$ be its update sequence up to time $s$ for $s = 100 d^2 \sobinf^{-2}\log \log n$ and let $\sparse$ be the collection of sparse sets of $G$.
For sufficiently large $n$ we have $\P(\Lambda_{W_s} \in \sparse) \geq 1- n^{-10d}$ and furthermore $\P(v\in\Lambda_{W_s}) \leq \log^{-10d} n$ for every $v\in V$.
\end{lemma}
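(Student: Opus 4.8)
The plan is to adapt the proof of Lemma~\ref{lem-sparse-prob} almost verbatim, with the two changes indicated in the statement: balls are taken in the $L^\infty$-distance on the rectangle rather than the graph distance, and the target time $s$ is now $100 d^2\sobinf^{-2}\log\log n$ rather than $\asymp\log n$, reflecting that the relevant local neighborhoods here have polylogarithmic (not $n^{o(1)}$) size. First I would fix $r$ to be the $L^\infty$-radius $\lfloor 10 d \sobinf^{-1}\log\log n\rfloor$, so that an $L^\infty$-ball $B_v(r)$ is a sub-rectangle of side-length $O(\sobinf^{-1}\log\log n)$; crucially such a rectangle is one of the shapes covered by Theorem~\ref{thm-log-sobolev-torus}, so the log-Sobolev constant of Glauber dynamics on it (with free, or free/periodic, boundary) is at least $\sobinf$, and its volume $\rho := \max_v|B_v(r)|$ is at most $(\log\log n)^{O(d)} = \log^{o(1)} n$.

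Next I would reprove the single-vertex estimate. Let $U_v$ be the event that the barrier dynamics run for time $s$ disagrees at $v$ under the all-plus and all-minus inputs; by monotonicity of the Ising model the support $\Lambda_{W_s}$ is contained in $\bigcup\{B_v(r): U_v \text{ holds}\}$ exactly as in~\eqref{eq-support-B-cover}. To bound $\P(U_v)$ I would run the two extremal copies of Glauber dynamics on $B_v(r)$ with free (or free/periodic) boundary, coupled monotonically, and apply Theorem~\ref{thm-l2-sobolev}: since $\log\log(1/\pi(\underline 1)) \le \log|B_v(r)| \le \log\rho = O(\log\log\log n)$ while $\gap\ge\sob\ge\sobinf$ and $s \ge 100 d^2\sobinf^{-2}\log\log n \gg \sobinf^{-2}\log\rho$, the $L^2$-distance at time $s$ is at most $\exp(1-\sobinf(s - (4\sobinf)^{-1}\log\rho)) \le \log^{-10 d}n \le n^{-10d}$, say, with room to spare; hence $\P(U_v)\le \rho^{-C}$ for any fixed $C$ once $n$ is large (the content of the analogue of~\eqref{eq-Uv-bound}), and in particular $\P(v\in\Lambda_{W_s}) \le |B_v(r)|\,\rho^{-C} \le \log^{-10d}n$.

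For the sparsity statement I would repeat the two bad-event arguments of Lemma~\ref{lem-sparse-prob} verbatim, replacing $\rho\ge\log n$ throughout by $\rho\ge r\asymp\log\log n$ and replacing the union-bound prefactor $n$ by $n^d$ (the vertex count of the rectangle): define $\Xi$ (a chain of $\ell=\lfloor\log\log n\rfloor$ witness vertices, pairwise $L^\infty$-distances in $(2r,5r]$) and $\Xi'$ (a chain of $\ell'=\lfloor 4 d^{-1}\sobinf\,\log\log n/\log\rho\rfloor$ witnesses with consecutive $L^\infty$-distance $\le 5r$), observe that along such a chain the balls $B_{v_i}(r)$ are disjoint so the $U_{v_i}$ are independent, and bound $\P(\Xi\cup\Xi')\le n^{-10d}$ using $\P(U_{v_i})\le\rho^{-C}$ for $C$ large enough. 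Conditioning on the complement, the clustering of $\bigcup_{U_v}B_v(r)$ into components obtained by merging $B_u(r),B_v(r)$ whenever their $L^\infty$-distance is $\le 5r$ yields components of size $\le\rho^3\log n$, $L^\infty$-diameter $\le 2r\ell'+2r = O(\log^2\log n)\ll\frac12\log^2 n$, and pairwise distance $> 3r \ge 25 d\,\sobinf^{-1}\log\log n$, i.e.\ a sparse set. The main obstacle — really a bookkeeping obstacle rather than a conceptual one — is checking that with the new, much smaller value of $r$ the three sparse-set requirements of Definition~\ref{def-sparse-set} are still met with comfortable slack (the diameter bound is the tightest, since $2r\ell'$ must stay well below $\frac12\log^2 n$; this is fine because $r\ell' = O(\log^2\log n)$), and that the exponents are chosen so that all the error probabilities beat $n^{-10d}$ and all the vertex-marginal bounds beat $\log^{-10d}n$; this just amounts to taking the constant $100 d^2$ in $s$ large enough, exactly as the constant $10\Delta$ was chosen in the original lemma.
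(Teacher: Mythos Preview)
Your overall plan---adapt the proof of Lemma~\ref{lem-sparse-prob} with $L^\infty$-balls---is exactly what the paper does, but you have misidentified which parameter changes. You take the barrier radius to be $r\asymp\log\log n$, whereas the paper keeps $r=\lfloor 20d^2\sobinf^{-1}\log n\rfloor$ (this is the ``as before'' in the text preceding the lemma, and is made explicit a few lines later where $A_i^+=B_{A_i}(r)$ with this value of $r$). The reason $s\asymp\log\log n$ suffices is not that $r$ shrank, but that on the lattice an $L^\infty$-ball of radius $r\asymp\log n$ has volume $\rho\le(2r+1)^d\asymp(\log n)^d$, so $\log\rho\asymp d\log\log n$; the hypothesis $s\ge 7\Delta\sobinf^{-2}\log\rho$ of Lemma~\ref{lem-sparse-prob} then becomes $s\ge(14+o(1))d^2\sobinf^{-2}\log\log n$, which $s=100d^2\sobinf^{-2}\log\log n$ satisfies. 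With the correct $r$ the proof of Lemma~\ref{lem-sparse-prob} goes through verbatim after replacing the vertex count $n$ by $n^d$, yielding $\P(U_v)\le\rho^{-12d/\sobinf}$, hence $\P(v\in\Lambda_{W_s})\le\rho^{1-12d/\sobinf}\le(\log n)^{-10d}$, and the chain arguments for $\Xi,\Xi'$ give failure probability $\le n^{-10d}$.

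Your choice of $r\asymp\log\log n$ is not merely a different packaging: it defines a different operator $\mathcal{G}_s$ than the one in the lemma, and more importantly it would break the downstream argument. The subsequent product-chain coupling (the analogue of Lemma~\ref{lem-sparse-product} used to derive~\eqref{e:stationaryDifference}--\eqref{e:upperBoundPlusProduct}) runs for time $t\le 2d\sobinf^{-1}\log n$ and relies on the speed-of-sound bound over $A_i^+=B_{A_i}(r)$; this requires $r\asymp\log n$. There is also a bookkeeping slip in your single-vertex estimate: with your $\rho\asymp(\log\log n)^d$, the inequality $|B_v(r)|\rho^{-C}\le\log^{-10d}n$ is false for any fixed $C$ (and ``$\log^{-10d}n\le n^{-10d}$'' is simply wrong). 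The actual bound you need comes from $\exp(-\sobinf s)\asymp(\log n)^{-100d^2/\sobinf}$, not from powers of your $\rho$.
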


Let $\Lambda\in \sparse$ be a sparse subset of $\Z_n^d$ partitioned into $L=L(\Lambda)$ components $\{A_i\}_{i=1}^L$ according to Definition~\ref{def-sparse-set}.  For each $A_i$ let $y_i$ be a representative element chosen according to any arbitrary rule.  We will classify the components according to the value of $\varphi(y_i)$ and define
\[
L_j = \#\{i:\varphi(y_i) = j\}.
\]
By the diameter bound on the components $A_i$ we have that  $A_i\subset J_{y_i}$.

For each $i$ let $\block_i$ be a copy of $\block^{(\varphi(y_i))}$ and let $G^*$ denote the graph given by the union of these components, all disconnected from each other inheriting the boundary conditions of the $\block_i$.  By a slight abuse of notation will let $\psi_i=\psi_{y_i}$ denote the map taking $\bar J_{y_i}$ into the copy $\block_i$.  By construction it also maps $J_{y_i}$ into the corresponding inner block $B_i^*$.  Finally, denote $\Lambda^*$ as the image of $\Lambda$.

Let $X_t^*$ denote the Glauber dynamics on $G^*$ and $\mu^*$ its stationary distribution.  We couple $X_t$ and $X_t^*$ as follows:  Let $A_i^+=B_{A_i}(r)$ where $r = \lfloor 20 d^2 \sobinf^{-1} \log n \rfloor$ as in equation~\eqref{eq-r-def}.  Note that by the sparseness properties of the $A_i$ the $A_i^+$ are disjoint.  Now we couple the two dynamics so that if $x$ is in some $A_i^+$ then we couple the updates of $x$ and $\psi_i(x)$.  For all other vertices in $\Z_n^d$ and $G^*$ we can take the updates to be independent and we set initial conditions such that
\begin{equation}\label{e:blockInitialConditions}
X_0(x)=X^*_0(\psi_i(x)) \hbox{ for all $i$ and } x\in A^+_i~.
\end{equation}
Repeating the proof of Lemma~\ref{lem-sparse-product} with minor modifications we have that
\begin{equation}\label{e:stationaryDifference}
 \Big| \left\|\P_{x_0}(X_t(\Lambda)\in\cdot)-\mu|_{\Lambda} \right\|_\tv -
 \left\|\P_{x_0^*}(X^*_t(\Lambda^*) \in\cdot)-\mu^*|_{\Lambda} \right\|_\tv \Big| < n^{-2d/3}\,
\end{equation}
and hence applying Lemma~\ref{lem-xt-bound-aw} for $t\leq 2d \sobinf^{-1}\log n$ we have that,
\begin{equation}\label{e:upperBoundPlusProduct}
 \left\|\P_{x_0}(X_{t+s}\in\cdot)-\mu \right\|_\tv \leq
\E \left\|\P_{x_0^*}(X^*_t(\cup_i B_i^*) \in\cdot)-\mu^*|_\Lambda \right\|_\tv  + 2n^{-2d/3}\, .
\end{equation}
As $X_t^*$ is a product chain we can apply  Proposition~\ref{prop-l1-l2}  giving us that
\begin{equation}\label{e:upperBoundProductChain}
\max_{x_0^*} \left\|\P_{x_0^*}(X^*_t(\cup_i B_i^*) \in\cdot)-\mu^*|_\Lambda \right\|_\tv \leq \sqrt{\mbox{$\sum_{j=0}^d L_j \lltwo^{(j)}_t$}}\, .
\end{equation}
The randomness here is in the expected values of $L_j$ which we bound using Lemma~\ref{lem-sparse-prob-plus},
\begin{equation}\label{e:LjBound}
\E L_j \leq \sum_{v\in \Z_n^d:\varphi(v)=j} \P(v\in\Lambda_{W_s}) \leq \mathcal{N}_j \log^{-10d}n \, .
\end{equation}
Combining equations~\eqref{e:vertexTypeSize}, \eqref{e:upperBoundPlusProduct}, \eqref{e:upperBoundProductChain} and \eqref{e:LjBound} using Jensen's inequality we have that
\begin{equation}\label{e:upperBoundPlusFinal}
\max_{x_0} \left\|\P_{x_0}(X_{t+s}\in\cdot)-\mu \right\|_\tv \leq
\sqrt{\mbox{$\sum_{j=0}^d 2^d   \lltwo^{(j)}_t n^{d-j} \log^{-7d}n$}} + 2n^{-2d/3}\, .
\end{equation}

\subsection{Lower bound on the $L^1$-distance}

For a lower bound on the total variation we embed small blocks into $\Z_n^d$ of type roughly proportional to $\mathcal{N}_j$.  Taking the vertices
\[
(y_i)_{i=1,\ldots,L}:=\left\{(x_1,\ldots,x_d)\in\Z_n^d:(x_j-1)/(2m) \in\Z,1\leq j \leq d\right\} \subset \Z_n^d \, ,
 \]
the blocks $\bar J_{y_i}$ are disjoint and $L_j=\left| \{i:\varphi(y_i)=j \} \right|$, the number of vertices of type $j$, satisfies
\[
L_j \geq 3^{-d}m^{-d} n^{d-j} \, ,
\]
for large $n$.

For each $i$ let $\block_i$ be a copy of $\block^{(\varphi(y_i))}$ and let $G^*$ denote the graph given by the union of these components, all disconnected from each other.  Let $\psi_i=\psi_{y_i}$ denote the map taking $\bar J_{y_i}$ into $\block_i$.  By construction it maps $J_{y_i}$ into the corresponding inner block $B_i$.  Denote $\Lambda=\cup_i J_{y_i}$ and $\Lambda^*=\cup_i B_i$.

Let $X_t^*$ denote the Glauber dynamics on $G^*$, inheriting the boundary conditions of the blocks $\block_i$, and let $\mu^*$ denote its stationary distribution.  As in the upper bound we couple the dynamics on $G$ and $G^*$ so that if $x$ is in some $\bar J_{y_i}$ then we couple the updates of $x$ and $\psi_i(x)$ and for all other vertices in $\Z_n^d$ we take the updates to be independent.  We will assume our initial conditions satisfy $X_0(x)=X^*_0(\psi_i(x))$  for all $i$ and $x\in \bar J_{y_i}$.

As in the case of the upper bound, with minor modifications of Lemma~\ref{lem-sparse-product} and Lemma~\ref{lem-xt-bound-aw} we have that for $t\leq 2 \sobinf^{-1}d\log n$,
\[
\max_{x_0} \left \| \P_{x_0}(X_t \in \cdot) - \mu \right \|_{\tv} \geq \max_{x_0^*}  \left \| \P_{x_0^*}(X_t^*(\Lambda^*) \in \cdot) - \mu^*_{\Lambda^*} \right \|_{\tv} - 2n^{-2d/3} \,.
\]
It is sufficient to consider this range of $t$ since a minor modification of the proof of equation~\eqref{e:tmixUpperBound} implies that for all $\delta>0$ and large enough $n(\delta)$ we have that $\tmix(\delta) \leq  2\sobinf^{-1}d\log n$.  Now, similarly to equation~\eqref{e:LInftyBoundBlock}, when $t$ is of order $\log n$ then
\[
\left\|\P_{x_0}(X_t^*(B_i)\in\cdot) - \mu^*|_{B_i} \right\|_{L^\infty(\pi_i|_{B_i})}
=o(1).
\]
Hence applying Proposition~\ref{prop-l1-l2} we have that
\begin{align}\label{e:lowerBoundPlusFinal}
&\max_{x_0} \left \| \P_{x_0}(X_t \in \cdot) - \mu \right \|_{\tv} \nonumber\\
&\qquad\geq
\Psi\bigg(\sum_{i=1}^L \max_{x_0}\left\|\P_{x_0}(X_t^*(B_i)\in\cdot) - \mu^*|_{B_i} \right\|_{L^2(\mu^*)}^2\bigg)-o(1) \nonumber\\
&\qquad\geq \Psi\bigg(\sum_{j=0}^d 3^{-d} \lltwo^{(j)}_t n^{d-j} \log^{-3d}n\bigg)-o(1)\,,
\end{align}
where $\Psi(x) = 2\Phi(\frac12\sqrt{x})-1$.

\subsection{Existence of Cutoff}

We are now able to prove establish the existence of cutoff for $X_t$.  Define the approximate mixing time as
\[
t^\star = \inf\left\{t > 0 \;:\; \sum_{j=0}^d 3^{-d} \lltwo^{(j)}_t n^{d-j} \log^{-3d}n \leq \log n \right\}\,.
\]
Let $\lambda_j(m)$ denote the second eigenvalue of the dynamics on $\mathcal{T}_j$.  Then $\sobinf^{-1}\leq \lambda_j(m) \leq 1$ and so by Theorem~\ref{thm-l2-sobolev} we have that
\begin{align}
  \label{eq-mjt-ubound}
  \lltwo^{(j)}_t &= \max_{x_0} \left\| \P_{x_0}\big(X^{*,j}_t(B_j) \in \cdot \big)  - \mu^{*,j}_{B_j} \right\|^2_{L^2(\mu^*_{B_j})}\nonumber\\
  &\leq \exp(-\lambda_j(m)t + O(\sobinf^{-1}\log\log n)) \, ,
\end{align}
and so $t^\star \leq (d+o(1)) \sobinf^{-1}\log n$.  Hence we can apply equation~\eqref{e:upperBoundPlusFinal} to show that
\begin{equation*}
\max_{x_0} \left\|\P_{x_0}(X_{t^\star+s}\in\cdot)-\mu \right\|_\tv \leq
\sqrt{d 6^d  \log^{1-4d}n} + 2n^{-2d/3}=o(1)\, .
\end{equation*}
while by equation~\eqref{e:lowerBoundPlusFinal} we have that
\begin{equation*}
\max_{x_0} \left \| \P_{x_0}(X_{t^\star} \in \cdot) - \mu \right \|_{\tv} \geq \Psi\left(\log n\right)-o(1)=1-o(1)\,,
\end{equation*}
where $\Psi(x) = 2\Phi(\frac12\sqrt{x})-1$.  It follows that $\tmix(\epsilon)$ is between $t^\star$ and $t^\star+s$ for any $0<\epsilon<1$.
Recalling that results of \cite{HaSi} imply that the $L^1$ mixing time of the Glauber dynamics for the Ising model on $(\Z/n\Z)^d$ has order at least $\log n$, we have that $t^\star$ is also of order at least $\log n$ and hence $X_t$ has cutoff at $t^\star$ with a window of width at most $s=100 d^2 \sobinf^{-2}\log \log n$.

\subsection{Cutoff Location}

The rest of the section concerns relating the quantities $\lltwo^{(j)}_t$ to eigenvalues of infinite volume Glauber dynamics to estimate $t^\star$ and the cutoff location.


\begin{lemma}\label{lem-L2-bounds}
There exist $\hat{\lambda}_j$ such that $\lambda_j(m)\to \hat{\lambda}_j$ as $m\to\infty$.  Moreover, there exists $c=c(\beta,j,d)>0$ such that for $m = \log^3 n$,
\[
|\lambda_j(m) - \hat{\lambda}_j| \leq \frac{c\log\log n}{\log n}
\]
and for $0 \leq t \leq 2d\sobinf^{-1}\log n$ and $n$ sufficiently large,
\begin{align}\label{eq-L2-bounds}
 \mathrm{e}^{-\hat{\lambda}_j t-c\log\log n} - 32 n^{-4d/3} \leq \lltwo^{(j)}_t(m) \leq  \mathrm{e}^{-\hat{\lambda}_j t+c\log\log n}\,.
\end{align}
\end{lemma}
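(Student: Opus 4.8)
The plan is to treat the two claims of the lemma separately: the convergence $\lambda_j(m)\to\hat\lambda_j$ with its rate, and then the two-sided estimate~\eqref{eq-L2-bounds} on $\lltwo^{(j)}_t(m)$. I take $\hat\lambda_j$ to be (the appropriate multiple of) the spectral gap of the infinite-volume Glauber dynamics on $\HS_j=\Z_+^j\times\Z^{d-j}$ with all-plus boundary — a quantity well defined through its Dirichlet form, positive throughout the strong spatial mixing regime, and satisfying $\hat\lambda_j\le\hat\lambda_{j'}$ for $j'<j$ (plus boundary only strengthens correlations). The convergence $\lambda_j(m)\to\hat\lambda_j$, with an exponential rate in $m$, is the finite-size-scaling statement established in~\cite{LS1} for the torus and carries over to the box $\block^{(j)}$ with mixed plus/periodic boundary. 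The upper direction $\lambda_j(m)\le\hat\lambda_j+o(1)$ follows by taking, for each $\epsilon>0$, a mean-zero local test function $g$ for the $\HS_j$-dynamics supported in a fixed ball around the corner of $\HS_j$ where the $j$ plus-faces meet, with Rayleigh quotient $\le\hat\lambda_j+\epsilon$; for $m$ large this ball embeds near the matching corner of $\block^{(j)}$, the remaining parts of $\partial\block^{(j)}$ lie at distance $\asymp m$, so by strong spatial mixing $\mathscr{E}_{\block^{(j)}}(g)$ and $\var_{\mu^{*,j}}(g)$ agree with their $\HS_j$-values up to $O(e^{-cm})$, giving $\lambda_j(m)\le\hat\lambda_j+\epsilon+O(e^{-cm})$. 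The lower direction is the recursive block argument of Martinelli--Olivieri (approximate tensorization of the variance over overlapping sub-blocks, fed by SSM), run as in~\cite{LS1}, with sub-blocks meeting a plus-face compared to the corresponding $\HS_{j'}$. In any case, for $m=\log^3 n$ the rate is comfortably $\le c\log\log n/\log n$.

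For~\eqref{eq-L2-bounds} the upper half is immediate from the excerpt: equation~\eqref{eq-mjt-ubound} (the log-Sobolev $L^2$-bound of Theorem~\ref{thm-l2-sobolev}, with $\log\log(1/\mu^{*,j}_{\min})=O(\log m)=O(\log\log n)$ since $m=\log^3 n$) gives $\lltwo^{(j)}_t(m)\le\exp(-\lambda_j(m)t+O(\sobinf^{-1}\log\log n))$, and substituting $\lambda_j(m)\ge\hat\lambda_j-c\log\log n/\log n$ together with $t\le 2d\sobinf^{-1}\log n$ replaces $\lambda_j(m)t$ by $\hat\lambda_j t-O(\log\log n)$.

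The substance is the lower half. The geometric point I would use is that the inner block $B=B_j$ contains a corner of $\block^{(j)}$ where all $j$ plus-faces meet, at distance $\asymp m$ from the rest of $\partial\block^{(j)}$, so that a fixed-radius neighbourhood of that corner agrees with the matching corner-neighbourhood of $\HS_j$ up to $O(e^{-cm})$ in the measures and up to the finite-speed coupling error of the kind bounded in Lemma~\ref{lem-barrier-couple} — which, for $t=O(\log n)$ and $m=\log^3 n$, is super-polynomially small. I then take $g=g_\epsilon$, a mean-zero local test function for the $\HS_j$-dynamics supported in a fixed ball $R\subset B$ around that corner, normalised to $\|g\|_{L^2(\mu_j)}=1$ with Rayleigh quotient $\le\hat\lambda_j+\epsilon$. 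A Chebyshev estimate on its spectral measure (which puts all but $\sqrt\epsilon$ of its mass within $\sqrt\epsilon$ of its left endpoint) yields $\|P^{\HS_j}_t g\|_{L^2(\mu_j)}^2\ge(1-\sqrt\epsilon)\,e^{-(\hat\lambda_j+O(\sqrt\epsilon))t}$, and the agreement above transfers this to $\|P^{\block^{(j)}}_t g\|_{L^2(\mu^{*,j})}^2$ up to an additive, polynomially small error (say $\le 16n^{-4d/3}$). Finally, since $g$ is a function of $\sigma(B)$, for every $x_0$ one has $\|\P_{x_0}(X^{*,j}_t(B)\in\cdot)-\mu^{*,j}|_B\|_{L^2}\ge (P^{\block^{(j)}}_tg)(x_0)/\|g\|_{L^2(\mu^{*,j})}$, and maximising over $x_0$ — using $\max_{x_0}(P^{\block^{(j)}}_tg(x_0))^2\ge\E_{\mu^{*,j}}[(P^{\block^{(j)}}_tg)^2]=\|P^{\block^{(j)}}_tg\|_{L^2(\mu^{*,j})}^2$ — gives $\lltwo^{(j)}_t(m)\ge \|P^{\block^{(j)}}_tg\|_{L^2(\mu^{*,j})}^2/\|g\|_{L^2(\mu^{*,j})}^2$. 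Chaining these, letting $\epsilon\to0$ slowly with $n$, and inserting $\lambda_j(m)\le\hat\lambda_j+c\log\log n/\log n$ and $t\le 2d\sobinf^{-1}\log n$ produces $\lltwo^{(j)}_t(m)\ge e^{-\hat\lambda_j t-O(\log\log n)}-32n^{-4d/3}$; the $O(\log\log n)$ absorbs the $(1\pm\epsilon)$ and rate corrections, while the additive term also makes the bound trivially valid once $e^{-\hat\lambda_j t}$ falls below it.

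The main obstacle is precisely this last argument — making rigorous that the slowest mode of $\block^{(j)}$ is genuinely visible inside $B_j$ (that $B_j$ contains the slowest local region, the $j$-corner, with room to spare) and that a near-optimal, essentially $L^2$-normalised test function supported there can be moved between $\HS_j$ and $\block^{(j)}$ with an error controlled \emph{uniformly} over the whole admissible window $t\le 2d\sobinf^{-1}\log n$, down to the stated $32n^{-4d/3}$ slack. The gap convergence and the log-Sobolev upper bound are comparatively routine — the former a transcription of~\cite{LS1}, the latter already recorded as~\eqref{eq-mjt-ubound}.
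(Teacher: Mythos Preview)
Your approach differs substantially from the paper's. The paper does \emph{not} identify $\hat\lambda_j$ with the infinite-volume gap $\lambda_j^{(\infty)}$ inside this lemma; that identification is carried out separately afterward. Instead the paper applies its own barrier-dynamics and sparse-support machinery from \S\ref{sec:plusUBound} recursively to the chain $X^{*,j}_t$ on $\block^{(j)}$, embedding components of its update support into copies of $\block^{(j)}$ of a nearby side-length $m'\in[\tfrac45 m,\tfrac65 m]$. This yields the single chain of inequalities
\[
\mathrm{e}^{-\lambda_j(m)(t+s)}\;\le\;2\max_{x_0^*}\|\P_{x_0^*}(X^{*,j}_{t+s}\in\cdot)-\mu^{*,j}\|_\tv\;\le\;2\sqrt{2^d\,\lltwo^{(j)}_t(m')\log^{-7d}n}+4n^{-2d/3}\;\le\;\mathrm{e}^{-\lambda_j(m')t+O(\log\log n)},
\]
which simultaneously (i) gives the lower bound on $\lltwo^{(j)}_t(m')$ (read the first two inequalities, then invert), (ii) shows $|\lambda_j(m)-\lambda_j(m')|=O(m^{-1/3}\log m)$ by taking $t\asymp\log n$, whence $\lambda_j(m)$ is Cauchy and converges with the stated rate, and (iii) yields the upper bound on $\lltwo^{(j)}_t$ via the log-Sobolev inequality as you also note. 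Everything is internal to the paper's framework; no appeal to~\cite{LS1} or to Martinelli--Olivieri block recursion is needed.

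Your route is also viable in outline: the test-function lower bound $\lltwo^{(j)}_t\ge\|P_t g\|_{L^2}^2/\|g\|_{L^2}^2$ together with the spectral-measure Markov estimate is correct, and the finite-speed-of-propagation coupling does let you move $g$ between $\HS_j$ and $\block^{(j)}$ provided you track that $\|g\|_\infty\le 2^{O(R)}$ where $R$ is the support radius, so that $R$ must grow slowly enough (say $R\ll m^{1/2}$) for the $O(\mathrm{e}^{-cm})$ coupling error to absorb it. The weak point is your treatment of the convergence $\lambda_j(m)\to\hat\lambda_j$: citing~\cite{LS1} is not quite honest, since that paper treats the fully periodic torus and the transfer to mixed plus/periodic boundary is exactly the new content here; your sketched Martinelli--Olivieri lower bound would need to be spelled out. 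The paper's recursive argument sidesteps this entirely by comparing $\lambda_j(m)$ only to itself at nearby scales, which is both cleaner and self-contained.
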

\begin{proof}

In the proof of the upper bound in Section~\ref{sec:plusUBound} we could apply the same analysis to the Markov chain $X^{*,j}_t$ on $\block^{(j)}$ but crucially mapping components of the update support into blocks $\block_i$ of side-length $\log^3 n$ (rather than $\log^3 m$).  By construction, all components of the update support will be of the form $\block^{(j)}$.  With $s=100 d^2 \sobinf^{-2}\log \log n$, an adaptation of equation~\eqref{e:upperBoundPlusFinal} gives us that
\begin{equation}\label{e:apdaptedUBoundPlus}
\max_{x_0^*} \left\|\P_{x_0^*}(X^{*,j}_{t+s}\in\cdot)-\mu^{*,j} \right\|_\tv \leq
\sqrt{ 2^d   \lltwo^{(j)}_t \log^{-7d}n} + 2n^{-2d/3}\, ,
\end{equation}
when $t\leq 2d \sobinf^{-1}\log n$.  We can also vary the size of the blocks that the components of the update support are mapped into for instance taking them of side-length $m'\in[\frac45 m, \frac65 m]$ in which case we get the bound
\begin{equation}\label{e:generalEmbedBoundPlus}
\max_{x_0^*} \left\|\P_{x_0^*}(X^{*,j}_{t+s}\in\cdot)-\mu^{*,j} \right\|_\tv \leq
\sqrt{ 2^d   \lltwo^{(j)}_t(m') \log^{-7d}n} + 2n^{-2d/3}\, .
\end{equation}

Let  $\Omega^*_m$ denote the  state space of $X^{*,j}_t$. Since $\log\log (1/\mu^{*,j}(\sigma)) \geq (3d+o(1))\log \log n$ for all $\sigma \in \Omega^*_m$ and since $\lambda_j(m) \leq 1$ (vertices are updated at rate 1), Theorem~\ref{thm-l2-sobolev}
implies that for large $n$
\begin{align}\label{e:generalLTwoBound}
\lltwo^{(j)}_t(m') &\leq  \exp\left( 1-\lambda_j(m')\left(t-\frac1{4 \sobinf}\log\log \left(1/\mu^{*,j}(\sigma)\right)\right)\right) \nonumber\\
&\leq  \mathrm{e}^{-\lambda_j(m') t+\frac{3d+o(1)}{4\sobinf}\log\log n} \, .
\end{align}

Now a standard lower bound on the total variation distance in terms of the spectral gap (cf.\ its discrete-time analogue \cite{LPW}*{equation (12.13)}) gives that for all $t>0$,
\begin{equation}\label{e:gapLBound}
\mathrm{e}^{-\lambda(m) t} \leq 2 \max_{x_0^*} \left \| \P(X^{*,j}_{t} \in\cdot)- \mu^{*,j}\right \|_\tv \,.
\end{equation}

Combining equations~\eqref{e:generalLTwoBound}, \eqref{e:generalEmbedBoundPlus} and~\eqref{e:gapLBound} we have that for any $0 \leq t \leq \frac{3}{5\lambda_j(m')}\log n$, $s=100 d^2 \sobinf^{-2}\log \log n$ and $m'\in[\frac45 m, \frac65 m]$,
\begin{align}\label{e:combinedLambdaBound}
\mathrm{e}^{-\lambda_j(m) (t+s)} &\leq 2 \max_{x_0^*} \left \| \P(X^{*,j}_{t+s} \in\cdot)- \mu^{*,j}\right \|_\tv \nonumber\\
&\leq 2\sqrt{ 2^d   \lltwo^{(j)}_t(m') \log^{-7d}n} + 4n^{-2d/3} \nonumber\\
&\leq  \mathrm{e}^{-\lambda_j(m') t+\frac{3d+o(1)}{4\sobinf}\log\log n} \, .
\end{align}
Since $\lambda_j(m')\leq 1$ for all $m'$ we may take $t=\frac12\log n = \frac12m^{1/3}$ and get that,
\[
 \left[\frac12m^{1/3}+100 d^2 \sobinf^{-2}\log m\right] \lambda_j(m) \geq  \left[\frac12m^{1/3}\right] \lambda_j(m')  - \frac{d+o(1)}{4\sobinf}\log m \, ,
\]
and hence with $c_1=\frac{2d+o(1)}{4\sobinf}+200 d^2 \sobinf^{-2}$
\[
\lambda_j(m) -\lambda_j(m') \geq  - (c_1 +o(1))\frac{\log m}{m^{1/3}} \,.
\]
Rearranging the role of $m$ and $m'$ we have that for all $m'\in[\frac56 m, \frac54 m]$ then
\[
\lambda_j(m) -\lambda_j(m') \leq  (c_1 +o(1))\frac{\log m'}{(m')^{1/3}} \leq (2c_1+o(1))\frac{\log m}{m^{1/3}} \,.
\]
Combining the previous two equations we have that for all $m'\in[\frac56 m, \frac65 m]$
\begin{equation}\label{e:lambdaDifference}
|\lambda_j(m) -\lambda_j(m')| \leq (2c_1+o(1))\frac{\log m}{m^{1/3}} \,.
\end{equation}
This implies that $\lambda_j(m)$ converges to some limit $\hat{\lambda}_j$ as $m\to\infty$ and that
\begin{align}\label{e:lambdaConvergenceBound}
|\lambda_j(m) - \hat{\lambda}_j| &\leq \sum_{k=1}^\infty |\lambda_j((\tfrac65)^{k-1}m) -\lambda_j((\tfrac65)^{k}m)| \nonumber\\
&\leq \sum_{k=1}^\infty (2c_1+o(1))\frac{\log ((\tfrac65)^{k}m)}{(\tfrac65)^{k/3}m^{1/3}}\nonumber\\
&\leq \frac{35 c_1\log m}{m^{1/3}}=\frac{105c_1  \log\log n}{\log n}
\end{align}
for large $m$.

Substituting~\eqref{e:combinedLambdaBound} into~\eqref{e:lambdaConvergenceBound} and using the fact that $x\leq y+z$ implies $2y^2\geq x^2-2z^2$ we have that for a large enough constant $c$ and large $n$,
\begin{align*}
\mathrm{e}^{-2\hat{\lambda}_j t -c\log\log n} - 32n^{-4d/3} \leq \lltwo^{(j)}_t(m') \leq \mathrm{e}^{-2\hat{\lambda}_j t +c\log\log n}\,
\end{align*}
which completes the lemma.
\end{proof}

Lemma~\ref{lem-L2-bounds} implies that
\[
\left|t^\star - \Big(\min_{0\leq j \leq d-1} (d-j)\hat{\lambda}_j^{-1}\Big)\log n\right| \leq c\log \log n\,.
\]
The last step in the proof of Theorem~\ref{t:plusGeneral} will be to show that $\hat{\lambda}_j=\lambda_j^{(\infty)}$.

Let $Y^{(j)}_t$ denote the Glauber dynamic on $\HS_j$.
Consider the function $f_j(\sigma) = \sum_{v\in\HS_j}\frac{\sigma(v)-\E\sigma(v)}{|v|^{d+1}}$ and let
\[
\xi_{t}^{(j)} = \E_+ f_j(Y^{(j)}_t) - \E_- f_j(Y^{(j)}_t) \, .
\]
By the characterization of the spectral gap as the slowest exponential rate of decay to 0 of $\E_{y_0} f(Y_t)$ for mean zero $L^2$ functions under the action of the semi-group we have that
\[
\exp(-\lambda_j^{(\infty)}) \geq \limsup_{t\to\infty} (\xi_t^{(j)})^{1/t} \, .
\]
As before let $X^{*,j}_t$ denote the Glauber dynamics on the block $\block^{(j)}$ of side-length $m=\log^3 n$.  Let
\[
B_m=\left\{(x_1,\ldots,x_d)\in \Z^d:0\leq x_i \leq m/2\right\} \, ,
\]
which we will associate with subsets both of $\block^{(j)}=\Z^d_m$ and $\HS_j$ in the natural way.  By another disagreement percolation argument of the type in Lemma~\ref{lem-barrier-couple} we can couple $Y^{(j)}_t$ and $X^{*,j}_t$ starting from the all plus configuration so that
\begin{equation}\label{e:magnetizationCoupling}
\P_+(Y^{(j)}_{\log n} (B_m)\neq X^{*,j}_{\log n}(B_m)) \leq \exp(-\log^2 n)
\end{equation}
and similarly for all minus initial configurations.  By the standard monotone coupling and symmetry of $\block^{(j)}$
\begin{align}\label{e:monocouplingBound}
\max_{x_0^*} \left\|\P_{x_0^*}(X^{*,j}_{t}\in\cdot)-\mu^{*,j} \right\|_\tv &\leq \sum_{v\in\block^{(j)}} \Big( \E_+ X^{*,j}_{t} -  \E_- X^{*,j}_{t} \Big) \nonumber \\ &\leq 2^d \sum_{v\in B_m} \Big( \E_+ X^{*,j}_{t} -  \E_- X^{*,j}_{t} \Big) \, .
\end{align}
By the coupling we have that of $Y^{(j)}_t$ and $X^{*,j}_t$ it follows that,
\begin{align}\label{e:magnetizationCouplingA}
\sum_{v\in B_m} \Big( \E_+ X^{*,j}_{\log n} -  \E_- X^{*,j}_{\log n} \Big) &\leq \sum_{v\in B_m} \Big( \E_+ Y^{(j)}_{\log n} -  \E_- Y^{(j)}_{\log n} \Big)+ m^3\mathrm{e}^{-\log^2 n} \nonumber\\
&\leq m^{d+1}\xi_{\log n}^{(j)} + m^3\mathrm{e}^{-\log^2 n}.
\end{align}
The standard lower bound on the total variation distance in terms of the spectral gap (cf.\ its discrete-time analogue \cite{LPW}*{equation (12.13)}) gives that,
\begin{equation}\label{e:gapLBoundA}
\mathrm{e}^{-\lambda_j(m) \log n} \leq 2 \max_{x_0^*} \left \| \P(X^{*,j}_{\log n} \in\cdot)- \mu^{*,j}\right \|_\tv \,.
\end{equation}
and so combining equations~\eqref{e:monocouplingBound}, \eqref{e:magnetizationCouplingA} and~\ref{e:gapLBoundA} we have that
\[
\mathrm{e}^{-\hat{\lambda}_j}  \leq \limsup_n \left((2m)^{d+1}\xi_{\log n}^{(j)}\right)^{1/\log n} + \left(m^3\mathrm{e}^{-\log^2 n}\right)^{1/\log n} \leq \mathrm{e}^{-\lambda_j^{(\infty)}} \, ,
\]
and hence $\hat{\lambda}_j\geq \lambda_j^{(\infty)}$.

It remains to prove that $\hat{\lambda}_j\leq \lambda_j^{(\infty)}$. Now fix $\epsilon>0$ and recall the Dirichlet form \eqref{eq-dirichlet-form}, according to which
\[
\lambda_j^{(\infty)} = \inf_{f\in L^2(\{\pm1\}^{\HS_j},\mu_j)}  \frac{\mathscr{E}_{\mu_j}(f,f)}{\var_{\mu_\infty}(f)}\,,
\]
where $\mu_j$ is the stationary measure of the infinite-volume Ising model.
For any $f\in L^2(\{\pm1\}^{\HS_j},\mu_j)$
with $\mathscr{E}_{\mu_j}(f,f)<\infty$ we can find a sequence of functions $f_n\in L^2(\{\pm1\}^{\HS_j},\mu_j)$
each of which depends only on a finite number of spins such that $f_n\to f$ in $L^2(\{\pm1\}^{\HS_j},\mu_j)$ and
$\mathscr{E}_{\mu_j}(f_n,f_n)\to\mathscr{E}_{\mu_j}(f,f)$ (see e.g.\ the proof of \cite{Liggett}*{Lemma 4.3}).
So take $g\in L^2(\{\pm1\}^{\HS_j},\mu_j)$  depending
only on a finite number of spins such that
\[
\frac{\mathscr{E}_{\mu_j}(g,g)}{\var_{\mu_j}(g)} \leq \frac{\mathscr{E}_{\mu_j}(f,f)}{\var_{\mu_j}(f)} +\epsilon\,.
\]
For some large enough $M$ we have that $g$ is a function of the spins in the box $\Delta_M=\{x\in \HS_j: \|x\|_\infty\leq M\}$.
We compare the Ising model on $\HS_j$ and on $\block^{(j)}$ with side-length $m$, identifying the vertices $\Delta_M\subset \HS_j$ with those in $\Delta_M^*=\{x\in \block^{(j)}: x \hbox{ mod } m \in \Delta_M\}$.
By the strong spatial mixing property,
\[
\|\mu_j|_{\Delta_M} - \mu^{*,j}_m|_{\Delta_M^*}\|_{L^\infty} \to 0
\]
as $m\to\infty$ and hence
\[
\frac{\mathscr{E}_{\mu_j}(g,g)}{\var_{\mu_j}(g)} = \lim_{m\to\infty} \frac{\mathscr{E}_{\mu^{*,j}_m}(g,g)}{\var_{\mu^{*,j}_m}(g)} \geq \lim_{m\to\infty} \lambda_j(m) =  \hat{\lambda}_j \,,
\]
where the inequality follows from the characterization of the spectral gap by the Dirichlet form.  This implies that
\[
\lambda_j^{(\infty)} = \inf_f \frac{\mathscr{E}_{\mu_j}(f,f)}{\var_{\mu_j}(f)} \geq \hat{\lambda}_j -\epsilon\,,
\]
and so $\lambda_j^{(\infty)}\geq \hat{\lambda}_j$ which completes the proof of Theorem~\ref{t:plusGeneral}.

\medskip
\section*{Acknowledgments}
We are grateful to Pietro Caputo, Fabio Martinelli, Fabio Toninelli and Yuval Peres for useful discussions.
\medskip

\begin{bibdiv}
\begin{biblist}

\bib{AH}{article}{
   author={Aizenman, M.},
   author={Holley, R.},
   title={Rapid convergence to equilibrium of stochastic Ising models in the Dobrushin Shlosman regime},
   conference={
      title={},
      address={Minneapolis, Minn.},
      date={1984--1985},
   },
   book={
      series={IMA Vol. Math. Appl.},
      volume={8},
      publisher={Springer},
      place={New York},
   },
   date={1987},
   pages={1--11},
}

\bib{Aldous}{article}{
  author = {Aldous, David},
  title = {Random walks on finite groups and rapidly mixing {M}arkov chains},
  booktitle = {Seminar on probability, XVII},
  series = {Lecture Notes in Math.},
  volume = {986},
  pages = {243--297},
  publisher = {Springer},
  address = {Berlin},
  year = {1983},
}

\bib{AF}{book}{
    AUTHOR = {Aldous, David},
    AUTHOR = {Fill, James Allen},
    TITLE =  {Reversible {M}arkov Chains and Random Walks on Graphs},
    note = {In preparation, \texttt{http://www.stat.berkeley.edu/\~{}aldous/RWG/book.html}},
}

\bib{AD}{article}{
  author = {Aldous, David},
  author = {Diaconis, Persi},
  title = {Shuffling cards and stopping times},
  journal = {Amer. Math. Monthly},
  volume = {93},
  pages = {333--348},
  year = {1986},
}

\bib{Cesi}{article}{
   author={Cesi, Filippo},
   title={Quasi-factorization of the entropy and logarithmic Sobolev
   inequalities for Gibbs random fields},
   journal={Probab. Theory Related Fields},
   volume={120},
   date={2001},
   number={4},
   pages={569--584},
}

\bib{CS}{article}{
   author = {Chen, Guan-Yu},
   author = {Saloff-Coste, Laurent},
   title = {The cutoff phenomenon for ergodic Markov processes},
   journal = {Electronic Journal of Probability},
   volume = {13},
   year = {2008},
   pages = {26--78},
}

\bib{Diaconis}{article}{
  author = {Diaconis, Persi},
  title = {The cutoff phenomenon in finite {M}arkov chains},
  journal = {Proc. Nat. Acad. Sci. U.S.A.},
  volume = {93},
  year = {1996},
  number = {4},
  pages = {1659--1664},
}

\bib{DGM}{article}{
   author={Diaconis, Persi},
   author={Graham, R. L.},
   author={Morrison, J. A.},
   title={Asymptotic analysis of a random walk on a hypercube with many
   dimensions},
   journal={Random Structures Algorithms},
   volume={1},
   date={1990},
   number={1},
   pages={51--72},
}

\bib{DS1}{article}{
   author={Diaconis, Persi},
   author={Saloff-Coste, Laurent},
   title={Comparison techniques for random walk on finite groups},
   journal={Ann. Probab.},
   volume={21},
   date={1993},
   number={4},
   pages={2131--2156},
}

\bib{DS2}{article}{
   author={Diaconis, Persi},
   author={Saloff-Coste, Laurent},
   title={Comparison theorems for reversible Markov chains},
   journal={Ann. Appl. Probab.},
   volume={3},
   date={1993},
   number={3},
   pages={696--730},
}

\bib{DS}{article}{
   author={Diaconis, P.},
   author={Saloff-Coste, L.},
   title={Logarithmic Sobolev inequalities for finite Markov chains},
   journal={Ann. Appl. Probab.},
   volume={6},
   date={1996},
   number={3},
   pages={695--750},
}

\bib{DS3}{article}{
   author={Diaconis, P.},
   author={Saloff-Coste, L.},
   title={Nash inequalities for finite Markov chains},
   journal={J. Theoret. Probab.},
   volume={9},
   date={1996},
   number={2},
   pages={459--510},
}

\bib{DiSh}{article}{
  author = {Diaconis, Persi},
  author = {Shahshahani, Mehrdad},
  title = {Generating a random permutation with random transpositions},
  journal = {Z. Wahrsch. Verw. Gebiete},
  volume = {57},
  year = {1981},
  number = {2},
  pages = {159--179},
}

\bib{DiSh2}{article}{
   author={Diaconis, Persi},
   author={Shahshahani, Mehrdad},
   title={Time to reach stationarity in the Bernoulli-Laplace diffusion
   model},
   journal={SIAM J. Math. Anal.},
   volume={18},
   date={1987},
   number={1},
   pages={208--218},
}

\bib{DLP}{article}{
   author={Ding, Jian},
   author={Lubetzky, Eyal},
   author={Peres, Yuval},
   title={The mixing time evolution of Glauber dynamics for the mean-field Ising model},
   journal={Comm. Math. Phys.},
   volume={289},
   date={2009},
   number={2},
   pages={725--764},
}

\bib{DLP2}{article}{
  author = {Ding, Jian},
  author = {Lubetzky, Eyal},
  author = {Peres, Yuval},
  title = {Total-variation cutoff in birth-and-death chains},
  journal = {Probab. Theory Related Fields},
  volume = {146},
  year = {2010},
  number = {1},
  pages = {61--85},
}

\bib{DobShl}{article}{
   author={Dobrushin, R. L.},
   author={Shlosman, S. B.},
   title={Completely analytical interactions: constructive description},
   journal={J. Statist. Phys.},
   volume={46},
   date={1987},
   number={5-6},
   pages={983--1014},
}

\bib{DSVW}{article}{	
  author={Dyer, Martin},
   author={Sinclair, Alistair},
   author={Vigoda, Eric},
   author={Weitz, Dror},
   title={Mixing in time and space for lattice spin systems: a combinatorial
   view},
   journal={Random Structures Algorithms},
   volume={24},
   date={2004},
   number={4},
   pages={461--479},
}



\bib{GZ}{article}{
   author={Guionnet, Alice},
   author={Zegarlinski, Boguslaw},
   title={Decay to equilibrium in random spin systems on a lattice},
   journal={Comm. Math. Phys.},
   volume={181},
   date={1996},
   number={3},
   pages={703--732},
}

\bib{HaSi}{article}{
  title={A general lower bound for mixing of single-site dynamics on graphs},
  author={Hayes, T. P.},
  author={Sinclair, A.},
  booktitle={Foundations of Computer Science, 2005. FOCS 2005. 46th Annual IEEE Symposium on},
  pages={511--520},
  year={2005},
}

\bib{Holley}{article}{
   author={Holley, Richard A.},
   title={On the asymptotics of the spin-spin autocorrelation function in stochastic Ising models near the critical temperature},
   conference={
      title={Spatial stochastic processes},
   },
   book={
      series={Progr. Probab.},
      volume={19},
      publisher={Birkh\"auser Boston},
      place={Boston, MA},
   },
   date={1991},
   pages={89--104},
}


\bib{HoSt1}{article}{
   author={Holley, Richard A.},
   author={Stroock, Daniel W.},
   title={Logarithmic Sobolev inequalities and stochastic Ising models},
   journal={J. Statist. Phys.},
   volume={46},
   date={1987},
   number={5-6},
   pages={1159--1194},
}

\bib{HoSt2}{article}{
   author={Holley, Richard A.},
   author={Stroock, Daniel W.},
   title={Uniform and $L\sp 2$ convergence in one-dimensional stochastic
   Ising models},
   journal={Comm. Math. Phys.},
   volume={123},
   date={1989},
   number={1},
   pages={85--93},
}


\bib{HuberThesis}{thesis}{
  title={Perfect sampling using bounding chains},
  author={Huber, M.L.},
  year={1999},
  school={Doctoral dissertation, Cornell University}
}

\bib{Huber}{article}{
   author={Huber, Mark},
   title={Perfect sampling using bounding chains},
   journal={Ann. Appl. Probab.},
   volume={14},
   date={2004},
   pages={734--753},
}

\bib{LLP}{article}{
  title   = {Glauber dynamics for the Mean-field Ising Model: cut-off, critical power law, and metastability},
  author  = {Levin, David A.},
  author = {Luczak, Malwina},
  author = {Peres, Yuval},
  journal={Probab. Theory Related Fields},
   volume={146},
   date={2010},
   number={1-2},
   pages={223--265},
}

\bib{LPW}{book}{
  title={{Markov chains and mixing times}},
  author={Levin, D.A.},
  author={Peres, Y.},
  author={Wilmer, E.L.},
  journal={American Mathematical Society},
  year={2008},
}

\bib{Liggett}{book}{
   author={Liggett, Thomas M.},
   title={Interacting particle systems},
   series={Classics in Mathematics},
   note={Reprint of the 1985 original},
   publisher={Springer-Verlag},
   place={Berlin},
   date={2005},
   pages={xvi+496},
}

\bib{LY}{article}{
   author={Lu, Sheng Lin},
   author={Yau, Horng-Tzer},
   title={Spectral gap and logarithmic Sobolev inequality for Kawasaki and
   Glauber dynamics},
   journal={Comm. Math. Phys.},
   volume={156},
   date={1993},
   number={2},
   pages={399--433},
}

\bib{LS1}{article}{
    author = {Lubetzky, Eyal},
    author = {Sly, Allan},
    title = {Cutoff for the Ising model on the lattice},
    journal = {Inventiones Mathematicae},
    status = {to appear},
}

\bib{LScritical}{article}{
    author = {Lubetzky, Eyal},
    author = {Sly, Allan},
    title = {Critical Ising on the square lattice mixes in polynomial time},
    journal={Comm. Math. Phys.},
    status = {to appear},
}

\bib{LS2}{article}{
   author = {Lubetzky, Eyal},
   author = {Sly, Allan},
   title={Cutoff phenomena for random walks on random regular graphs},
   journal={Duke Math. J.},
   volume={153},
   date={2010},
   number={3},
   pages={475--510},
}

\bib{LV}{article}{
   author={Luby, Michael},
   author={Vigoda, Eric},
   title={Fast convergence of the Glauber dynamics for sampling independent
   sets},
   note={Statistical physics methods in discrete probability, combinatorics,
   and theoretical computer science (Princeton, NJ, 1997)},
   journal={Random Structures Algorithms},
   volume={15},
   date={1999},
   number={3-4},
   pages={229--241},
}

\bib{Martinelli97}{article}{
   author={Martinelli, Fabio},
   title={Lectures on Glauber dynamics for discrete spin models},
   conference={
      title={Lectures on probability theory and statistics},
      address={Saint-Flour},
      date={1997},
   },
   book={
      series={Lecture Notes in Math.},
      volume={1717},
      publisher={Springer},
      place={Berlin},
   },
   date={1999},
   pages={93--191},
}

\bib{Martinelli04}{article}{
   author={Martinelli, Fabio},
   title={Relaxation times of Markov chains in statistical mechanics and
   combinatorial structures},
   conference={
      title={Probability on discrete structures},
   },
   book={
      series={Encyclopaedia Math. Sci.},
      volume={110},
      publisher={Springer},
      place={Berlin},
   },
   date={2004},
   pages={175--262},
}

\bib{MO}{article}{
   author={Martinelli, F.},
   author={Olivieri, E.},
   title={Approach to equilibrium of Glauber dynamics in the one phase
   region. I. The attractive case},
   journal={Comm. Math. Phys.},
   volume={161},
   date={1994},
   number={3},
   pages={447--486},
}

\bib{MO2}{article}{
   author={Martinelli, F.},
   author={Olivieri, E.},
   title={Approach to equilibrium of Glauber dynamics in the one phase
   region. II. The general case},
   journal={Comm. Math. Phys.},
   volume={161},
   date={1994},
   number={3},
   pages={487--514},
}

\bib{MOS}{article}{
   author={Martinelli, F.},
   author={Olivieri, E.},
   author={Schonmann, R. H.},
   title={For $2$-D lattice spin systems weak mixing implies strong mixing},
   journal={Comm. Math. Phys.},
   volume={165},
   date={1994},
   number={1},
   pages={33--47},
}
		
\bib{SaloffCoste}{article}{
   author={Saloff-Coste, Laurent},
   title={Lectures on finite Markov chains},
   conference={
      title={Lectures on probability theory and statistics},
      address={Saint-Flour},
      date={1996},
   },
   book={
      series={Lecture Notes in Math.},
      volume={1665},
      publisher={Springer},
      place={Berlin},
   },
   date={1997},
   pages={301--413},
}

\bib{SaloffCoste2}{article}{
  author = {Saloff-Coste, Laurent},
  title = {Random walks on finite groups},
  booktitle = {Probability on discrete structures},
  series = {Encyclopaedia Math. Sci.},
  volume = {110},
  pages = {263--346},
  publisher = {Springer},
  address = {Berlin},
  year = {2004},
}

\bib{SZ1}{article}{
   author={Stroock, Daniel W.},
   author={Zegarli{\'n}ski, Bogus{\l}aw},
   title={The equivalence of the logarithmic Sobolev inequality and the Dobrushin-Shlosman mixing condition},
   journal={Comm. Math. Phys.},
   volume={144},
   date={1992},
   number={2},
   pages={303--323},
}

\bib{SZ2}{article}{
   author={Stroock, Daniel W.},
   author={Zegarli{\'n}ski, Bogus{\l}aw},
   title={The logarithmic Sobolev inequality for continuous spin systems on a lattice},
   journal={J. Funct. Anal.},
   volume={104},
   date={1992},
   number={2},
   pages={299--326},
}

\bib{SZ3}{article}{
   author={Stroock, Daniel W.},
   author={Zegarli{\'n}ski, Bogus{\l}aw},
   title={The logarithmic Sobolev inequality for discrete spin systems on a lattice},
   journal={Comm. Math. Phys.},
   volume={149},
   date={1992},
   number={1},
   pages={175--193},
}

\bib{Vigoda}{article}{
   author={Vigoda, Eric},
   title={A note on the Glauber dynamics for sampling independent sets},
   journal={Electron. J. Combin.},
   volume={8},
   date={2001},
   number={1},
   pages={Research Paper 8, 8 pp. (electronic)},
}

\bib{Weitz}{article}{
   author={Weitz, Dror},
   title={Counting independent sets up to the tree threshold},
   conference={
      title={STOC'06: Proceedings of the 38th Annual ACM Symposium on Theory
      of Computing},
   },
   book={
      publisher={ACM},
      place={New York},
   },
   date={2006},
   pages={140--149},
}

\bib{Zee1}{article}{
   author={Zegarli{\'n}ski, Bogus{\l}aw},
   title={Dobrushin uniqueness theorem and logarithmic Sobolev inequalities},
   journal={J. Funct. Anal.},
   volume={105},
   date={1992},
   number={1},
   pages={77--111},
}

\bib{Zee2}{article}{
   author={Zegarli{\'n}ski, Bogus{\l}aw},
   title={On log-Sobolev inequalities for infinite lattice systems},
   journal={Lett. Math. Phys.},
   volume={20},
   date={1990},
   number={3},
   pages={173--182},
}

\end{biblist}
\end{bibdiv}

\end{document}